\newtheorem{assumption}{Assumption}
\newtheorem{theorem}{Theorem}
\newtheorem{lemma}[theorem]{Lemma}
\newtheorem{corollary}[theorem]{Corollary}
\newtheorem{proposition}[theorem]{Proposition}
\newtheorem{example}{Example}
\newcommand{\XX}{{\cal X}}
\newcommand{\RR}{\mathbf R}
\newcommand{\KK}{{\cal K}}
\newcommand{\proof}{{\sc Proof. }}
\newcommand{\cE}{{\cal E}}
\newcommand{\cA}{{\cal A}}
\newcommand{\cL}{{\cal L}}
\newcommand{\cX}{\mathcal{X}}
\newcommand{\Rmnum}[1]{\uppercase\expandafter{\romannumeral #1}} 
\DeclareMathOperator*{\argmin}{argmin}
\begin{document}

\title{Complexity Analysis of Convex Majorization Schemes \\ for Nonconvex Constrained Optimization}

\author{Nuozhou Wang\thanks{Department of Industrial \& Systems Engineering, University of Minnesota
  (\texttt{wang9886@umn.edu}).}
\and Junyu Zhang\thanks{Department of Industrial Systems Engineering \& Management, National University of Singapore
  (\texttt{junyuz@nus.edu.sg}). }
\and Shuzhong Zhang\thanks{Department of Industrial \& Systems Engineering, University of Minnesota
  (\texttt{zhangs@umn.edu}).}
}

\date{\today}

\maketitle

\abstract{In this paper, we introduce and study various algorithms for solving nonconvex minimization with inequality constraints, based on the construction of convex surrogate envelopes that majorize the objective and the constraints. In the case where the objective and constraint functions are gradient H\"{o}lderian continuous, the surrogate functions can be readily constructed and the solution method can be efficiently implemented. The surrogate envelopes are extended to the settings where the second-order information is available, and the convex subproblems are further represented by Dikin ellipsoids using the self-concordance of the convex surrogate constraints. Iteration complexities have been developed for both convex and nonconvex optimization models. The numerical results show promising potential of the proposed approaches.}

%%================================%%
%% Sample for structured abstract %%
%%================================%%

%\keywords{nonconvex constraint, H\"{o}derian continuity, convex surrogates, complexity}

%%\pacs[JEL Classification]{D8, H51}

%%\pacs[MSC Classification]{35A01, 65L10, 65L12, 65L20, 65L70}

\maketitle

\section{Introduction}
In this paper, we consider a generic constrained continuous optimization model
\[
\begin{array}{lll}
	(P) & \min & F(x)=f(x)+r(x)\\
	& \mbox{\rm s.t.} & c_{i}(x)\leq 0,\, i=1,2,...,m \\
	& & x \in \XX
\end{array}
\]
where $\XX$ is a closed convex subset of $\RR^n$, the functions $f(x)$ and $c_i(x)$'s are smooth but possibly nonconvex, and the function $r(x)$ is a convex and lower semicontinuous function but possibly nonsmooth. Known as nonlinear programming, there is a long history and a large body of literature for the problem $(P)$, whose modeling power is certainly beyond doubt. As an example, computing the stability number for a given graph is a copositive program, see \cite{D2010} for a survey, which can be posed as $(P)$. Moreover, Burer \cite{burer2009copositive} showed that many difficult combinatorial optimization problems such as quadratic assignment problems can also be posed as copositive programming, such as: 
\[
\begin{array}{lll}
	& \min & \mathrm{Tr}(Y^\top C Y) \\
	& \mbox{s.t.} & \mathrm{Tr}(Y^\top \!A_iY)   \le b_i, \, i=1,2,...,m \\
	&  & Y \ge 0. 
\end{array}
\]
As a test instance of this paper, the above nonconvex formulation of copositive programming is a special case of $(P)$, where both the objective function and constraints can be nonconvex. Its success depends critically on the ability to find high quality (approximate) feasible solutions on an empirical basis, even though the globally optimal solutions are out of reach in the worst cases.

The difficulties notwithstanding, there has been a considerable amount of research attention on the nonconvex optimization model $(P)$.  One classic approach is to leverage on the augmented Lagrangian method (ALM) and its variants \cite[etc.]{bertsekas1997nonlinear,nocedal1999numerical,bertsekas2014constrained,grapiglia2021complexity,ZZZ24}, one may refer to \cite{deng2025augmented} for a more recent and complete survey. However, a typical issue of the ALM-based approach is that, when the nonconvex subproblems are only solved to first-order stationarity, one cannot rule out the possibility of converging to some stationary points of a certain violation penalty function that are in general infeasible to the original problem; see e.g.\ \cite[Corollary 3.7]{de2023constrained}. To circumvent the infeasibility issue, which can occur regardless of the constraint qualifications, one common remedy is to additionally assume certain generalized quadratic gradient-dominance condition on the quadratic penalty function, which directly implies the feasibility of the stationary points. For example, by assuming a global Polyak-Lojasiewicz type condition, \cite{sahin2019inexact} established an $\mathcal{O}(\epsilon^{-4})$ complexity of an inexact ALM algorithm for problem $(P)$ with general nonconvex functional constraints. Given a similar regularity condition, \cite{li2021rate} derived an improved complexity of $\mathcal{O}(\epsilon^{-3})$, while considering a simpler problem setting that only has functional equality constraints. 

Apart from making strong regularity conditions, the penalty approach is a popular alternative; see e.g.\ \cite{cartis2011evaluation,cartis2014complexity}. For example, \cite{lin2022complexity} considers an inexact proximal point penalty approach. Assuming the boundedness of the objective and constraint functions, they are able to preserve the strong convexity of the subproblem sequence. By letting the penalty parameter to increase to $+\infty$, 
the constraint violation becomes controllable. Then several iteration complexities ranging from $\mathcal{O}(\epsilon^{-2.5})$ to $\mathcal{O}(\epsilon^{-4})$ were established under different settings. However, without driving the penalty parameter to $+\infty$, a general penalty approach does not necessarily rule out the possibility of converging to an infeasible solution either, unless some additional assumptions on the iterative sequence are made a priori. 

Independent of the ALM or penalty approaches, in \cite{boob2023stochastic} the authors developed a Constraint Extrapolation (ConEx) method for stochastic optimization with functional constraints. In each iteration, ConEx solves a quadratically regularized proximal point subproblem (without linearizing the objective and constraint functions), under a certain MFCQ-type assumption, they showed that the iteration complexity of their algorithm is $O(1/\epsilon)$ for the convex case and $O(1/\epsilon^2)$ for the nonconvex case. As a consequence of stochasticity, the authors did not discuss the feasibility of the iterates.  

\textcolor{black}{If one insists on feasibility guarantees, there is a line of research that requires the constraints to be satisfied at every iteration. A classical method of this type is the moving balls approximation method \cite{auslender2010moving}. Under Lipschitz continuity of the gradients, each nonlinear constraint is replaced by a ball-shape inner approximation. Given appropriate constraint qualifications, asymptotic (subsequence) convergence to KKT points are also established in \cite{auslender2010moving}. For the nonconvex case, if in addition that the objective and constraint functions are tame functions, the full sequence converges to some KKT point with linear or sublinear local rate can be established for moving ball methods by KL-type analyses \cite{bolte2016majorization}. For the convex case, an $O(1/\epsilon)$ iteration complexity of the moving ball method is established by \cite{bolte2017multiproximal} under appropriate Lipschitz smoothness conditions. More recently, the feasibility requirement has received renewed attention under the name of safe optimization, motivated by applications such as  }  power system \cite{guo2023safe,guo2023safenon,guo2023safeLP}, constrained reinforcement learning \cite{chow2019lyapunov,zhao2023state}, and safe Bayesian optimization \cite{fiedler2024safety}, 
where safe interactions/experiments, which naturally translate to feasible iterates,  with the environment is favorable. In particular, our main formulation $(P)$ is considered in \cite{guo2023safe,guo2023safenon,guo2023safeLP}, among which  \cite{guo2023safeLP} solves linear programs (LP) at each iteration to get a safe solution while an iteration complexity analysis is missing; \cite{guo2023safe,guo2023safenon} solve a sequence of quadratically constrained quadratic programming (QCQP) subproblems. Under the gradient Lipschitz conditions for both objective and constraint functions, zeroth-order algorithms and their sample complexities have been discussed in the two papers, for convex and nonconvex problems, respectively. Under similar ideas, \cite{nutalapati2019constrained}  proposed to use general surrogate functions in the subproblems to ensure the feasibility of the iterates while tackling stochasticity (sampling from finite-sum format) using variance reduction techniques. Neither the convergence rate nor the sample complexity was discussed in this paper.

It is worth mentioning that in the aforementioned works, finite-step and non-asymptotic complexity results for the nonconvex model $(P)$ are mostly restricted to the settings that assume the objective and constraint functions to have gradient Lipschitz continuity property. In this paper, we attempt to break away from such an assumption, treating it merely as a starting point for our new approach. Specifically, we shall study scenarios where neither convexity nor gradient Lipschitz properties hold, as long as a convex envelope is available. As an example, in linear regression estimation based on $\ell_p$-loss functions where $1<p<2$, the gradient Lipschitz property fails to hold; see e.g.\ \cite{BGLSS21} for a discussion on why the case for $1<p<2$ may be statistically significant. The regression model with an $\ell_p$ loss function ($1<p<2$) corresponds to the gradient H\"olderian continuous case to be studied in Section~\ref{holderian_case}. In general, a combination of non-convexity and non-Lipschitz gradient properties poses a challenge to most existing solution methods.  

\vspace{0.2cm}

\textbf{\large Our contribution and organization.}
In Section \ref{GeneralSchemes}, we propose a generalized framework called Convex Majorization Minimization Approach (CMMA) to solve the general problem $(P)$ with nonconvex objective function and nonconvex functional constraints. It works with the available convex envelopes of the objective and the constraint functions that are constructed depending on the structure of the problems at hand. We shall mention upfront that our approach mainly differs from most other approaches in that we do not rely solely on the gradient Lipschitz continuity of the objective and constraint functions though the latter could be helpful, and that we insist on having a feasible solution as output. Hence, CMMA also belongs to the safe optimization algorithms. Under gradient H\"olderian assumption and appropriate regularity conditions, roughly, an $\mathcal{O}(\epsilon^{-(1+ {1}/{\kappa})})$ iteration complexity for finding $\epsilon$-approximate KKT points has been established, where $\kappa$ refers to the H\"older exponent. We would like to highlight that the regularity condition of this paper (Assumption \ref{assum:upper}) is a very mild condition, and it can be implied by a number of different alternative conditions discussed in Section \ref{GeneralSchemes} and \ref{Surrogate_Construction}.  Then in Section \ref{holderian_case}, if in addition that the problems possess convexity, an improved $\mathcal{O}(\epsilon^{-1/\kappa})$ complexity for finding an $\epsilon$-approximate optimal solution can be obtained for the GHMA, an instantiation of the CMMA with convex problems. Faster linear convergence has also been provided given strong convexity.

Note that the above iteration complexities of CMMA and GHMA are based on the access to efficient subproblem oracles. In Section \ref{Solving_Subproblem}, we develop an efficient primal-dual method for solving the typical subproblems under the Lipschitz or H\"olderian majorization construction. In case the number of constraints is too large and solving the original subproblem remains expensive, Section \ref{Barrier_Dikin} provides two lightweight alternatives: (i) the CEB method that penalizes the surrogate constraints by self-concordant barriers; (ii) the CEAS method that approximates many (quadratic) surrogate constraints by one single Dikin ellipsoid constraint. As a trade-off for simpler subproblems, CEB and CEAS exhibit weaker worst-case iteration complexity guarantees.  

Finally, we provide a brief discussion on a potential second-order extension of our method in Section \ref{Second_Order_Extension}, and we present the numerical experiments in Section \ref{numerical}.

\vspace{0.2cm}

\textbf{\large Related works.}  In this paragraph, we would like to discuss a couple of closely related works to our paper. In \cite[Chapter 7]{CP2022}, a so-called \textit{method by surrogation} is studied, under which the CMMA framework proposed in this paper also belongs. The authors focus on overcoming difficulties arising from general nonsmoothness while providing limited asymptotic convergence analysis with no explicit rates. Our focus, however, is for problems with limited smoothness and we aim at provable computational complexities. \textcolor{black}{For the same reason, our work also differs from the current results on moving ball methods \cite[etc.]{auslender2010moving,bolte2016majorization,bolte2017multiproximal} as they only corresponds to a special case of CMMA, while we provide finite step convergence guarantees beyond convexity and Lipschitz smoothness.}
In \cite{nutalapati2019constrained}, the authors proposed to use surrogate functions in the subproblems to ensure the feasibility of the iterates while tackling stochasticity (sampling from finite-sum format) using variance reduction techniques. Their algorithm shares a similar algorithmic form of CMMA while differing in the regularity conditions and analytic requirements on the surrogates. They showed convergence of their method while an analysis of convergence rate is missing. In \cite{NN2024}, the authors considered high-order Taylor expansions as surrogates which are nonconvex in general, and studied convergence of this scheme under various assumptions, while the focus of the current paper is on using convex surrogate subproblems. 

\vspace{0.2cm}

\textbf{\large Notation.}  For any integer $m\geq1$, we use $[m]$ to denote the set of integers $\{1,\cdots,m\}$. For any matrix $X$, we use $\sigma_{\min}(X)$ and $\sigma_{\max}(X)$ to denote the minimum and maximum singular values of $X$, respectively. For any matrix $X$, we default $\|X\|$ its spectral norm. For any  vector $x$, we use $\|x\|$ to default the standard $\ell_2$-norm, and we use $\|x\|_\infty$ to denote the $\ell_\infty$-norm. Given a set $\mathcal{X}\subseteq\RR^n$ and a point $x\in\RR^n$, we use $\mathrm{dist}(x,\mathcal{X}):=\min_{z\in\mathcal{X}}\|z-x\|$ to denote the distance between $x$ and $\mathcal{X}$, and we use $\mathrm{diam}(\mathcal{X}):=\max_{y,z\in\mathcal{X}}\|y-z\|$ to denote the diameter of the set $\mathcal{X}$.

\section{ General Solution Scheme Via Convex Surrogate } \label{GeneralSchemes}
In this section, assuming the access to the upper surrogates of the objective and constraint functions that satisfy Assumption~\ref{assumption:surrogate}, we introduce the general framework of this paper and establish its oracle complexity based on how many (convex) subproblems are solved. Typically, we will assume by default that the structure of the nonsmooth term $r$ is simple enough so that the subproblems can be efficiently solved. The construction of qualified surrogates is discussed in the next section under several different context. 

\begin{assumption} \label{assumption:surrogate}
	For problem $(P)$, given any feasible $y$, there exist convex and continuously differentiable surrogate functions $\tilde{f}(\cdot\mid y)$ and $\tilde{c_i}(\cdot\mid y)$ 
	such that for any $x$ it holds that 
	\[
	\tilde{f}(x\mid y)\geq f(x), \quad \tilde{c_i}(x\mid y)\geq c_i(x), \quad i\in[m],
	\] 
	where the equality is attained 
	if $x=y$. In addition, we assume $\nabla \tilde{f}(\cdot\mid y)$ and $\nabla \tilde{c_i}(\cdot\mid y), i\in[m]$ to satisfy the following H\"{o}lderian continuous property: 
	\[
	\begin{array}{l}
		\|\nabla\tilde{f}(x\mid y)-\nabla f(x)\|\le 2L\|x-y\|^{\kappa} , \quad \|\nabla\tilde{c_i}(x\mid y)-\nabla c_i(x)\|\le 2L_i\|x-y\|^{\kappa_i}, \quad i\in[m],
	\end{array}
	\]
	for some exponents $\kappa,\kappa_i\in(0,1]$ and constants $L,L_i>0$, $i\in {[m]}$.
	We also assume there exist $N_i>0,i\in[m]$ s.t. $\|\nabla\tilde{c_i}(x\mid y)\|\le N_i, i\in[m]$ for all $x,y$ that are feasible to $(P)$. 
\end{assumption}

In particular, we assume the constants $\kappa,\kappa_i\in(0,1]$ and $L,L_i>0$, $i\in[m]$, are known constants to users. When these constants are  unknown, efficient heuristics are required to estimate them as the input to the algorithm, or some adaptive procedures without such knowledge need to be developed, which are interesting future research topics. Besides, we assume Slater's condition for problem $(P)$ in our setting.  
\begin{assumption} \label{assumption:Slater}
	An optimal solution exists for $(P)$, and there is a strictly feasible solution $\hat{x}\in\XX$ s.t. $c_i(\hat{x}) < 0$, $i\in[m]$. We also assume the problem to have bounded level set. 
\end{assumption}

Based on Assumptions \ref{assumption:surrogate} and \ref{assumption:Slater}, the following convex majorization minimization approach naturally arises as a generic algorithmic framework.  \vspace{0.6cm}

\shadowbox{\begin{minipage}{4.8in}
		{\bf Algorithm CMMA (Convex Majorization Minimization Approach)}
		
		\begin{description}
			
			\item[Step 0:] Choose $x^0$ to satisfy $c_i(x^0) < 0$, $i\in [m]$, and let $k:=0$.
			
			\item[Step 1:] Solve the following subproblem with $x = x^k$ 
			\[
			\begin{array}{lll}
				(P_{x}) \quad z^*(x):=& \argmin_{z} & \tilde{f}(z\mid x) + \frac{L}{\kappa+1}\|z-x\|^{\kappa+1}+r(z) \qquad\qquad\qquad\quad\\
				& \,\,\,\,\,\mbox{ s.t.}  & \tilde{c_i}(z\mid x)\le 0 ,\,\, i\in[m],\\
				& & z \in \XX
			\end{array}
			\]
			and update the next iterate as $x^{k+1} = z^*(x^k)$.
			
			\item[Step 2:] If $x^{k+1}=x^k$ then stop. Otherwise, let $k:=k+1$, and go to {\bf Step 1}.
			
		\end{description}
		
\end{minipage}}

\subsection{Iteration Complexity of Algorithm CMMA}
Denote $d^k=x^{k+1}-x^k$ and $\Delta=F(x^0)-F(x^*)$, where $x^*$ is an optimal solution of $(P)$. According to Assumption \ref{assumption:Slater}, we have $\Delta\geq 0$. We observe that 
\begin{eqnarray*}
	F(x^{k+1}) &\leq& \tilde{f}(x^{k+1}\,|\,x^k)+r(x^{k+1}) \\
	&\leq &\tilde{f}(x^{k}\,|\, x^k)+r(x^{k})-\frac{L\|d^k\|^{\kappa+1}}{\kappa+1} \\
	&=& F(x^k)-\frac{L\|d^k\|^{\kappa+1}}{\kappa+1},
\end{eqnarray*}
where the first inequality is according to Assumption \ref{assumption:surrogate} and the second inequality is because $x^{k+1}$ optimizes the subproblem $(P_{x^k})$. Telescoping the above inequalities gives 
$$\sum_{k=0}^{K-1}\frac{L}{\kappa+1}\|d^k\|^{\kappa+1}\leq F(x^0)-F^*=\Delta, $$
which further implies that 
\begin{equation}
	\label{eqn:dkmin}
	\min_{0\leq k\leq K-1} \|d^k\|\leq \left(\frac{(\kappa+1)\Delta}{LK}\right)^{\frac{1}{\kappa+1}}.
\end{equation}
That is, the algorithmic residual $\|d^k\|\to0$ at an $O(k^{-\frac{1}{\kappa+1}})$ sublinear rate. To connect the convergence rate of $\|d^k\|$ with the convergence rate to a KKT point, we introduce the following assumption, where we denote 
${\rm Lev}(x^0):=\{x\in\XX:F(x)\leq F(x^0), c_i(x)\leq 0, i\in[m]\}$ the level set of any feasible solution $x^0$ to problem $(P)$. 
\begin{assumption}  
	\label{assum:upper}
	There exist $B,\varrho>0$ such that for any subproblem $(P_x)$ with $x\in{\rm Lev}(x^0)$ being strictly feasible, then an associated Lagrangian multiplier $\lambda\in\RR^m_+$ exists which satisfies $\|\lambda\|_\infty\leq B$ as long as $\|z^*(x)-x\|\leq \varrho$. (Note: $z^*(x)$ is defined in Step 1 of Algorithm CMMA.)
\end{assumption}

In the context of Algorithm CMMA, this assumption essentially guarantees the uniform boundedness of the Lagrangian multipliers associated with the subproblems, given that the algorithm is close to being stationary, that is, $\|x^{k+1}-x^{k}\|\leq\varrho$ is sufficiently small for some threshold $\varrho>0$. It rules out the degenerate situations where the multipliers explode. Remark that this type of bounded multiplier assumption is quite standard in sequential-subproblem frameworks. Though, unlike Assumptions \ref{assumption:surrogate} and \ref{assumption:Slater}, Assumption~\ref{assum:upper} cannot be verified {\it a priori}. However, it can be implied by a number of other commonly used alternatives, as shall be illustrated in sections~\ref{subsec:about_dual-boundedness}, \ref{subsection:Holder-NCVX}, and \ref{holderian_case}. 
Denoting the Lagrangian function of $(P)$ as 
$\cL(x,\lambda):= f(x) + r(x) + \sum_{i=1}^m\lambda_i c_i(x)$, we present the following theorem to characterize the convergence rate of CMMA to approximate KKT points of $(P)$. 
\begin{theorem} 
	\label{iter-compl}
	Suppose Assumptions \ref{assumption:surrogate}, \ref{assumption:Slater}, and \ref{assum:upper} hold, and we run CMMA for $K\geq \frac{(\kappa+1)\Delta}{L\varrho^{\kappa+1}}$ iterations. 
	Then, for some $0\leq k_*\leq K-1$, with $\lambda\in\RR^m_+$ being the Lagrangian multiplier of $(P_{x^{k_*}})$, we have 
	\begin{eqnarray*}
		{\rm dist}\Big(0,\partial \cL (x^{k_*+1},\lambda)+{\cal N}_{\XX}(x^{k_*+1})\Big) \leq  O\Big(K^{-\kappa_{\min}/(\kappa+1)}\Big),
	\end{eqnarray*}
	where $\kappa_{\min} = \min\{\kappa,\kappa_1,\cdots,\kappa_m\}$. For any $i\in[m]$, if $\lambda_i>0$,  then 
	$$0\leq -c_i(x^{k_*+1}) \leq O\big(K^{-1/(\kappa+1)}\big).$$
\end{theorem}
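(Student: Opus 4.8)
The plan is to single out a good iterate, read off the KKT system of its convex subproblem, and then convert that into an approximate stationarity certificate for $\cL$ by charging the gap between surrogate and true gradients to the H\"olderian bounds of Assumption~\ref{assumption:surrogate}.

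First I would take $k_*$ to be the index attaining the minimum in \eqref{eqn:dkmin}, so that $\|d^{k_*}\|=\|x^{k_*+1}-x^{k_*}\|\le\big((\kappa+1)\Delta/(LK)\big)^{1/(\kappa+1)}$; the running condition $K\ge(\kappa+1)\Delta/(L\varrho^{\kappa+1})$ then forces $\|d^{k_*}\|\le\varrho$. The descent inequality preceding \eqref{eqn:dkmin} keeps every iterate in ${\rm Lev}(x^0)$, and the iterates stay strictly feasible (the delicate prerequisite flagged below), so $x^{k_*}$ is itself a Slater point of $(P_{x^{k_*}})$ because $\tilde{c_i}(x^{k_*}\mid x^{k_*})=c_i(x^{k_*})<0$. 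Hence Assumption~\ref{assum:upper} is applicable and yields a multiplier $\lambda\in\RR^m_+$ of $(P_{x^{k_*}})$ with $\|\lambda\|_\infty\le B$. (If $d^{k_*}=0$, the algorithm has hit a fixed point that is an exact KKT pair and the bounds hold trivially, so I assume $d^{k_*}\neq0$.)

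Next I would write the convex KKT conditions of $(P_{x^{k_*}})$ at its minimizer $x^{k_*+1}$: there are $\xi\in\partial r(x^{k_*+1})$ and $\nu\in{\cal N}_{\XX}(x^{k_*+1})$ with
\[
0=\nabla\tilde{f}(x^{k_*+1}\mid x^{k_*})+L\|d^{k_*}\|^{\kappa-1}d^{k_*}+\xi+\sum_{i=1}^m\lambda_i\nabla\tilde{c_i}(x^{k_*+1}\mid x^{k_*})+\nu,
\]
together with complementary slackness $\lambda_i\,\tilde{c_i}(x^{k_*+1}\mid x^{k_*})=0$. Solving for $\xi+\nu$ and inserting the true gradients shows that
\[
g:=\nabla f(x^{k_*+1})+\xi+\sum_{i=1}^m\lambda_i\nabla c_i(x^{k_*+1})+\nu\in\partial\cL(x^{k_*+1},\lambda)+{\cal N}_{\XX}(x^{k_*+1})
\]
equals $[\nabla f(x^{k_*+1})-\nabla\tilde{f}(x^{k_*+1}\mid x^{k_*})]-L\|d^{k_*}\|^{\kappa-1}d^{k_*}+\sum_i\lambda_i[\nabla c_i(x^{k_*+1})-\nabla\tilde{c_i}(x^{k_*+1}\mid x^{k_*})]$. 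Bounding the three pieces by Assumption~\ref{assumption:surrogate} gives $\|g\|\le 2L\|d^{k_*}\|^{\kappa}+L\|d^{k_*}\|^{\kappa}+2B\sum_i L_i\|d^{k_*}\|^{\kappa_i}$; using $\|d^{k_*}\|\le1$ and $\kappa,\kappa_i\ge\kappa_{\min}$ collapses every power to $\|d^{k_*}\|^{\kappa_{\min}}$, so $\dist(0,\partial\cL(x^{k_*+1},\lambda)+{\cal N}_{\XX}(x^{k_*+1}))\le\|g\|=O(\|d^{k_*}\|^{\kappa_{\min}})=O(K^{-\kappa_{\min}/(\kappa+1)})$. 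For the feasibility estimate, complementary slackness gives $\tilde{c_i}(x^{k_*+1}\mid x^{k_*})=0$ whenever $\lambda_i>0$, and majorization gives $-c_i(x^{k_*+1})\ge0$; for the upper bound I would use $\tilde{c_i}(x^{k_*}\mid x^{k_*})=c_i(x^{k_*})$ to write $-c_i(x^{k_*+1})=\int_0^1[\nabla\tilde{c_i}(x^{k_*}+td^{k_*}\mid x^{k_*})-\nabla c_i(x^{k_*}+td^{k_*})]^\top d^{k_*}\,dt$, whose integrand is at most $2L_i t^{\kappa_i}\|d^{k_*}\|^{\kappa_i+1}$, giving $-c_i(x^{k_*+1})\le\frac{2L_i}{\kappa_i+1}\|d^{k_*}\|^{\kappa_i+1}=O(\|d^{k_*}\|)=O(K^{-1/(\kappa+1)})$.

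The main obstacle is not any single estimate but ensuring the whole reduction is legitimate: that Assumption~\ref{assum:upper} can actually be invoked at $x^{k_*}$, which hinges on the iterates remaining strictly feasible inside ${\rm Lev}(x^0)$ (maintained from Step~0 via $c_i(x^{k+1})\le\tilde{c_i}(x^{k+1}\mid x^k)\le0$, being strict away from the base point), and that the prox-term gradient $L\|d^{k_*}\|^{\kappa-1}d^{k_*}$, though its factor $\|d^{k_*}\|^{\kappa-1}$ is singular as $\|d^{k_*}\|\to0$ when $\kappa<1$, still contributes only the benign $O(\|d^{k_*}\|^{\kappa})$.
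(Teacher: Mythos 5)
Your proposal is correct and follows essentially the same route as the paper's proof: pick $k_*$ via the telescoped descent bound \eqref{eqn:dkmin} so that $\|d^{k_*}\|\le\varrho$, invoke Assumption~\ref{assum:upper} to get a bounded multiplier, read off the KKT system \eqref{eqn:sub-KKT} of the convex subproblem, and charge the discrepancy to the surrogate-gradient H\"olderian bounds plus the $L\|d^{k_*}\|^{\kappa}$ contribution of the prox term, arriving at the same $3L\|d^{k_*}\|^{\kappa}+2\sum_i\lambda_iL_i\|d^{k_*}\|^{\kappa_i}$ estimate.

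The one place you genuinely diverge is the constraint estimate. The paper bounds $\tilde{c_i}(x^{k_*+1}\mid x^{k_*})-\tilde{c_i}(x^{k_*}\mid x^{k_*})$ by $N_i\|d^{k_*}\|$ using the gradient bound $\|\nabla\tilde{c_i}\|\le N_i$ from Assumption~\ref{assumption:surrogate} (and, as written, actually controls $-c_i(x^{k_*})$ rather than $-c_i(x^{k_*+1})$). You instead integrate the difference $\nabla\tilde{c_i}-\nabla c_i$ along the segment, which directly targets $-c_i(x^{k_*+1})$ as the statement requires, dispenses with the $N_i$ hypothesis entirely, and yields the sharper $\tfrac{2L_i}{\kappa_i+1}\|d^{k_*}\|^{\kappa_i+1}=O\big(K^{-(\kappa_i+1)/(\kappa+1)}\big)$, of which the claimed $O\big(K^{-1/(\kappa+1)}\big)$ is a weakening. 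Your flag about needing $x^{k_*}$ to be strictly feasible before Assumption~\ref{assum:upper} can be invoked is a fair point that the paper's proof also glosses over (it is resolved only in Section~\ref{subsection:Holder-NCVX} by taking the $L_i$ strictly above the minimal constants), so it is not a defect of your argument relative to the paper's.
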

As a remark, in case $\kappa=\kappa_1=\cdots=\kappa_m$ and the target precision $\epsilon$ is small enough, then the iteration complexity of CMMA to reach an $\epsilon$-KKT solution is $O\big(\epsilon^{-(\kappa+1)/\kappa}\big)$. 

\begin{proof}
	According to \eqref{eqn:dkmin}, when the iteration number $K$ is large enough so that it satisfies the requirement of Theorem \ref{iter-compl}, there exists some $0\leq k_*\leq K-1$ such that $d^{k_*} = x^{k_*+1}-x^{k_*}$ satisfies
	$$\|d^{k_*}\|\leq \left(\frac{(\kappa+1)\Delta}{LK}\right)^{\frac{1}{\kappa+1}} \leq \varrho.$$
	
	By Assumption~\ref{assum:upper} for the  subproblem in Algorithm CMMA, there exist an $l^{k_*+1}\in \partial r(x^{k_*+1})$ and a Lagrangian multiplier $\lambda\in\RR^m_+$ such that 
	\begin{eqnarray}
		\label{eqn:sub-KKT} 
		\begin{cases}
			\!\nabla \tilde{f}(x^{k_*+1}\mid x^{k_*})\!+\!l^{k_*+1}\!+\!L \|d^{k_*}\|^{\kappa-1}d^{k_*}+\sum_{i=1}^m \lambda_i\nabla\tilde{c_i}(x^{k_*+1}\!\mid \!x^{k_*})\!\in\! - {\cal N}_{\XX}(x^{k_*+1}), \\
			\lambda_i\tilde{c_i}(x^{k_*+1}\mid x^{k_*})=0, \quad i\in[m],
		\end{cases}
	\end{eqnarray}
	where ${\cal N}_{\XX}(x)$ denotes the normal cone of $\XX$ at $x$. Then, by Assumption \ref{assumption:surrogate} we have
	\begin{eqnarray*}
		& & {\rm dist}\Big(0,\partial \cL (x^{k_*+1},\lambda)+{\cal N}_{\XX}(x^{k_*+1})\Big)\\
		&\leq& {\rm dist}\Big(0,\nabla f(x^{k_*+1})+l^{k_*+1}+\sum_{i=1}^m \lambda_i\nabla c_i(x^{k_*+1})+{\cal N}_{\XX}(x^{k_*+1})\Big) \\
		&\overset{(i)}{\leq}&{\rm dist}\Big(0,\nabla \tilde{f}(x^{k_*+1}\mid x^{k_*})+l^{k_*+1}+\sum_{i=1}^m \lambda_i\nabla\tilde{c_i}(x^{k_*+1}\mid x^{k_*})+{\cal N}_{\XX}(x^{k_*+1})\Big)\\
		& & +2L\|d^{k_*}\|^{\kappa}+2\sum_{i=1}^m \lambda_i L_i\|d^{k_*}\|^{\kappa_i} \\
		&\overset{(ii)}{\le }&
		3L\|d^{k_*}\|^{\kappa}+2\sum_{i=1}^m \lambda_i L_i\|d^{k_*}\|^{\kappa_i}\nonumber,
	\end{eqnarray*} 
	where (i) is due to Assumption \ref{assumption:surrogate} and (ii) is due to the first relationship of \eqref{eqn:sub-KKT}. 
	Together with Assumption \ref{assum:upper} that suggests $\|\lambda\|_\infty\leq B$ and the bound on $\|d^{k_*}\|$, we obtain 
	\begin{eqnarray*}
		&&{\rm dist}\Big(0,\partial \cL (x^{k_*+1},\lambda)+{\cal N}_{\XX}(x^{k_*+1})\Big)\\
		&\leq& 3L\left(\frac{(\kappa+1)\Delta}{LK}\right)^{\frac{\kappa}{\kappa+1}}+2\sum_{i=1}^m BL_i\left(\frac{(\kappa+1)\Delta}{LK}\right)^{\frac{\kappa_i}{\kappa+1}},
	\end{eqnarray*} 
	which is an $O\big(K^{-\kappa_{\min}/(\kappa+1)}\big)$ upper bound. In addition, for each $i\in[m]$, if $\lambda_i>0$, the second relationship of \eqref{eqn:sub-KKT} implies that $\tilde{c_i}(x^{k_*+1}\mid x^{k_*})=0$, which further leads to $$0\leq -c_i(x^{k_*})=\tilde{c_i}(x^{k_*+1}\mid x^{k_*})-\tilde{c_i}(x^{k_*}\mid x^{k_*}) \overset{(i)}{\leq} N_i\|d^{k_*}\| \leq   N_i \left(\frac{(\kappa+1)\Delta}{LK}\right)^{1/(\kappa+1)},$$
	which is an $O\big(K^{-1/(\kappa+1)}\big)$ upper bound, where (i) is due to Assumption \ref{assumption:surrogate}.
\end{proof}

\subsection{Discussions on Assumption \ref{assum:upper}} \label{subsec:about_dual-boundedness}

Though Assumption \ref{assum:upper} is less conventional compared to Assumptions~\ref{assumption:surrogate} and \ref{assumption:Slater}, there are many other tangible conditions that imply this assumption. Below we provide such a mild condition that implies Assumption \ref{assum:upper}, which requires each $(P_k)$
to be strictly feasible in a uniform fashion. 
\begin{proposition}
	\label{proposition: As.4-2-As.3}
	If there exist $U,\tau>0$ such that for any subproblem $(P_x)$ with $x\in{\rm Lev}(x^0)$ being strictly feasible to $(P)$, there is (assumed to exist) a point $\hat{x}$ strictly feasible to subproblem $(P_x)$ that satisfies
	$$\tilde{f}(\hat{x}\mid x)+r(\hat{x})+\frac{L}{1+\kappa}\|\hat{x}-x\|^{1+\kappa}\leq U\quad\mbox{ and }\quad\tilde{c}_i(\hat{x} \mid x) \leq -\tau, \,\,\,i\in [m],$$
	then Assumption~\ref{assum:upper} will hold. 
\end{proposition}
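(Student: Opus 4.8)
The plan is to recognize that $(P_x)$ is a convex program and that the stated hypothesis is precisely a uniform Slater condition, so the conclusion follows from the classical argument bounding Lagrange multipliers in terms of the Slater gap. First I would record convexity of the subproblem: by Assumption~\ref{assumption:surrogate} the surrogates $\tilde{f}(\cdot\mid x)$ and $\tilde{c}_i(\cdot\mid x)$ are convex, the term $\frac{L}{\kappa+1}\|z-x\|^{\kappa+1}$ is convex since $\kappa+1\ge 1$, and $r$ is convex; thus $(P_x)$ minimizes the convex objective $\phi(z):=\tilde{f}(z\mid x)+\frac{L}{\kappa+1}\|z-x\|^{\kappa+1}+r(z)$ over the convex set $\{z\in\XX:\tilde{c}_i(z\mid x)\le 0,\ i\in[m]\}$. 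The hypothesis supplies a strictly feasible $\hat{x}$ with margin $\tau$ and $\phi(\hat{x})\le U$, so Slater's condition holds for $(P_x)$, and convex strong duality guarantees a KKT multiplier $\lambda\in\RR^m_+$. (I would also note that $z^*(x)$ exists, since $\phi$ is coercive: the superlinear penalty $\|z-x\|^{\kappa+1}$ dominates the at-most-linear decay of the convex functions $\tilde{f}$ and $r$, and the feasible set is closed.)

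Next I would run the standard multiplier estimate. At a saddle point, $\phi(z^*(x))=\min_z\{\phi(z)+\sum_{i=1}^m\lambda_i\tilde{c}_i(z\mid x)\}\le \phi(\hat{x})+\sum_{i=1}^m\lambda_i\tilde{c}_i(\hat{x}\mid x)\le \phi(\hat{x})-\tau\sum_{i=1}^m\lambda_i$, where the last inequality uses $\tilde{c}_i(\hat{x}\mid x)\le-\tau$ and $\lambda_i\ge 0$. Rearranging gives $\tau\sum_{i=1}^m\lambda_i\le \phi(\hat{x})-\phi(z^*(x))$, so it remains to bound the right-hand side uniformly in $x$.

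The crux — and the only step beyond the textbook argument — is a uniform lower bound on $\phi(z^*(x))$. Here I would exploit the majorization structure: discarding the nonnegative penalty term and using $\tilde{f}(z^*(x)\mid x)\ge f(z^*(x))$ yields $\phi(z^*(x))\ge f(z^*(x))+r(z^*(x))=F(z^*(x))$. Moreover $z^*(x)$ is feasible for $(P)$, because $\tilde{c}_i(z^*(x)\mid x)\le 0$ together with $\tilde{c}_i(\cdot\mid x)\ge c_i(\cdot)$ forces $c_i(z^*(x))\le 0$, and $z^*(x)\in\XX$. Consequently $\phi(z^*(x))\ge F(z^*(x))\ge F^*$, where $F^*$ is the optimal value of $(P)$, which is finite by Assumption~\ref{assumption:Slater}. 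Combining with $\phi(\hat{x})\le U$ gives $\sum_{i=1}^m\lambda_i\le (U-F^*)/\tau$, hence $\|\lambda\|_\infty\le B:=(U-F^*)/\tau$, a bound independent of $x$.

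Finally I would observe that this estimate never invoked $\|z^*(x)-x\|\le\varrho$, so Assumption~\ref{assum:upper} holds with the above $B$ and \emph{any} choice of $\varrho>0$. The main obstacle is the uniform lower bound on $\phi(z^*(x))$; once the surrogate inequality $\tilde{f}(\cdot\mid x)\ge f(\cdot)$ and the feasibility-propagation $\tilde{c}_i\ge c_i$ are used to reduce it to the constant $F^*$, everything else is the routine Slater-to-bounded-multipliers computation.
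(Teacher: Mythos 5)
Your proposal is correct and follows essentially the same route as the paper: both arguments use the Slater point $\hat{x}$ together with complementary slackness to get $\tau\sum_i\lambda_i\le U-\phi(z^*(x))$, then lower-bound $\phi(z^*(x))$ by $F^*$ via the majorization $\tilde f\ge f$ and the feasibility of $z^*(x)$ for $(P)$, arriving at the same constant $B=(U-F^*)/\tau$ with $\varrho=+\infty$. The only cosmetic difference is that you invoke the saddle-point inequality directly where the paper derives the same estimate from the first-order variational inequality plus convexity of each term.
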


Note that this is a stronger version of the Slater condition by assuming uniform bounds with respect to all the interior points. This type of conditions often come in handy when
Lagrangian multipliers need to be bounded; see e.g.~\cite{bai2022achieving,chen2022near}. Below we present a proof for this proposition.

\begin{proof}
	Note that the coercivity of the term $\frac{L}{\kappa+1}\|z-x\|^{\kappa+1}$ implies the existence of an optimal solution $z^*(x)$ to the subproblem $(P_x)$, together with the Slater's condition implied by the existence of $\hat{x}$, there exist  Lagrangian multiplier $\lambda\in\RR^m_+$ and a subgradient $l\in \partial r(z^*(x))$ such that
	\[
	\Big\langle \nabla \tilde{f}(z^*(x)\mid x)+l+L\|d\|^{\kappa-1}d+\sum_{i=1}^m \lambda_i\nabla\tilde{c_i}(z^*(x)\mid x)\,,\, \hat{x}-z^*(x) \Big\rangle\geq 0, 
	\]
	where $d = z^*(x)-x$ and $\hat{x}, z^\star(x) \in \XX$. By convexity of $\tilde{f}(\cdot\mid x)$, $r(\cdot)$, $\frac{L}{1+\kappa}\|x-x^k\|^{1+\kappa}$ and $\tilde{c_i}(\cdot\mid x^k)$, we further have
	\begin{eqnarray}
		0&\leq &\tilde{f}(\hat{x}\mid x)+r(\hat{x})+\frac{L}{1+\kappa}\|\hat{x}-x\|^{1+\kappa}+\sum_{i=1}^m \lambda_i \tilde{c_i}(\hat{x}\mid x)\nonumber\\
		&&-\tilde{f}(z^*(x)\mid x)-r(z^*(x))-\frac{L}{1+\kappa}\|d\|^{1+\kappa}-\sum_{i=1}^m \lambda_i \tilde{c_i}(z^*(x)\mid x)\nonumber\\
		&\leq& \tilde{f}(\hat{x}\mid x)+r(\hat{x})+\frac{L}{1+\kappa}\|\hat{x}-x\|^{1+\kappa}+\sum_{i=1}^m \lambda_i \tilde{c_i}(\hat{x}^k\mid x)-F(z^*(x)) \nonumber\\
		&\leq& U+\sum_{i=1}^m \lambda_i (-\tau)-F^*, \nonumber
	\end{eqnarray}
	where $F^*$ denotes the optimal value and the second inequality is because 
	$\sum_{i=1}^m \lambda_i \tilde{c_i}(z^*(x)\mid x)=0.$
	Therefore, we have $\lambda_j\leq \sum_{i=1}^m \lambda_i \leq \frac{U-F^*}{\tau}=:B, \forall j\in[m]$. That is, Assumption \ref{assum:upper} holds with $\varrho=+\infty$ and $B = \frac{U-F^*}{\tau}.$
\end{proof}

We would like to emphasize that the condition described in Proposition \ref{proposition: As.4-2-As.3} is not the only one guaranteeing Assumption \ref{assum:upper}. In the later discussion, we will introduce other sufficient conditions that imply  Assumption~\ref{assum:upper}, based on the detailed constructions of the surrogate functions, as well as the properties of the problem itself.

\section{Constructing Convex Surrogates} 
\label{Surrogate_Construction}
\subsection{Gradient H\"olderian Continuous Nonconvex Function}
\label{subsection:Holder-NCVX}
Let us first consider the case where $f(x)$ and $c_i(x)$ are smooth but possibly nonconvex functions. 
We also assume $\XX = \RR^n$ to be the entire space and we assume the gradients of these functions are H\"olderian continuous.  
To simplify the presentation, we adopt the following terminology that a mapping $\psi$ is $(L,\kappa)$-H\"olderian continuous if $\|\psi(x)-\psi(y)\|\leq L\|x-y\|^\kappa$ for any $x,y$. Based on this notation, we proceed to a detailed study of upper bounding in this section. 
\begin{assumption}
	\label{assumption:Holder-Grad} 
	The function $f$ is $(L,\kappa)$-H\"olderian continuous for some $L>0,\kappa\in(0,1]$, and the function $c_i$ is $(L_i,\kappa_i)$-H\"olderian continuous for some $L_i>0,\kappa_i\in(0,1]$, for each $i\in[m]$. 
\end{assumption}
Under this assumption, for any $x,y$, we have 
\begin{equation}
	\label{eqn:descent-lemma-Holder}
	\begin{cases}
		f(x) \leq f(y) + \nabla f(y)^\top (x-y ) + \frac{L}{1+\kappa} \|x-y\|^{1+\kappa},\\
		c_i(x) \leq c_i(y) + \nabla c_i(y)^\top (x -y) + \frac{L_i}{1+\kappa_i} \|x-y\|^{1+\kappa_i}, i\in[m].
	\end{cases}
\end{equation} 
For convenience of our discussion, we will take $L_i$ to be strictly greater than the minimum constant. That is, we set $L_i$ to be strictly greater than $\sup_{x\neq y}\frac{\|\nabla c_i(x)-\nabla c_i(y)\|}{\|x-y\|^{\kappa_i}}$, for each $i\in[m]$. 
Under Assumption~\ref{assumption:Holder-Grad}, a natural construction of the upper convex surrogates can be: 
\begin{equation}
	\label{eqn:Holder-surrogate}
	\begin{cases}
		\tilde{f}(x\mid y) \,:= f(y)\,+\langle \nabla f(y), x-y \rangle\,+\,\frac{L}{1+\kappa}\|x-y\|^{1+\kappa}, & \\
		\tilde{c_i}(x\mid y) := c_i(y)+\langle \nabla c_i(y), x-y \rangle+\frac{L_i}{1+\kappa_i}\|x-y\|^{1+\kappa_i}, & i\in[m].
	\end{cases}
\end{equation}
One can easily verify that the above surrogates satisfy Assumption \ref{assumption:surrogate}.
Because we choose $L_i$ to be strictly greater than the minimal possible constant, we know the strict feasibility of $x^k$ must imply the strict feasibility of $x^{k+1}$. Given a strictly feasible initial solution $x^0$, this inductively ensures the strict feasibility of all the iterations.  Therefore, Slater's condition holds for all the subproblems $(P_k)$, combined with the coercivity ensured by the proximal term, an optimal solution and the associated Lagrangian multipliers always exist. Next, let us present an alternative condition that guarantees Assumption~\ref{assum:upper}.

\begin{proposition} \label{proposition:Gen-LICT-to-Assp3}
	Assumption \ref{assumption:Holder-Grad} together with the following conditions implies Assumption \ref{assum:upper}:\\
	\noindent\textbf{\emph{(i)}.} There exist constants $\delta,\delta'>0$ such that for  $\forall x\in{\rm Lev}(x^0)$, we have $\sigma_{\min}(J_I(x))\geq \delta'$, where the rows of matrix $J_I(x)$ consist of $\nabla c_i(x)^\top$ with $i\in I(x):=\{i\in[m]:c_i(x)\geq-\delta\}$.\\
	\noindent\textbf{\emph{(ii)}.} There exist constants $M,M_i>0, i\in[m]$ such that for  $\forall x\in{\rm Lev}(x^0)$, we have $\|u\|\leq M$ for all $u\in \nabla f(x)+\partial r(x)$, and $\|\nabla c_i(x)\|\leq M_i$ for all $i\in[m]$.  
\end{proposition}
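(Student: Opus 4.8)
The plan is to produce an explicit radius $\varrho$ and bound $B$ directly from the KKT system of the subproblem, using condition (ii) to control the ``constant'' data and condition (i) to invert the active-constraint Jacobian. First I would record the optimality system for $(P_x)$. As already observed above, strict feasibility of $x$ transfers to $(P_x)$ (so Slater holds for the subproblem) and the proximal term makes it coercive, hence an optimal $z^*=z^*(x)$ and a multiplier $\lambda\in\RR^m_+$ exist. Writing $d=z^*(x)-x$ and inserting the explicit surrogates \eqref{eqn:Holder-surrogate}, stationarity becomes
\[
\nabla f(x)+l+2L\|d\|^{\kappa-1}d+\sum_{i=1}^m\lambda_i\big(\nabla c_i(x)+L_i\|d\|^{\kappa_i-1}d\big)=0,\quad l\in\partial r(z^*),
\]
together with the complementarity relations $\lambda_i\,\tilde{c_i}(z^*\mid x)=0$, $i\in[m]$.

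Second, I would locate the active constraints. If $\lambda_i>0$ then $\tilde{c_i}(z^*\mid x)=0$, i.e. $c_i(x)=-\langle\nabla c_i(x),d\rangle-\tfrac{L_i}{1+\kappa_i}\|d\|^{1+\kappa_i}$; bounding the right side through condition (ii) and $\|d\|\leq\varrho$ gives $c_i(x)\geq -M_i\varrho-\tfrac{L_i}{1+\kappa_i}\varrho^{1+\kappa_i}$, which is $\geq-\delta$ once $\varrho$ is small. Thus the support of $\lambda$ lies in $I(x)$ from condition (i), so $\sum_i\lambda_i\nabla c_i(x)=J_I(x)^\top\lambda_I$, where $\lambda_I$ is the restriction of $\lambda$ to $I(x)$.

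Third comes the norm estimate. Rearranging stationarity yields $J_I(x)^\top\lambda_I=g-\big(\sum_{i\in I(x)}\lambda_i L_i\|d\|^{\kappa_i-1}\big)d$, where $g:=-\nabla f(x)-l-2L\|d\|^{\kappa-1}d$. I can bound $\|g\|\leq M+3L\varrho^\kappa$: apply condition (ii) at $z^*\in{\rm Lev}(x^0)$ to get $\|\nabla f(z^*)+l\|\leq M$, transfer from $z^*$ to $x$ using the $(L,\kappa)$-H\"olderian continuity of $\nabla f$ (Assumption \ref{assumption:Holder-Grad}), and add the proximal term $2L\|d\|^\kappa\leq2L\varrho^\kappa$. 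The key point is that all surrogate-curvature contributions are parallel to the single direction $d$, so their sum has the clean norm $\sum_{i\in I(x)}\lambda_i L_i\|d\|^{\kappa_i}$, which for $\varrho\leq1$ is at most $\sqrt{m}\,L_{\max}\varrho^{\kappa_{\min}}\|\lambda_I\|$. Invoking $\sigma_{\min}(J_I(x))\geq\delta'$ then gives $\delta'\|\lambda_I\|\leq\|g\|+\sqrt{m}\,L_{\max}\varrho^{\kappa_{\min}}\|\lambda_I\|$; choosing $\varrho$ also small enough that $\sqrt{m}\,L_{\max}\varrho^{\kappa_{\min}}\leq\delta'/2$ lets me absorb the last term and conclude $\|\lambda\|_\infty\leq\|\lambda_I\|\leq 2(M+3L\varrho^\kappa)/\delta'=:B$, which is Assumption \ref{assum:upper}.

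The step I expect to be the main obstacle is this final absorption. Because the H\"older surrogate injects the extra term $\sum_i\lambda_i L_i\|d\|^{\kappa_i-1}d$ into stationarity, the multiplier appears on both sides of the Jacobian inequality and cannot be bounded directly by the problem data. The resolution is to notice that these terms all share the direction $d$, so their total magnitude is $O(\varrho^{\kappa_{\min}})\|\lambda_I\|$ rather than $O(1)\|\lambda_I\|$, and can be made a strict fraction of $\delta'\|\lambda_I\|$ by shrinking $\varrho$. The delicate bookkeeping is to verify that one and the same $\varrho$ simultaneously forces the active set into $I(x)$ (step two) and defeats the coupling term (step three), and to track that $z^*$, rather than $x$, is the point at which condition (ii) is actually applied.
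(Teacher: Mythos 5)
Your proof is correct and follows essentially the same route as the paper's: shrink $\varrho$ so that complementarity forces the support of $\lambda$ into $I(x)$, then use condition \textbf{(i)} to bound $\|\lambda\|$ and absorb the $O(\varrho^{\kappa_{\min}})\|\lambda\|$ coupling term contributed by the surrogate curvature. The only cosmetic differences are that the paper exploits $\sigma_{\min}(J_I(x))\geq\delta'$ through per-constraint test vectors $e_i$ orthogonal to the other active gradients rather than the direct inequality $\|J_I(x)^\top\lambda_I\|\geq\delta'\|\lambda_I\|$, and your transfer step from $z^*(x)$ back to $x$ (to account for $l\in\partial r(z^*(x))$ rather than $\partial r(x)$) is a small refinement that the paper glosses over.
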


\begin{proof}
	For any subproblem $(P_x)$ with $x\in{\rm Lev}(x^0)$ being strictly feasible to $(P)$, the KKT condition states that there exists an $l\in \partial r(z^*(x))$ and Lagrangian multiplier $\lambda\in\RR^m_+$ such that
	\begin{equation} \label{eqn:gen-LICQ-KKT}
		\begin{cases}
			\nabla f(x)+l+2L\|d\|^{\kappa-1}d+\sum_{i=1}^m \lambda_i\big(\nabla c_i(x)+L_i\|d\|^{\kappa_i-1}d\big)=0, \\
			\lambda_i\tilde{c_i}(z^*(x)\mid x) = 0, 
		\end{cases}
	\end{equation} 
	where $d = z^*(x)-x$. To verify Assumption \ref{assum:upper}, let us suppose $\|d\|\leq \varrho$ 
	for some small enough $\varrho$ s.t.
	\begin{equation}
		\label{eqn:gen-LICQ-varrho}
		\delta \,\,\geq\,\, \max_{i\in[m]} \Big\{M_i\varrho + \frac{L_i
			\varrho^{\kappa_i+1}}{\kappa_i+1}\Big\}.
	\end{equation}
	For $i\notin I(x)$, we have $c_i(x) < -\delta$. By \eqref{eqn:gen-LICQ-varrho}, definition of $\tilde{c_i}$, and condition \textbf{(ii)}, we have
	\begin{eqnarray} 
		\big|\tilde{c_i}(z^*(x)\mid x)-c_i(x)\big| 
		\leq M_i\|d\|+\frac{L_i}{\kappa_i+1}\|d\|^{\kappa_i+1} \leq  \delta.\nonumber
	\end{eqnarray}
	This further implies that $\tilde{c_i}(z^*(x)\mid x)<0$ and hence $\lambda_i=0$.
	
	For any $i\in I(x)$, condition \textbf{(i)} can be equivalently stated as the existence of a unit vector $\|e_i\|=1$, such that $\langle \nabla c_j(x), e_i \rangle=0$, for all $j\in I(x)\setminus \{i\}$ and $\langle \nabla c_i(x), e_i \rangle \geq \delta'$. Consequently, we have 
	\begin{eqnarray}
		\lambda_i\delta'  \leq \lambda_i\langle\nabla c_i(x),e_i\rangle =  -\langle\nabla f(x)+l,e_i\rangle - \Big(2L\|d\|^{\kappa-1}+\sum_{j=1}^m \lambda_j L_j \|d\|^{\kappa_i-1}\Big)\langle d,e_i\rangle,  \nonumber
	\end{eqnarray}
	where the equality is obtained by taking inner product between $e_i$ and the first equation of \eqref{eqn:gen-LICQ-KKT} and then reordering the terms. Then using Cauchy-Schwarz inequality and condition \textbf{(ii)} of Proposition \ref{proposition:Gen-LICT-to-Assp3} to the above inequality gives 
	\begin{equation} \label{ineq:lambdabound}
		\lambda_i\delta' \leq M+2L\varrho^\kappa+\sum_{j=1}^m \lambda_j L_j\varrho^{\kappa_i}, \quad i\in I(x).
	\end{equation} 
	Note that when $i \notin I(x)$,  we have already proved that $\lambda_i=0$. As $\lambda\geq0$, the above inequality further indicates that 
	$\delta'\|\lambda\|_\infty\leq M+ 2L\varrho^\kappa+\sum_{j=1}^mL_j\varrho^{\kappa_i} \|\lambda\|_\infty$. Suppose the $\varrho$ in Assumption \ref{assum:upper} is taken small enough so that 
	\begin{equation}
		\label{eqn:gen-LICQ-varrho'}
		\delta'/2 \geq \sum_{j=1}^mL_j\varrho^{\kappa_i}.
	\end{equation}
	Then we obtain the following upper bound for the multipliers  
	$\|\lambda\|_\infty\leq 2\left(M+ 2L\varrho^\kappa\right)/\delta'.$
	That said, Assumption \ref{assum:upper} holds with any small enough $\varrho$ satisfying \eqref{eqn:gen-LICQ-varrho} and \eqref{eqn:gen-LICQ-varrho'}, and $B = 2\left(M+ 2L\varrho^\kappa\right)/\delta'.$
\end{proof}
%\hfill $\Box$ 

\subsection{Difference of Convex Functions}
In nonconvex optimization, an important class of problem is the optimization of difference of convex functions, where we assume the objective and constraint functions of $(P)$ to possess the following structure: $f(x)=f^1(x)-f^2(x)$ and $c_i(x)=c_i^1(x)-c_i^2(x)$, where $f^1,f^2,c_i^1, c_i^2$ are all convex smooth functions; see e.g.~\cite{CP2022}.
Under this setting, surrogate functions satisfying Assumption \ref{assumption:surrogate} can  be constructed in the following way, where only the subtracted term is linearized. 

\begin{proposition}
	\label{proposition:DC-Assp1}
	Suppose $f^1,f^2,c_i^1,c^2_i, i\in[m]$ are all smooth convex functions,  $\nabla f, \nabla f^1$ are $(L,\kappa)$-H\"oderian continuous, and $\nabla c_i, \nabla c_i^1$ are $(L_i,\kappa_i)$-H\"olderian continuous, $i\in[m],$ for some $L,L_i>0$ and $\kappa,\kappa_i\in(0,1], i\in[m]$. If in addition $\|\nabla c_i^j(x)\|\leq M_i,$ for all $i\in[m], j\in[2]$ and for all feasible solution $x$, then the following construction of surrogate function satisfies Assumption \ref{assumption:surrogate}:
	$$\begin{cases}
		\,\tilde{f}(x\mid y):=f^1(x)-f^2(y)-\langle \nabla f^2(y), x-y \rangle,\\
		\tilde{c_i}(x\mid y):= \,c_i^1(x)-\,c_i^2(y)-\langle \nabla c_i^2(y)\,, x-y \rangle.
	\end{cases}$$ 
\end{proposition}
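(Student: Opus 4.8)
The plan is to verify each of the four requirements of Assumption~\ref{assumption:surrogate} directly from the definitions of $\tilde{f}(\cdot\mid y)$ and $\tilde{c_i}(\cdot\mid y)$, since the DC structure renders all of them transparent. For fixed $y$, the surrogate $\tilde{f}(x\mid y)$ equals $f^1(x)$ plus terms that are affine (or constant) in $x$, so it inherits convexity from $f^1$ and continuous differentiability from the smoothness of $f^1$; the same reasoning applies to each $\tilde{c_i}(\cdot\mid y)$ via $c_i^1$. For the majorization and tightness, I would compute $\tilde{f}(x\mid y)-f(x) = f^2(x) - f^2(y) - \langle\nabla f^2(y), x-y\rangle$, which is nonnegative by the first-order convexity (gradient) inequality for the convex function $f^2$ and vanishes at $x=y$; the identical computation with $c_i^2$ yields the constraint majorization and equality at $x=y$.

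The step that uses the hypotheses most carefully is the gradient Hölderian bound. Differentiating in $x$ gives $\nabla\tilde{f}(x\mid y) = \nabla f^1(x) - \nabla f^2(y)$, so $\nabla\tilde{f}(x\mid y) - \nabla f(x) = \nabla f^2(x) - \nabla f^2(y)$. Since $\nabla f^2$ is not assumed Hölderian directly, I would route through the identity $\nabla f^2 = \nabla f^1 - \nabla f$, writing $\nabla f^2(x)-\nabla f^2(y) = \big(\nabla f^1(x)-\nabla f^1(y)\big) - \big(\nabla f(x)-\nabla f(y)\big)$ and applying the triangle inequality together with the $(L,\kappa)$-Hölderian continuity of both $\nabla f^1$ and $\nabla f$. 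This produces the bound $2L\|x-y\|^{\kappa}$, which is precisely where the factor of $2$ in Assumption~\ref{assumption:surrogate} originates; the same argument with $\nabla c_i^1$ and $\nabla c_i$ gives $2L_i\|x-y\|^{\kappa_i}$ for each $i\in[m]$.

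Finally, for the uniform gradient bound on the surrogate constraints, I would use $\nabla\tilde{c_i}(x\mid y) = \nabla c_i^1(x) - \nabla c_i^2(y)$, so that the triangle inequality and the hypothesis $\|\nabla c_i^j(x)\|\leq M_i$ for feasible $x$ (and $j\in[2]$) give $\|\nabla\tilde{c_i}(x\mid y)\|\leq 2M_i$; hence Assumption~\ref{assumption:surrogate} holds with $N_i = 2M_i$.

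I do not anticipate a genuine obstacle here, as the argument is essentially a sequence of one-line verifications. The only subtlety worth flagging is in the gradient bound: one must resist the temptation to assume Hölderian continuity of $\nabla f^2$ (which is not among the hypotheses) and instead express $\nabla f^2$ as the difference $\nabla f^1 - \nabla f$, absorbing the mismatch into the constant factor $2$. This is exactly why the proposition phrases its Hölderian assumptions in terms of $\nabla f,\nabla f^1$ and $\nabla c_i,\nabla c_i^1$ rather than in terms of the subtracted convex pieces.
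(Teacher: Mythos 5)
Your proof is correct: the convexity, majorization, tightness at $x=y$, the Hölderian bound via $\nabla f^2=\nabla f^1-\nabla f$ (which is exactly why the hypotheses are stated on $\nabla f$ and $\nabla f^1$), and the uniform bound $N_i=2M_i$ are all verified as intended. The paper states this proposition without proof, and your argument is precisely the routine verification it leaves implicit.
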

To simplify the subproblem, one may also linearize the $f^1(x)$ and $c^1(x)$ terms while adding proximal penalties suggested by the H\"olderian continuity properties.

\subsection{Nested Composite Functions}
Another important case in nonconvex optimization is the nested composite function. In this case, the objective and constraint functions possess the nested composite structure that  $f(x)=f^1(f^2(x))$ and $c_i(x)=c_i^1(c_i^2(x))$. Under the following conditions, surrogate functions that satisfy Assumption \ref{assumption:surrogate} can be easily constructed, as stated in the following proposition. As it is a direct consequence of the construction, we omit a separate proof.  

\begin{proposition}
	\label{proposition:Composite-Assp1} 
	Suppose $f^1$ and $c_i^1, i\in[m]$ are smooth convex functions. $\nabla f^1$ is $L$-Lipschitz continuous, $\nabla f^2$ is $(\ell,\kappa)$-H\"olderian continuous,  $\nabla c_i^1$ is $L_i$-Lipschitz continuous, and $\nabla c_i^2$ is $(\ell_i,\kappa_i)$-H\"olderian continuous,  $i\in[m]$.   If in addition, $\|\nabla f^1(z)\|\leq M $ and $\|\nabla c_i^1(z)\|\leq M_i$, for some finite $M,M_i>0$, $i\in[m]$. Then the convex surrogate functions 
	$$\begin{cases}
		\tilde{f}(x\mid y) \,:= f^1\big(f^2(y)+\langle \nabla f^2(y), x-y \rangle\big) + \frac{ML}{\kappa+1}\|x-y\|^{\kappa+1}, \\
		\tilde{c_i}(x\mid y):=c_i^1\big(c_i^2(y)+\langle \nabla c_i^2(y), x-y \rangle\big) \!+ \frac{M_iL_i}{\kappa_i+1}\|x-y\|^{\kappa_i+1}, 
	\end{cases}$$ 
	satisfy Assumption \ref{assumption:surrogate} within arbitrary compact region.
\end{proposition}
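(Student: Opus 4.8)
The plan is to check the requirements of Assumption~\ref{assumption:surrogate} one at a time for $\tilde f(\cdot\mid y)$; the verification for $\tilde c_i(\cdot\mid y)$ is verbatim after replacing $(f^1,f^2,L,\ell,\kappa,M)$ by $(c_i^1,c_i^2,L_i,\ell_i,\kappa_i,M_i)$. Write $\phi_y(x):=f^2(y)+\langle\nabla f^2(y),x-y\rangle$ for the first-order model of the inner map at $y$, so that $\tilde f(x\mid y)=f^1(\phi_y(x))+\tfrac{ML}{\kappa+1}\|x-y\|^{\kappa+1}$. Convexity and $C^1$-smoothness are immediate: $\phi_y$ is affine in $x$ and $f^1$ is convex and smooth, hence $f^1\circ\phi_y$ is convex and smooth, while the proximal term is convex and (since $\kappa+1>1$) continuously differentiable. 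Setting $x=y$ gives $\phi_y(y)=f^2(y)$ and a vanishing proximal term, so $\tilde f(y\mid y)=f^1(f^2(y))=f(y)$, which is the required tightness at $x=y$.

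The core of the argument is the majorization $\tilde f(x\mid y)\ge f(x)$, which I would obtain by a two-level error estimate. First, integrating the $(\ell,\kappa)$-H\"olderian continuity of $\nabla f^2$ along the segment from $y$ to $x$ --- exactly the computation behind \eqref{eqn:descent-lemma-Holder} --- yields the inner-linearization bound $|f^2(x)-\phi_y(x)|\le\tfrac{\ell}{1+\kappa}\|x-y\|^{1+\kappa}$. Next I transfer this error through the outer map: since $\|\nabla f^1\|\le M$, the map $f^1$ is $M$-Lipschitz, so $f(x)-f^1(\phi_y(x))=f^1(f^2(x))-f^1(\phi_y(x))\le M\,|f^2(x)-\phi_y(x)|\le\tfrac{M\ell}{1+\kappa}\|x-y\|^{1+\kappa}$. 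Hence the proximal penalty dominates the composition error --- and majorization holds globally --- provided its coefficient is at least $\tfrac{M\ell}{1+\kappa}$; i.e.\ the controlling constant is the product of the outer-gradient bound $M$ and the inner H\"older modulus $\ell$.

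For the gradient-matching condition I would differentiate by the chain rule, $\nabla\tilde f(x\mid y)=f^{1\prime}(\phi_y(x))\nabla f^2(y)+ML\|x-y\|^{\kappa-1}(x-y)$, subtract $\nabla f(x)=f^{1\prime}(f^2(x))\nabla f^2(x)$, and split the difference into three pieces: (a) $f^{1\prime}(\phi_y(x))\big[\nabla f^2(y)-\nabla f^2(x)\big]$, bounded by $M\ell\|x-y\|^{\kappa}$ via $\|\nabla f^1\|\le M$ and the H\"older continuity of $\nabla f^2$; (b) $\big[f^{1\prime}(\phi_y(x))-f^{1\prime}(f^2(x))\big]\nabla f^2(x)$, bounded using the $L$-Lipschitz continuity of $\nabla f^1$ together with the inner-error bound, which produces a term of order $\|x-y\|^{1+\kappa}$; and (c) the proximal gradient, of order $\|x-y\|^{\kappa}$. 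On a fixed compact region $\Omega$, $\|\nabla f^2(\cdot)\|$ is bounded (continuity on a compact set) and $\|x-y\|\le\mathrm{diam}(\Omega)$, so the order-$\|x-y\|^{1+\kappa}$ contribution of (b) is absorbed into an $O(\|x-y\|^{\kappa})$ bound; summing (a)--(c) gives $\|\nabla\tilde f(x\mid y)-\nabla f(x)\|\le\bar L\|x-y\|^{\kappa}$ with $\bar L$ depending on $\Omega$. The same compactness bookkeeping handles $\|\nabla\tilde c_i(x\mid y)\|$ by the triangle inequality (outer gradient bounded by $M_i$, inner gradient bounded on $\Omega$, proximal gradient controlled by $\mathrm{diam}(\Omega)$), yielding the constants $N_i$.

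I expect the gradient-matching step to be the main obstacle. Because only the inner map is linearized while the outer map is kept intact, the gradient error genuinely carries an $\|x-y\|^{1+\kappa}$ term (piece (b)) that is not of the target order $\|x-y\|^{\kappa}$ on its own; it is precisely the restriction to a compact region --- which lets one write $\|x-y\|^{1+\kappa}\le\mathrm{diam}(\Omega)\,\|x-y\|^{\kappa}$ and use boundedness of the inner gradient --- that rescues the estimate, and the same localization is what the bound $\|\nabla\tilde c_i\|\le N_i$ needs. This is exactly why the conclusion is asserted only ``within an arbitrary compact region.'' Once the inner-linearization bound is in hand, the majorization and the remaining constant bookkeeping are routine.
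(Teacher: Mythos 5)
The paper states Proposition~\ref{proposition:Composite-Assp1} without providing a proof, so there is no argument of the authors' to compare yours against; judged on its own terms, your verification is correct and follows the natural route. Convexity and smoothness of the surrogate (composition of the convex smooth $f^1$ with an affine map, plus a convex proximal term), tightness at $x=y$, the integrated H\"older estimate $\|f^2(x)-\phi_y(x)\|\le\frac{\ell}{1+\kappa}\|x-y\|^{1+\kappa}$ pushed through the $M$-Lipschitz outer map to get majorization, and the three-way split of the gradient error are all sound, and your diagnosis of exactly where the restriction to a compact region is needed --- absorbing the $\|x-y\|^{1+\kappa}$ contribution of the outer-gradient mismatch into an $O(\|x-y\|^{\kappa})$ bound, and securing the uniform bounds $N_i$ --- is the right one. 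One point you should state explicitly rather than leave implicit: your majorization argument shows that the proximal coefficient must dominate $\frac{M\ell}{1+\kappa}$, where $\ell$ is the H\"older modulus of $\nabla f^2$, whereas the surrogate in the statement carries the coefficient $\frac{ML}{\kappa+1}$ with $L$ the Lipschitz constant of $\nabla f^1$ (a constant that, as your own decomposition shows, only enters through the higher-order piece (b) of the gradient error). As written, the stated surrogate majorizes $f$ only when $L\ge\ell$; the constant in the proposition is presumably a misprint for $M\ell$ (and $M_iL_i$ for $M_i\ell_i$), and your derivation is the one that actually supports the claim. Apart from flagging that discrepancy, nothing further is needed.
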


\section{Convex and Gradient H\"{o}lderian Continuous Case} \label{holderian_case}

In this section, we study the convex case of the main formulation $(P)$, where $f$, $r$ and $c_i, i\in[m]$ are all smooth convex functions, and $\XX\subseteq \RR^n$ is a closed convex set. To emphasize the convexity of the current problem setting, we will call $(P)$ as $(CP)$ in this section. Suppose Assumption \ref{assumption:Holder-Grad} and Assumption \ref{assumption:Slater} hold. That is, we require $f$ and $c_i,i\in[m]$ to satisfy the gradient H\"olderian continuity condition, and we require $(CP)$ to satisfy Slater's condition and bounded level set condition. According to our discussion in Section \ref{subsection:Holder-NCVX}, we can adopt the convex upper surrogate function constructed by \eqref{eqn:Holder-surrogate} and obtain the following variant of CMMA. \vspace{0.3cm}

\shadowbox{\begin{minipage}{5.2in}
		
		{\bf Algorithm GHMA (Gradient H\"{o}lderian Majorization Algorithm)}
		
		\begin{description}
			\item[Step 0:] Choose $x^0\in \XX$ to satisfy $c_i(x^0) < 0$, $i=1,2,...,m$, and let $k:=0$.
			\item[Step 1:] Solve the following subproblem with $x=x^k$:
			\[
			\begin{array}{lll}
				\!\!\!\!(CP_x)\quad & z^*(x) := \argmin_{z\in\XX} & f(x) + \nabla f(x)^\top (z-x) + \frac{L}{1+\kappa} \|z-x\|^{1+\kappa} + r(z)\\
				& \qquad\qquad\,\,\,\,\mbox{s.t.} & c_i(x) + \nabla c_i(x)^\top (z-x) + \frac{L_i}{1+\kappa_i} \|z-x\|^{1+\kappa_i} \le 0
			\end{array}
			\]
			and update $x^{k+1} = z^*(x^k)$.
			\item[Step 2:] If $x^{k+1}=x^k$ then stop. Otherwise, let $k:=k+1$, and go to {\bf Step 1}.
		\end{description}
\end{minipage}} \vspace{0.4cm}

To simplify the notation of the analysis, let us consider the special case where $\kappa_1=\kappa_2=\cdots=\kappa_m = \kappa$. We should notice that even the $\kappa_i$'s have different values, the following analysis still holds.
\begin{assumption}
	\label{assumption:convexity}
	The functions $f$ and $c_i,i\in[m]$ are all convex functions. 
\end{assumption}
Next, we start the analysis of GHMA with the following descent result. 

\begin{lemma}
	\label{lemma:descent-cvx}
	Suppose Assumptions \ref{assumption:Holder-Grad} and \ref{assumption:convexity} hold. For an arbitrary point $x\in{\rm Lev}(x^0)$, the optimal solution $z^*(x)$ to the subproblem  $(CP_{x})$ satisfies that
	\begin{equation}  
		F(x) - F(z^*(x)) \ge  \frac{\kappa}{\kappa+1} \Big(L + \sum_{i=1}^m \lambda_i L_i\Big)\|x-z^*(x)\|^{\kappa+1},
	\end{equation}
	where $\lambda\in\RR^m_+$ is the Lagrangian multiplier associated with $(CP_{x})$.
\end{lemma}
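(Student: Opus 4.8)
The plan is to exploit the fact that, by convexity, the minimizer $z=z^*(x)$ of the convex subproblem $(CP_x)$ in fact minimizes the \emph{full} subproblem Lagrangian over $\XX$, and then to extract a descent of order $1+\kappa$ from the aggregated uniformly convex proximal penalties. Write $d=z-x$ and set $A:=L+\sum_{i=1}^m\mu_i L_i$. Since $\kappa_i=\kappa$ for all $i$, the subproblem Lagrangian
\[
\Psi(w):=\tilde f(w\mid x)+r(w)+\sum_{i=1}^m\mu_i\,\tilde c_i(w\mid x)
\]
is convex in $w$, and by KKT stationarity together with complementary slackness and convexity, $z$ is a global minimizer of $\Psi$ over $w\in\XX$.

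The key step is to split $\Psi=g+h$, where $g(w):=f(x)+\nabla f(x)^\top(w-x)+r(w)+\sum_{i=1}^m\mu_i\big(c_i(x)+\nabla c_i(x)^\top(w-x)\big)$ collects the affine and $r$ terms (hence is convex), and $h(w):=\frac{A}{1+\kappa}\|w-x\|^{1+\kappa}$ is the aggregated proximal term, so that $\nabla h(z)=A\|d\|^{\kappa-1}d$. First I would apply the variational inequality for the minimizer $z$ of the convex $\Psi$ over $\XX$ against the feasible test point $x\in\XX$, obtaining $\langle\xi,d\rangle\le 0$ for some $\xi\in\partial\Psi(z)$. Writing $\xi=\xi_g+\nabla h(z)$ with $\xi_g\in\partial g(z)$ and invoking convexity of $g$ then gives $g(x)-g(z)\ge-\langle\xi_g,d\rangle=-\langle\xi,d\rangle+A\|d\|^{\kappa+1}\ge A\|d\|^{\kappa+1}$. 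Since $h(x)=0$ and $h(z)=\frac{A}{1+\kappa}\|d\|^{1+\kappa}$, combining these yields the sharpened descent $\Psi(x)-\Psi(z)\ge\frac{\kappa}{1+\kappa}A\|d\|^{\kappa+1}$, which is exactly where the prefactor $\frac{\kappa}{\kappa+1}$ originates.

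It then remains to transfer this bound from $\Psi$ to $F$. On the one hand, $\Psi(x)=g(x)=F(x)+\sum_{i=1}^m\mu_i c_i(x)\le F(x)$ because $\mu\in\RR^m_+$ and $x$ is feasible to $(P)$; on the other hand, complementary slackness annihilates the constraint terms at $z$, so $\Psi(z)=\tilde f(z\mid x)+r(z)\ge f(z)+r(z)=F(z)$ using the upper-surrogate property $\tilde f(\cdot\mid x)\ge f(\cdot)$ from \eqref{eqn:descent-lemma-Holder}. Chaining the two directions gives $F(x)-F(z)\ge\Psi(x)-\Psi(z)\ge\frac{\kappa}{\kappa+1}\big(L+\sum_{i=1}^m\mu_i L_i\big)\|x-z\|^{\kappa+1}$, which is the claim.

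The main obstacle is the middle step: upgrading the minimality of $z$ from mere nonnegativity of the gap into a quantitative lower bound of order $\|d\|^{1+\kappa}$. This is precisely the uniform convexity of the order-$(1+\kappa)$ proximal penalty, and the cleanest route is the variational inequality combined with the identity $\langle\nabla h(z),d\rangle=A\|d\|^{\kappa+1}$ while retaining $\langle\xi,d\rangle\le 0$. Two bookkeeping points must be handled with care: (i) using $\kappa_i=\kappa$ so that all proximal powers coalesce into a single term $\frac{A}{1+\kappa}\|w-x\|^{1+\kappa}$ (the remark after the statement indicates the heterogeneous case is an easy variant); and (ii) tracking the two inequalities $\Psi(x)\le F(x)$ and $\Psi(z)\ge F(z)$ in the correct directions, so that the gap measured in $\Psi$ genuinely lower-bounds the gap measured in $F$.
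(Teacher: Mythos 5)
Your proof is correct and is essentially the paper's argument in different packaging: the paper substitutes $z=x$ directly into the KKT variational inequality of $(CP_x)$ and then bounds the resulting terms one by one using the surrogate upper bound on $f$, convexity of $r$, and complementary slackness, which is exactly what your decomposition $\Psi=g+h$ together with the transfer inequalities $\Psi(x)\le F(x)$ and $\Psi(z)\ge F(z)$ accomplishes. In both versions the factor $\frac{\kappa}{\kappa+1}$ arises from the same source, namely the gap between $\langle\nabla h(z),d\rangle=A\|d\|^{\kappa+1}$ delivered by the variational inequality and the value $h(z)=\frac{A}{\kappa+1}\|d\|^{\kappa+1}$ of the proximal term itself.
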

\proof Denote $d = z^*(x)-x$. Then the KKT condition of $(CP_x)$ can be written as 
\begin{equation}
	\label{eqn:GHMA-sub-KKT}
	\begin{cases}
		\Big\langle \nabla f(x) + L \|d\|^{\kappa-1}d +\sum_{i=1}^m \lambda_i \Big( \nabla c_i (x) + L_i\|d\|^{\kappa-1}d \Big) +l, z-z^*(x) \Big\rangle
		\geq 0,\,\,\, \forall z \in \XX,\\
		\lambda_i\cdot\Big(c_i(x) + \nabla c_i(x)^\top d + \frac{L_i}{\kappa+1}\|d\|^{\kappa+1}\Big) = 0, \,\, \forall i\in[m],
	\end{cases} 
\end{equation}
where $l\in\partial r(z^*(x))$. Set $z = x$ in the first relationship of \eqref{eqn:GHMA-sub-KKT} gives 
\begin{eqnarray}
	\label{lm:descent-cvx-1}
	-\nabla f(x)^\top d - l^\top d - \sum_i\lambda_i\nabla c_i(x)^\top d \geq \Big(L+\sum_i\lambda_iL_i\Big)\|d\|^{\kappa+1} . 
\end{eqnarray}
Note that the $(L,\kappa)$-H\"olderian continuity of $f$ indicates that 
\begin{equation}
	\label{lm:descent-cvx-2}
	f(x) + \nabla f(x)^\top d + \frac{L}{\kappa+1}\|d\|^{\kappa+1} - f(z^*(x)) \geq 0 . 
\end{equation}
The convexity of $r$ gives $r(x)-r(z^*(x)) + l^\top d\geq 0$. The complementary slackness of \eqref{eqn:GHMA-sub-KKT} implies
$$\lambda_i\nabla c_i(x)^\top d = -\frac{\lambda_iL_i}{\kappa+1}\|d\|^{\kappa+1} - \lambda_ic_i(x) \overset{(i)}{\geq} -\frac{\lambda_iL_i}{\kappa+1}\|d\|^{\kappa+1}$$
where (i) is because $\lambda_i\geq0$ and $c_i(x)\leq0$. Substituting them to \eqref{lm:descent-cvx-1} proves the lemma.  \hfill $\Box$

Therefore, the iterates of GHMA have a monotonically decreasing sequence in objective value,  
while the majorization scheme of the constraints retains feasibility. According to Assumption \ref{assumption:Slater}, we assume the level set ${\rm Lev}(x^0)$ is bounded. Therefore, we can define a constant $C_1>0$ as the diameter of the following set  
\begin{equation}
	\label{defn:C1}
	{\rm diam}\big({\rm Lev}(x^0)\cup\{\hat{x}\}\big)=:C_1<+\infty,
\end{equation} where $\hat{x}$ is the strictly feasible solution stated in Assumption \ref{assumption:Slater}. 
Under the convexity of $(CP)$, Assumption \ref{assum:upper} becomes a natural consequence of Assumption \ref{assumption:Slater}. We state this property as the following theorem.
\begin{lemma}
	\label{dualboundedness}
	Suppose Assumptions \ref{assumption:Slater} and \ref{assumption:convexity} hold, then there exist constants $\varrho,B_0 > 0$ such that for any feasible $x\in{\rm Lev}(x^0)$, the Lagrangian multiplier $\lambda\in\RR^m_+$ associated with the subproblem $(CP_x)$ satisfies $\|\lambda\|_\infty\leq B_0$ as long as $\|z^*(x)-x\|\leq \varrho$.
\end{lemma}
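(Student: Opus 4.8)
The plan is to derive this lemma as a special case of Proposition~\ref{proposition: As.4-2-As.3}: that proposition already shows that if every subproblem admits a strictly feasible point with a uniform margin and a uniformly bounded objective value, then Assumption~\ref{assum:upper} (hence the stated multiplier bound) follows, with $\varrho=+\infty$ and $B_0=(U-F^*)/\tau$. The whole task therefore reduces to producing, for each feasible $x\in{\rm Lev}(x^0)$, a single point that is strictly feasible for $(CP_x)$ with margin independent of $x$ and whose $(CP_x)$-objective is bounded above uniformly. The natural candidate is the convex combination of $x$ with the Slater point $\hat{x}$ of Assumption~\ref{assumption:Slater}, namely $w:=(1-t)x+t\hat{x}$ for a small constant $t\in(0,1)$ to be fixed; since $x,\hat{x}\in\XX$ and $\XX$ is convex, $w\in\XX$.

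First I would verify strict feasibility with a uniform margin. Writing $w-x=t(\hat{x}-x)$ and using convexity of $c_i$ (so that $\nabla c_i(x)^\top(\hat{x}-x)\le c_i(\hat{x})-c_i(x)$), feasibility $c_i(x)\le0$, the Slater margin $c_i(\hat{x})\le-\gamma$ with $\gamma:=\min_{i\in[m]}(-c_i(\hat{x}))>0$, and the diameter bound $\|\hat{x}-x\|\le C_1$ from \eqref{defn:C1}, I obtain
\[
\tilde{c_i}(w\mid x)\le (1-t)c_i(x)+t\,c_i(\hat{x})+\frac{L_i}{1+\kappa_i}t^{1+\kappa_i}C_1^{1+\kappa_i}\le -t\gamma+\frac{L_i}{1+\kappa_i}t^{1+\kappa_i}C_1^{1+\kappa_i}.
\]
Because the negative term is linear in $t$ while the proximal term is superlinear, a fixed $t=t_0$ small enough that $\frac{L_i}{1+\kappa_i}t_0^{\kappa_i}C_1^{1+\kappa_i}\le\gamma/2$ for every $i$ makes the right-hand side at most $-t_0\gamma/2=:-\tau<0$; crucially $t_0,\tau$ depend only on the uniform constants $\gamma,L_i,C_1,\kappa_i$, not on $x$.

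Next I would bound the subproblem objective at $w$. By convexity of $f$ and $r$, the $(CP_x)$-objective satisfies
\[
f(x)+\nabla f(x)^\top(w-x)+\frac{L}{1+\kappa}\|w-x\|^{1+\kappa}+r(w)\le(1-t_0)F(x)+t_0F(\hat{x})+\frac{L}{1+\kappa}t_0^{1+\kappa}C_1^{1+\kappa},
\]
and since $x\in{\rm Lev}(x^0)$ gives $F(x)\le F(x^0)$, the right-hand side is at most a uniform constant $U:=(1-t_0)F(x^0)+t_0F(\hat{x})+\frac{L}{1+\kappa}t_0^{1+\kappa}C_1^{1+\kappa}$. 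With the pair $(U,\tau)$ in hand, Proposition~\ref{proposition: As.4-2-As.3} applies and yields the claim with $\varrho=+\infty$ and $B_0=(U-F^*)/\tau$, which is stronger than the stated conclusion (it holds without any restriction on $\|z^*(x)-x\|$). I would also note that both estimates used only $c_i(x)\le0$, so the argument covers every feasible $x$, not merely strictly feasible ones; indeed the point $w$ certifies Slater's condition for $(CP_x)$ itself.

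The main obstacle---really the only delicate point---is securing a choice of $t_0$, and hence of $\tau$ and $U$, that is uniform across all feasible $x\in{\rm Lev}(x^0)$. This rests on three genuinely uniform quantities: the bounded level set guaranteeing $C_1<+\infty$, the strictly positive Slater margin $\gamma$, and the uniform H\"olderian constants $L_i$. The competition between the linear term $-t\gamma$ and the superlinear proximal term $t^{1+\kappa_i}$ in the feasibility estimate is exactly what allows a fixed small $t_0$ to keep the margin $\tau$ strictly positive; were the proximal exponent linear, this balancing would fail and no uniform $t_0$ could be guaranteed.
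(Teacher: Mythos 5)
Your proof is correct, but it takes a genuinely different route from the paper's. The paper argues directly from the KKT system \eqref{eqn:GHMA-sub-KKT} of $(CP_x)$: it tests the variational inequality at $z=\hat{x}$, uses the complementary-slackness consequence \eqref{bd-1} to control the active constraints, and shows that $\langle\nabla c_i(x)+L_i\|d\|^{\kappa-1}d,\hat{x}-z^*(x)\rangle\le c_i(\hat{x})/2<0$ once $\|d\|\le\varrho$ with $\varrho$ as in \eqref{eqn:cvx-varrho-1}; this is why the paper's statement carries the restriction $\|z^*(x)-x\|\le\varrho$. You instead reduce the lemma to Proposition~\ref{proposition: As.4-2-As.3} by exhibiting, for every feasible $x\in{\rm Lev}(x^0)$, the uniform Slater point $w=(1-t_0)x+t_0\hat{x}$ of the subproblem; your feasibility and objective estimates are correct (the balance between the linear margin $-t\gamma$ and the superlinear proximal term $t^{1+\kappa_i}$ is exactly the right observation, and $C_1$, $\gamma$, $L_i$ are the only uniform inputs needed). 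Your route buys a strictly stronger conclusion ($\varrho=+\infty$, no restriction on the step length, and validity for merely feasible rather than strictly feasible $x$), which would also render the subsequent Corollary~\ref{corollary:LagrangianBound} immediate without the ``finitely many large steps'' argument. One cosmetic point to tidy up: Proposition~\ref{proposition: As.4-2-As.3} is stated for the CMMA subproblem $(P_x)$, whose objective is $\tilde f(z\mid x)+\tfrac{L}{\kappa+1}\|z-x\|^{\kappa+1}+r(z)$, whereas $(CP_x)$ carries a single proximal term; the proposition's proof uses only convexity of the surrogate objective, the fact that it majorizes $F$ (so the optimal subproblem value is at least $F^*$), and complementary slackness, all of which hold verbatim for $(CP_x)$, so you should either note this or restate the proposition for $(CP_x)$ rather than cite it as a black box.
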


\proof 
By the second relationship of \eqref{eqn:GHMA-sub-KKT}, for any $i$ such that $\lambda_i>0$, we must have 
\[
c_i(x) + \nabla c_i(x)^\top d + \frac{L_i}{\kappa+1}\|d\|^{\kappa+1} = 0,
\]
which, together with the convexity of $c_i$,  implies that 
\begin{equation} \label{bd-1}
	\frac{L_i}{\kappa+1}\|d\|^{\kappa+1}
	\geq - c_i(z^*(x)) \ge 0.    
\end{equation}
In addition, by Lemma \ref{lemma:descent-cvx}, we know that $z^*(x)\in{\rm Lev}(x^0)$ as long as $x\in{\rm Lev}(x^0)$. Therefore,  $\|x-z^*(x)\|\leq C_1$, $\|x-\hat{x}\|\leq C_1$, and $\|z^*(x)-\hat{x}\|\leq C_1$, where the constant $C_1$ is defined in \eqref{defn:C1}. Let us denote $\hat{d} = \hat{x}-z^*(x)$. Then, we have the following bound: 
\begin{eqnarray} 
	\big\langle\nabla c_i (x) + L_i\|d\|^{\kappa-1} d,\hat{d}\,\big\rangle
	&=&  \nabla c_i (x)^\top  (\hat{x} - x) 
	- \nabla c_i(x)^\top d 
	+L_i\|d\|^{\kappa-1} d^\top \hat{d}  \nonumber\\
	& \overset{(i)}{\leq} & c_i(\hat{x}) - c_i(x) - \nabla c_i(x)^\top d + L_{i}C_1\|d\|^{\kappa}\nonumber \\
	&\overset{(ii)}{\leq} & c_i(\hat{x}) - c_{i}(z^*(x)) + \frac{L_i}{\kappa+1} \|d\|^{\kappa+1}+L_{i}C_1\|d\|^{\kappa} \nonumber\\
	& \overset{(iii)}{\leq} & c_i(\hat{x}) + 3L_iC_1\|d\|^{\kappa}.\nonumber
\end{eqnarray}
In the above arguments, (i) is due to the convexity of $c_i$, (ii) is due to inequality \eqref{eqn:descent-lemma-Holder}, and (iii) is because of \eqref{bd-1}. 
Now suppose we select $\varrho$ to be small enough so that 
\begin{equation}
	\label{eqn:cvx-varrho-1}
	\varrho \leq \min_{i\in[m]} \Big(\frac{-c_i(\hat{x})}{6L_iC_1}\Big)^{1/\kappa}.
\end{equation}
Then, as long as $\|d\| = \|z^*(x)-x\|\leq \varrho$, we have 
\begin{eqnarray}
	\label{eqn:dualbound-1}
	\big\langle\nabla c_i (x) + L_i\|d\|^{\kappa-1} d,\hat{d}\,\big\rangle
	\leq  c_i(\hat{x})/2 <0. 
\end{eqnarray} 
Similarly, using the convexity of $f$ and $r$, we obtain
\begin{eqnarray}
	\label{eqn:dualbound-2}
	\big\langle\nabla f (x) + l + L\|d\|^{\kappa-1}d,\hat{d}\,\big\rangle
	&=& \nabla f (x)^\top  (\hat x-x) -
	\nabla f (x)^\top d + l^\top\hat{d} + L\|d\|^{\kappa-1}d^\top\hat{d}\nonumber \\
	&\leq& f(\hat x) - f(x)  - 
	\nabla f (x)^\top  d
	+ r(\hat{x}) - r(z^*(x)) + LC_1^{\kappa+1} \\
	&\le& f(\hat x) - f(z^*(x)) +\frac{L}{\kappa+1}\|d\|^{\kappa+1} + r(\hat{x}) - r(z^*(x)) + LC_1^{\kappa+1}\nonumber \\
	&\le& F(\hat{x}) - F(z^*(x)) + 2LC_1^{\kappa+1}.\nonumber
\end{eqnarray}
Now substituting $z = \hat{x}$ into \eqref{eqn:GHMA-sub-KKT} and using the inequalities \eqref{eqn:dualbound-1} and \eqref{eqn:dualbound-2}, we obtain 
$$\sum_{j: \lambda_j>0}-c_j(\hat{x})\lambda_j/2\leq F(\hat{x})-F^* + 2LC_1^{\kappa+1}.$$
Note that $-c_j(\hat{x})>0$ and $\lambda_j>0$ for all summed $j$, while other $\lambda_j=0$. We know 
\begin{equation}
	\label{eqn:cvx-B0}
	\|\lambda\|_\infty\leq \frac{2(F(\hat{x})-F^*)+4LC_1^{\kappa+1}}{\min\{-c_1(\hat{x}),\cdots,-c_m(\hat{x})\}}=:B_0.
\end{equation}
That is, Assumption \ref{assum:upper} holds with small enough $\varrho$ that satisfies \eqref{eqn:cvx-varrho-1} and $B_0$ given by \eqref{eqn:cvx-B0}.
\hfill $\Box$ 

As a direct corollary of Lemma \ref{lemma:descent-cvx} and Lemma \ref{dualboundedness}, we have the following result.
\begin{corollary}
	\label{corollary:LagrangianBound}
	There is a constant $B>0$, so that for every iteration $k\geq 0$, the Lagrangian multiplier $\lambda^{k+1}\in\RR^m_+$ associated with $(CP_{x^k})$ satisfies $0 \le \|\lambda^{k+1}\|_\infty \leq B$. 
\end{corollary}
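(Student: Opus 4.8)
The plan is to split on the size of the step $d^k := x^{k+1}-x^k$ and to combine the two lemmas already in hand: Lemma~\ref{dualboundedness} controls the multiplier whenever the step is short, while Lemma~\ref{lemma:descent-cvx} controls it whenever the step is long. First I would record the elementary consequences of the descent inequality that make this work. Since the proof of Lemma~\ref{dualboundedness} already observes that $z^*(x)\in{\rm Lev}(x^0)$ whenever $x\in{\rm Lev}(x^0)$, induction from the strictly feasible $x^0$ shows every iterate $x^k$ lies in ${\rm Lev}(x^0)$ and is therefore feasible; in particular $F(x^{k+1})\geq F^*$. Because Lemma~\ref{lemma:descent-cvx} forces $\{F(x^k)\}$ to be nonincreasing with $F(x^0)$ as its largest value, each single-step decrease satisfies $F(x^k)-F(x^{k+1})\leq F(x^0)-F^*=\Delta$.

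With $\varrho$ and $B_0$ as in Lemma~\ref{dualboundedness} and $L_{\min}:=\min_{i\in[m]}L_i>0$, I would then argue by cases. If $\|d^k\|\leq\varrho$, then since $x^k$ is feasible and lies in ${\rm Lev}(x^0)$, Lemma~\ref{dualboundedness} applies verbatim and gives $\|\mu^{k+1}\|_\infty\leq B_0$. If instead $\|d^k\|>\varrho$, I would invoke the descent inequality of Lemma~\ref{lemma:descent-cvx}, drop the nonnegative $L$-term, and use $\|d^k\|^{\kappa+1}>\varrho^{\kappa+1}$ to obtain
\[
\frac{\kappa}{\kappa+1}\,\varrho^{\kappa+1}\sum_{i=1}^m\mu_i^{k+1}L_i \;\leq\; \frac{\kappa}{\kappa+1}\Big(\sum_{i=1}^m\mu_i^{k+1}L_i\Big)\|d^k\|^{\kappa+1} \;\leq\; F(x^k)-F(x^{k+1})\;\leq\;\Delta.
\]
Since the multipliers are nonnegative and each $L_i\geq L_{\min}$, this yields $\|\mu^{k+1}\|_\infty\leq\sum_i\mu_i^{k+1}\leq L_{\min}^{-1}\sum_i\mu_i^{k+1}L_i\leq \frac{(\kappa+1)\Delta}{\kappa L_{\min}\varrho^{\kappa+1}}$. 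Taking $B:=\max\big\{B_0,\ \frac{(\kappa+1)\Delta}{\kappa L_{\min}\varrho^{\kappa+1}}\big\}$ then bounds $\|\mu^{k+1}\|_\infty$ uniformly over all $k\geq0$, which is the claim.

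The only real point—everything else being bookkeeping—is the observation underlying the long-step case: a simultaneously large multiplier and large step are impossible because, through Lemma~\ref{lemma:descent-cvx}, they would charge a single iteration with an objective decrease exceeding the total budget $\Delta$. This is precisely the regime that Lemma~\ref{dualboundedness} leaves open, so no additional structural assumption is needed beyond the monotone, bounded-below descent already established.
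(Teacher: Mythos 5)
Your proof is correct, and it improves on the paper's argument in the one place where the two diverge. The paper's (omitted) proof uses the same two-case split on $\|d^k\|$ versus $\varrho$ and the same appeal to Lemma~\ref{dualboundedness} for short steps, but for the long-step case it argues non-constructively: the $L\|d^k\|^{\kappa+1}$ portion of the descent in Lemma~\ref{lemma:descent-cvx} shows that $\|d^k\|\geq\varrho$ can occur only finitely often, and $B$ is then taken as the maximum of $B_0$ and the finitely many exceptional values $\|\mu^{k+1}\|_\infty$, each of which is finite merely because the subproblem's KKT multipliers exist. You instead exploit the fact that the multiplier itself appears in the descent coefficient $L+\sum_i\mu_iL_i$: a long step with a large multiplier would charge a single iteration with a decrease exceeding the total budget $\Delta=F(x^0)-F^*$, which gives the explicit bound $\sum_i\mu_i^{k+1}\leq\frac{(\kappa+1)\Delta}{\kappa L_{\min}\varrho^{\kappa+1}}$. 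Your route buys an a priori computable constant $B$ depending only on $\Delta,\kappa,\varrho,L_{\min},B_0$ rather than on the realized trajectory of the algorithm, at the cost of the (harmless) extra observations that every iterate stays in ${\rm Lev}(x^0)$ and that each single-step decrease is at most $\Delta$; both are justified exactly as you state them, via the monotonicity from Lemma~\ref{lemma:descent-cvx} and the feasibility of $x^{k+1}$ guaranteed by the majorization.
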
 
The proof is straightforward in that the sufficient descent in Lemma \ref{lemma:descent-cvx} guarantees  $\|d^k\|\geq\varrho$ to happen at most finitely. Therefore, one can set $B$ as the maximum between  $B_0$ and the finitely many $\|\lambda^{k+1}\|_\infty$'s that correspond to $\|d^k\|\geq \varrho$. The proof is omitted for succinctness.

For convex optimization $(CP)$, beyond the iterative sufficient descent in Lemma \ref{lemma:descent-cvx}, we further provide the following bound on the function value optimality gap, which, together with Corollary~\ref{corollary:LagrangianBound}, provides the final complexity result. 

\begin{lemma} \label{lemma:opt-gap}
	Under the same setting of Lemma \ref{lemma:descent-cvx}, and let $x^*$ be the optimal solution of $(CP)$, then
	\begin{equation}
		\label{lm:opt-gap-main-1} 
		F(z^*(x)) - F(x^*) \leq \frac{L+\!\sum_{i=1}^m\lambda_iL_i}{2}\!\cdot\!\|d\|^{\kappa-1}\!\cdot\! \Big(\frac{1-\kappa}{1+\kappa}\|d\|^2 + \|x-x^*\|^2 - \|z^*(x)-x^*\|^2\Big).
	\end{equation}
	Meanwhile, with upper bound $C_1$ as given in \eqref{defn:C1}, we also have
	\begin{equation}
		\label{lm:opt-gap-main-2}
		F(z^*(x))-F(x^*) \leq \Big(L + \sum_{i=1}^m \lambda_i L_i  \Big)C_1\|d\|^{\kappa}, 
	\end{equation} 
	where $d = z^*(x)-x$ denotes the algorithmic residual of the subproblem $(CP_x)$.
\end{lemma}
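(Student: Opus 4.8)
The plan is to bound $F(z^*(x)) - F(x^*)$ by first converting the objective gap into a single inner product of the form $(\nabla f(x)+l)^\top(z^*(x)-x^*)$, and then eliminating that inner product using the first-order optimality condition of the subproblem. Writing $d = z^*(x)-x$ and taking $l\in\partial r(z^*(x))$ as in \eqref{eqn:GHMA-sub-KKT}, I would apply the $(L,\kappa)$-H\"olderian descent inequality \eqref{eqn:descent-lemma-Holder} to $f(z^*(x))$ together with the convexity lower bounds $f(x^*)\ge f(x)+\nabla f(x)^\top(x^*-x)$ and $r(x^*)\ge r(z^*(x))+l^\top(x^*-z^*(x))$. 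Using $d+(x-x^*)=z^*(x)-x^*$, these combine into
\[
F(z^*(x)) - F(x^*) \le (\nabla f(x)+l)^\top(z^*(x)-x^*) + \frac{L}{\kappa+1}\|d\|^{\kappa+1}.
\]

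Next I would substitute $z=x^*$ into the variational inequality (first line of \eqref{eqn:GHMA-sub-KKT}), obtaining $\langle g, z^*(x)-x^*\rangle\le 0$ for the aggregated gradient $g = \nabla f(x)+l+L\|d\|^{\kappa-1}d+\sum_i\mu_i\big(\nabla c_i(x)+L_i\|d\|^{\kappa-1}d\big)$. Rearranging bounds $(\nabla f(x)+l)^\top(z^*(x)-x^*)$ by the remaining constraint and proximal directions; the only nontrivial term is $-\sum_i\mu_i\nabla c_i(x)^\top(z^*(x)-x^*)$. I would control it termwise: for each $i$ with $\mu_i>0$, complementary slackness (second line of \eqref{eqn:GHMA-sub-KKT}) gives $\nabla c_i(x)^\top d = -c_i(x)-\frac{L_i}{\kappa+1}\|d\|^{\kappa+1}$, while convexity of $c_i$ and feasibility $c_i(x^*)\le0$ give $\nabla c_i(x)^\top(x-x^*)\ge c_i(x)-c_i(x^*)\ge c_i(x)$; adding these makes the $c_i(x)$ terms cancel, leaving $\nabla c_i(x)^\top(z^*(x)-x^*)\ge -\frac{L_i}{\kappa+1}\|d\|^{\kappa+1}$. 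Since $\mu_i\ge0$, this yields $-\sum_i\mu_i\nabla c_i(x)^\top(z^*(x)-x^*)\le\frac{\sum_i\mu_iL_i}{\kappa+1}\|d\|^{\kappa+1}$. Collecting every piece and setting $A:=L+\sum_i\mu_iL_i$, all terms consolidate into the single master inequality
\[
F(z^*(x)) - F(x^*) \le A\,\|d\|^{\kappa-1}\Big(-d^\top(z^*(x)-x^*) + \tfrac{1}{\kappa+1}\|d\|^2\Big).
\]

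From this master inequality both claimed bounds follow by elementary manipulation. For \eqref{lm:opt-gap-main-1}, I would apply the polarization identity $d^\top(z^*(x)-x^*)=\tfrac12\big(\|d\|^2+\|z^*(x)-x^*\|^2-\|x-x^*\|^2\big)$ and combine the $\|d\|^2$ coefficients via $\tfrac{1}{\kappa+1}-\tfrac12=\tfrac{1-\kappa}{2(\kappa+1)}$, which reproduces the stated right-hand side exactly. For \eqref{lm:opt-gap-main-2}, I would instead rewrite $-d^\top(z^*(x)-x^*)+\tfrac{1}{\kappa+1}\|d\|^2 = -\tfrac{\kappa}{\kappa+1}\|d\|^2 - d^\top(x-x^*)$, discard the nonpositive term $-\tfrac{\kappa}{\kappa+1}\|d\|^2$, and apply Cauchy--Schwarz with $\|x-x^*\|\le C_1$ (valid since $x,x^*\in{\rm Lev}(x^0)$, whose diameter is at most $C_1$ by \eqref{defn:C1}) to get $-d^\top(x-x^*)\le C_1\|d\|$, hence $F(z^*(x))-F(x^*)\le A C_1\|d\|^{\kappa}$.

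The main obstacle is the constraint-term bound in the second paragraph: one must combine complementary slackness, convexity of $c_i$, and feasibility of $x^*$ in exactly the right way so that the $c_i(x)$ contributions cancel and leave only the clean residual $\frac{L_i}{\kappa+1}\|d\|^{\kappa+1}$; everything else is bookkeeping of signs and the two algebraic identities. Note that the usefulness of these bounds as $O(\cdot)$ estimates relies on $A$ being uniformly bounded, which is not claimed here but is supplied separately through the boundedness of $\mu$ in Corollary~\ref{corollary:LagrangianBound}.
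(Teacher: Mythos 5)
Your proposal is correct and follows essentially the same route as the paper: substitute $z=x^*$ into the subproblem's variational inequality, combine with the H\"olderian descent inequality and convexity of $f$ and $r$, absorb the constraint term via complementary slackness plus feasibility of $x^*$, and finish with the polarization identity for \eqref{lm:opt-gap-main-1} and a Cauchy--Schwarz/diameter bound for \eqref{lm:opt-gap-main-2}. The only differences are cosmetic (you handle the constraint term per-index via the equality form of complementary slackness, and your derivation of \eqref{lm:opt-gap-main-2} discards a nonpositive $\|d\|^2$ term rather than bounding two norms separately, which is if anything slightly cleaner).
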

\proof 
First, substituting and $z = x^*$ in the first equality of \eqref{eqn:GHMA-sub-KKT} and rearranging the terms gives 
\begin{equation} 
	\label{lm:opt-gap-1}
	\langle \nabla f(x) + l,x^*-z^*(x)\rangle + \sum_i\lambda_i\nabla c_i(x)^\top(x^*-z^*(x)) + \Big(L+\sum_{i=1}^m\lambda_iL_i\Big)\|d\|^{\kappa-1}d^\top(x^*-z^*(x)) \geq 0, 
\end{equation}
where $d = z^*(x)-x$ and $l\in\partial r(z^*(x))$. Using the convexity of $r$ and $f$, we obtain
\begin{equation} 
	\label{lm:opt-gap-2}
	r(x^*) \geq r(z^*(x)) + l^\top(x^*-z^*(x))\qquad\mbox{and}\qquad f(x^*) \geq f(x) + \nabla f(x)^\top(x^*-x). 
\end{equation}
Summing up inequalities \eqref{lm:opt-gap-1}, \eqref{lm:opt-gap-2}, and \eqref{lm:descent-cvx-2}, and then rearranging the terms yields 
\begin{equation} 
	\label{lm:opt-gap-3}
	F(z^*(x)) - F(x^*) \leq \underbrace{\sum_{i=1}^m\lambda_i\nabla c_i(x)^\top(x^*-z^*(x))}_{T_1} + \underbrace{\Big(L+\!\sum_{i=1}^m\lambda_iL_i\Big)\|d\|^{\kappa-1}d^\top(x^*-z^*(x))}_{T_2} + \frac{L\|d\|^{\kappa+1}}{\kappa+1}. 
\end{equation} 
To bound the term $T_1$, let us add the complementary slackness terms in \eqref{eqn:GHMA-sub-KKT} to $T_1$, then we have 
\begin{eqnarray*}
	T_1 & = & T_1 + \sum_{i=1}^m\lambda_i\cdot\Big(c_i(x) + \nabla c_i(x)^\top d + \frac{L_i}{\kappa+1}\|d\|^{\kappa+1}\Big)\\
	& = & \sum_{i=1}^m\lambda_i\cdot\Big(c_i(x) + \nabla c_i(x)^\top (x^*-x) + \frac{L_i}{\kappa+1}\|d\|^{\kappa+1}\Big)\\
	& \leq & \sum_{i=1}^m \frac{\lambda_iL_i}{\kappa+1}\|d\|^{\kappa+1}  
\end{eqnarray*}
where the last inequality is due to the convexity of $c_i$, the feasibility of $x^*$ and $\lambda\geq0$, which implies 
$$\sum_{i=1}^m\lambda_i\Big(c_i(x) + \nabla c_i(x)^\top (x^*-x)\Big) \leq \sum_{i=1}^m\lambda_ic_i(x^*)\leq 0.$$
To bound $T_2$, it suffices to use the identity  
$$(a-b)^\top(c-d) = \frac{1}{2}\Big(\|a-d\|^2-\|a-c\|^2\Big) + \frac{1}{2}\Big(\|b-c\|^2-\|b-d\|^2\Big)$$
that holds for any vectors $a,b,c,d$, which indicates that  
$$T_2 = \frac{1}{2}\Big(L+\sum_{i=1}^m\lambda_iL_i\Big)\cdot\|d\|^{\kappa-1}\Big(\|x-x^*\|^2 - \|z^*(x)-x^*\|^2 - \|d\|^2\Big).$$
Substituting the bounds on $T_1$ and $T_2$ gives 
\begin{eqnarray*}
	&&F(z^*(x)) - F(x^*) \\
	&\leq &\frac{L+\!\sum_{i=1}^m\lambda_iL_i}{2}\!\cdot\!\|d\|^{\kappa-1}\!\cdot\!\underbrace{\Big(\frac{1-\kappa}{1+\kappa}\|x - z^*(x)\|^2 + \|x-x^*\|^2 - \|z^*(x)-x^*\|^2\Big)}_{T_3}\nonumber
\end{eqnarray*}
where the term $T_3$ can be further upper bounded by 
\begin{eqnarray} 
	T_3 & = & (x  - z^*(x))^\top \Big(x + z^*(x) - 2x^*+\frac{1-\kappa}{1+\kappa}\big(x  - z^*(x)\big)\Big)\nonumber \\
	& = & -d^\top \Big(\frac{2}{1+\kappa}(x-x^*)+\frac{2\kappa}{1+\kappa}\big(z^*(x)-x^*\big)\Big)\nonumber\\
	& \leq & \|d\| \Big(\frac{2}{1+\kappa}\|x -x^*\|+\frac{2\kappa}{1+\kappa}\|z^*(x) -x^*\|\Big)\nonumber\\
	& \le & 2 C_1 \|d\|.\nonumber
\end{eqnarray}
Substituting the bound on $T_3$ to \eqref{lm:opt-gap-3} proves the lemma.  \hfill $\Box$

Combining Lemma \ref{lemma:descent-cvx}, Lemma \ref{lemma:opt-gap}, and Corollary \ref{corollary:LagrangianBound}, we arrive at the final theorem. 

\begin{theorem}
	\label{theorem:Cvg-CVX}
	Suppose Assumptions \ref{assumption:Slater}, \ref{assumption:Holder-Grad} and \ref{assumption:convexity} hold, and the iteration sequence $\{x^k\}_{k\geq0}$ is generated by GHMA. Then $\{x^k\}_{k\geq0}$ are all feasible to $(CP)$, and  for any $K\geq 1$, we have 
	$$F(x^K)-F(x^*)\leq \frac{C_1^{1-\kappa}L_B^{\frac{1-\kappa}{\kappa+1}}}{2^\kappa}\cdot\Big(\frac{F(x^0)-F(x^*)}{\kappa K}\Big)^{\frac{2\kappa}{\kappa+1}} + \frac{C_1^{1-\kappa}L_B\|x^0-x^*\|^{2\kappa}}{2^\kappa K^\kappa},$$
	where $L_B = L+\sum_{i=1}^mBL_i$. Therefore, it takes GHMA at most $O\big(\frac{L_B^{1/\kappa}\|x^0-x^*\|^2}{\epsilon^{1/\kappa}}\big)$ iterations to obtain a feasible solution with $\epsilon$-small optimality gap. 
\end{theorem}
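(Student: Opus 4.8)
The plan is to assemble the three tools already established for GHMA: the per-iteration sufficient descent of Lemma~\ref{lemma:descent-cvx}, the two optimality-gap estimates of Lemma~\ref{lemma:opt-gap}, and the uniform multiplier bound of Corollary~\ref{corollary:LagrangianBound}. First I would dispose of the two structural facts needing no new work. Feasibility of every iterate holds by induction: the surrogate constraints in \eqref{eqn:Holder-surrogate} majorize $c_i$ and are enforced at $x^{k+1}=z^*(x^k)$, so $x^{k+1}$ is feasible whenever $x^k$ is. Monotonicity $F(x^{k+1})\le F(x^k)$ is immediate from Lemma~\ref{lemma:descent-cvx}, hence $F(x^K)\le F(x^{k+1})$ for every $k\le K-1$. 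Writing $A_k:=L+\sum_{i=1}^m\mu_i^{k+1}L_i$, Corollary~\ref{corollary:LagrangianBound} lets me replace $A_k$ by the uniform constant $L_B=L+\sum_{i=1}^m BL_i\ge A_k$ wherever an upper bound is required, while $A_k\ge L$ is available when a lower bound is needed.

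Next I would extract two consequences. Telescoping Lemma~\ref{lemma:descent-cvx} gives $\sum_{k=0}^{K-1}\frac{\kappa}{\kappa+1}A_k\|d^k\|^{\kappa+1}\le F(x^0)-F^*=:\Delta$, so with $A_k\ge L$ the residuals are summable, $\sum_{k=0}^{K-1}\|d^k\|^{\kappa+1}\le\frac{(\kappa+1)\Delta}{\kappa L}$; this is what will feed the transient term. The core is estimate \eqref{lm:opt-gap-main-1}: dividing it by $A_k$, multiplying by $\|d^k\|^{1-\kappa}$, and writing $R_k:=\|x^k-x^*\|^2$, I obtain the one-step inequality $\frac{2}{A_k}\|d^k\|^{1-\kappa}\big(F(x^{k+1})-F^*\big)\le\frac{1-\kappa}{1+\kappa}\|d^k\|^2+R_k-R_{k+1}$. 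Summing over $k$ telescopes the distance terms into $R_0-R_K\le\|x^0-x^*\|^2$.

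To convert the summed left-hand side into a bound on $F(x^K)-F^*$, I would lower-bound each summand using the companion estimate \eqref{lm:opt-gap-main-2}, which yields $\|d^k\|^{1-\kappa}\ge\big((F(x^{k+1})-F^*)/(A_kC_1)\big)^{(1-\kappa)/\kappa}$; combined with $A_k\le L_B$ and monotonicity this gives left-hand side $\ge\frac{2K}{L_B^{1/\kappa}C_1^{(1-\kappa)/\kappa}}\big(F(x^K)-F^*\big)^{1/\kappa}$. Rearranging produces $\big(F(x^K)-F^*\big)^{1/\kappa}\le\frac{L_B^{1/\kappa}C_1^{(1-\kappa)/\kappa}}{2K}\big(\|x^0-x^*\|^2+\frac{1-\kappa}{1+\kappa}\sum_k\|d^k\|^2\big)$. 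Raising to the power $\kappa$ and using subadditivity $(a+b)^\kappa\le a^\kappa+b^\kappa$ splits the estimate into a distance-driven piece, reproducing the $C_1^{1-\kappa}L_B\|x^0-x^*\|^{2\kappa}/(2^\kappa K^\kappa)$ term, and a residual-driven piece; controlling $\sum_k\|d^k\|^2$ via the summability $\sum\|d^k\|^{\kappa+1}\lesssim\Delta/L$ together with the interpolation $\|d^k\|^2=(\|d^k\|^{\kappa+1})^{2/(\kappa+1)}$ is what produces the faster-decaying $\Delta^{2\kappa/(\kappa+1)}K^{-2\kappa/(\kappa+1)}$ term. Inverting the dominant $K^{-\kappa}$ term then yields the $O\big(L_B^{1/\kappa}\|x^0-x^*\|^2\epsilon^{-1/\kappa}\big)$ complexity.

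The main obstacle is the weight $\|d^k\|^{\kappa-1}$ carried by \eqref{lm:opt-gap-main-1}, which blows up exactly as $\|d^k\|\to0$, so the distance terms cannot be telescoped with a frozen coefficient as in the Lipschitz case $\kappa=1$. The device that rescues this is to use the two estimates of Lemma~\ref{lemma:opt-gap} in tandem: \eqref{lm:opt-gap-main-1} exposes the telescoping potential $R_k$, while \eqref{lm:opt-gap-main-2} supplies the matching lower bound on $\|d^k\|$, so that the $\|d^k\|^{1-\kappa}$ factor converts each gap $F(x^{k+1})-F^*$ into the power $(F(x^{k+1})-F^*)^{1/\kappa}$ with a uniform constant. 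The remaining delicate bookkeeping is the sharp control of $\sum_k\|d^k\|^2$: since this sum is only bounded by a constant in the worst case, obtaining the stated first term rather than a second $K^{-\kappa}$ term requires tracking the coupling between the residual sum on the right and the gap powers on the left, and this is the step I expect to demand the most care.
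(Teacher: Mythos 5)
Your overall route is the paper's route: the same three ingredients (Lemma~\ref{lemma:descent-cvx}, the two estimates of Lemma~\ref{lemma:opt-gap}, Corollary~\ref{corollary:LagrangianBound}), and the same key device of pairing \eqref{lm:opt-gap-main-1} with the lower bound on $\|d^k\|$ from \eqref{lm:opt-gap-main-2} so that the singular weight $\|d^k\|^{\kappa-1}$ can be replaced by a uniform coefficient, turning the telescoped distance terms into a bound on $\Delta_K^{1/\kappa}$, followed by the subadditivity $(a+b)^\kappa\le a^\kappa+b^\kappa$. All of the steps you carry out are valid: the division by $\tfrac{A_k}{2}\|d^k\|^{\kappa-1}$ preserves the inequality, the monotone lower bound on each summand is correct, and $\sum_k\|d^k\|^2\le\big(\sum_k\|d^k\|^{\kappa+1}\big)^{2/(\kappa+1)}$ does follow from superadditivity of $t\mapsto t^{2/(\kappa+1)}$. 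This already yields $\Delta_K\le O(K^{-\kappa})$ and hence the claimed $O\big(L_B^{1/\kappa}\|x^0-x^*\|^2\epsilon^{-1/\kappa}\big)$ complexity, which is the substantive conclusion of the theorem.

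However, you do not obtain the exact displayed inequality, and you correctly flag this yourself at the end: by dividing out the weight before summing, your transient term becomes $\frac{L_B^{1/\kappa}C_1^{(1-\kappa)/\kappa}}{2K}\cdot\frac{1-\kappa}{1+\kappa}\sum_k\|d^k\|^2$, which after the summability bound and raising to the power $\kappa$ decays only as $K^{-\kappa}$, not as the stated $K^{-2\kappa/(\kappa+1)}$. The missing idea is a bootstrap. The paper keeps that term in its weighted form, bounding it via Lemma~\ref{lemma:descent-cvx} as $\frac{1-\kappa}{2\kappa K}\Delta_0$ so that after multiplying through by $\Delta_K^{1/\kappa-1}$ it reads $\frac{1-\kappa}{2\kappa K}\Delta_0\Delta_K^{1/\kappa-1}$, i.e.\ it stays coupled to $\Delta_K$. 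It then derives a separate crude a priori decay estimate, $\Delta_K\le L_B^{1/(\kappa+1)}C_1\big(\tfrac{(\kappa+1)\Delta_0}{\kappa K}\big)^{\kappa/(\kappa+1)}$, by choosing the index $s$ minimizing $L_\mu^k\|d^k\|^{\kappa+1}$ over $k\le K-1$ and applying \eqref{lm:opt-gap-main-2} at $x^s$; substituting this into $\Delta_K^{1/\kappa-1}$ upgrades the coupled term to $O\big(K^{-2/(\kappa+1)}\big)$ before the final power of $\kappa$ is taken. If you need only the complexity order, your shortcut suffices; to reproduce the theorem's precise first term you must add this two-stage argument.
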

\proof First, by Lemma \ref{lemma:descent-cvx}, we know $F(x^k)$ is monotonically decreasing. Then let us sum up the inequalities \eqref{lm:opt-gap-main-1} from Lemma \ref{lemma:opt-gap} with $x = x^k, z^*(x) = x^{k+1}$ and $\lambda = \lambda^{k+1}$. To simplify the notation, let us denote the function gap as $\Delta_k = F(x^k)-F(x^*)$,  the weighted Lipschitz constants $L_\lambda^k:=L+\sum_{i=1}^m\lambda_i^{k+1}L_i$ and $L_B:=L+\sum_{i=1}^mBL_i$, the algorithmic residual $d^k = x^{k+1}-x^k$, then we obtain the following inequality on the optimality gap
\begin{eqnarray}
	\label{thm:Cvg-CVX-1}
	\Delta_K &\leq&  \frac{1}{K}\sum_{k=0}^{K-1}\Delta_{k+1} \nonumber\\
	& \leq &  \frac{1}{K}\sum_{k=0}^{K-1} \frac{L_\lambda^k}{2}\!\cdot\!\|d^k\|^{\kappa-1}\!\cdot\!\Big(\frac{1-\kappa}{1+\kappa}\|d^k\|^2 + \|x^k-x^*\|^2 - \|x^{k+1}-x^*\|^2\Big) \\
	& = & \underbrace{\frac{1-\kappa}{2K} \sum_{k=0}^{K-1} \frac{L_\lambda^k\|d^k\|^{\kappa+1}}{1+\kappa}}_{E_1} + \underbrace{\frac{1}{2K}\sum_{k=0}^{K-1} L_\lambda^k\|d^k\|^{\kappa-1}\cdot(\|x^k-x^*\|^2 - \|x^{k+1}-x^*\|^2)}_{E_2}\nonumber.
\end{eqnarray}
By Lemma \ref{lemma:descent-cvx}, summing up its descent result with $x = x^k, z^*(x) = x^{k+1}$ and $\lambda = \lambda^{k+1}$ gives 
$$E_1\leq \frac{1-\kappa}{2\kappa K}\big(F(x^0)-F(x^K)\big)\leq \frac{1-\kappa}{2\kappa K}\Delta_0.$$
For the term $E_2$,  the following inequality holds 
\begin{eqnarray*}
	E_2 & \overset{(i)}{\leq} & \frac{1}{2K} \sum_{k=0}^{K-1} L_\lambda^k\Big(\frac{L_\lambda^kC_1}{\Delta_{k+1}}\Big)^{ 1/\kappa-1}\cdot(\|x^k-x^*\|^2 - \|x^{k+1}-x^*\|^2)\\
	& \overset{(ii)}{\leq} & \frac{1}{2K} \sum_{k=0}^{K-1} L_B^{{1}/{\kappa}}(C_1/\Delta_K)^{{1}/{\kappa}-1}\big(\|x^k-x^*\|^2 - \|x^{k+1}-x^*\|^2\big)\\
	& = & \frac{L_B^{{1}/{\kappa}}(C_1/\Delta_K)^{{1}/{\kappa}-1}}{2K} (\|x^0-x^*\|^2 - \|x^{K}-x^*\|^2)\\
	& {\leq} & \frac{L_B^{{1}/{\kappa}}(C_1)^{{1}/{\kappa}-1}}{2K\Delta_K^{{1}/{\kappa}-1}}\cdot\|x^0-x^*\|^2.
\end{eqnarray*}
In the above argument, (i) is because the inequality \eqref{lm:opt-gap-main-2} of Lemma \ref{lemma:opt-gap} indicates that $\|d^k\|\geq \big(\frac{\Delta_{k+1}}{L_\lambda^kC_1}\big)^{1/\kappa}$, together with the fact that $\kappa\in(0,1]$ and $\kappa-1\leq 0$, we can use $\Delta_{k+1}$ to provide an upper bound on $\|d^k\|^{\kappa-1}$. (ii) is because Corollary \ref{corollary:LagrangianBound} indicates that $\|\lambda^k\|_\infty \leq B$ for any $k\geq0$, which further implies that $L_\lambda^k\leq L_B$ for all $k\geq 0.$ We also use the monotonicity of $F(x^k)$ which indicates that $\Delta_k\geq \Delta_K$ for all $k\leq K$. Overall, substituting the bounds on $E_1$ and $E_2$ to \eqref{thm:Cvg-CVX-1} and then multiply $\Delta_K^{{1}/{\kappa}-1}$ on both sides, we obtain 
\begin{equation}
	\label{thm:Cvg-CVX-2}
	\Delta_K^{{1}/{\kappa}}\leq \frac{1-\kappa}{2\kappa K}\Delta_0\Delta_K^{{1}/{\kappa}-1} + \frac{L_B^{{1}/{\kappa}}(C_1)^{{1}/{\kappa}-1}}{2K}\cdot\|x^0-x^*\|^2.
\end{equation} 
Let us set $s := \argmin_{k} \left\{L_\lambda^k\|d^k\|^{\kappa+1} : 0\leq k\leq K-1\right\}$, then summing up the descent inequality of Lemma \ref{lemma:descent-cvx} with $x = x^k, z^*(x) = x^{k+1}$ and $\lambda = \lambda^{k+1}$ gives
$$L_\lambda^{s}\|d^s\|^{\kappa+1}\leq \frac{1}{K}\sum_{k=0}^{K-1}L_\lambda^{k}\|d^k\|^{\kappa+1} \leq \frac{\kappa+1}{\kappa K}\Delta_0.$$
Again, by the inequality \eqref{lm:opt-gap-main-2} of Lemma \ref{lemma:opt-gap}, we have
$$\Delta_K\leq \Delta_{s+1} \leq L_\lambda^sC_1\|d^s\|^\kappa \leq \Big(\frac{(\kappa+1)\Delta_0}{\kappa K}\Big)^{\frac{\kappa}{\kappa+1}}(L_B)^{\frac{1}{\kappa+1}}C_1.$$
Substituting this bound to the right hand side of \eqref{thm:Cvg-CVX-2} gives 
$$\Delta_K^{{1}/{\kappa}}\leq \frac{L_B^{\frac{1-\kappa}{\kappa(\kappa+1)}}C_1^{1/\kappa-1}}{2}\cdot\Big(\frac{\Delta_0}{\kappa K}\Big)^{\frac{2}{\kappa+1}} + \frac{L_B^{{1}/{\kappa}}C_1^{{1}/{\kappa}-1}}{2K}\cdot\|x^0-x^*\|^2.$$
Note that for any $a,b>0$ and $\kappa\in(0,1]$, it holds that $(a+b)^\kappa\leq a^\kappa+b^\kappa$. Hence we obtain
$$\Delta_K\leq \frac{L_B^{\frac{1-\kappa}{\kappa+1}}C_1^{1-\kappa}}{2^\kappa}\cdot\Big(\frac{\Delta_0}{\kappa K}\Big)^{\frac{2\kappa}{\kappa+1}} + \frac{L_BC_1^{1-\kappa}}{2^\kappa K^\kappa}\cdot\|x^0-x^*\|^{2\kappa} \leq O\Big(\frac{1}{K^\kappa}\Big).$$ 
This completes the proof. \hfill $\Box$

Under the general convexity condition and gradient H\"olderian continuity, an $O\big({1}/{K^\kappa}\big)$ sublinear convergence can be achieved by GHMA. If in addition, $\kappa=\kappa_1=\cdots=\kappa_m = 1$, and a strong convexity condition is satisfied, then a fast linear convergence can be achieved. 

\begin{theorem} \label{theorem:Cvg-SCVX}
	Suppose Assumptions \ref{assumption:Slater}, \ref{assumption:Holder-Grad} and \ref{assumption:convexity} hold. In addition, assume that $\kappa=1$ and $f$ is $\alpha$-strongly convex, then the iteration sequence $\{x^k\}_{k\geq0}$ generated by GHMA satisfies
	$$\frac{\alpha}{2}\|x^{k}-x^*\|^2  + \big(1-\rho^k\big)\big(F(x^k)-F(x^*)\big) \leq \frac{\alpha\rho^k}{2}\|x^0-x^*\|^2, \quad \forall k\geq0,$$
	where $L_B = L+B\sum_iL_i$ and $\rho = 1-\alpha/L_B$. Therefore, it takes GHMA at most $O\big(\frac{L_B}{\alpha}\log\big(\frac{\alpha\|x^0-x^*\|^2}{\epsilon}\big)\big)$ iterations to obtain a feasible solution with $\epsilon$-small optimality gap. 
\end{theorem}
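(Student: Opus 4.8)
The plan is to specialize the optimality-gap analysis of Lemma~\ref{lemma:opt-gap} to $\kappa=1$, upgrade the convexity of $f$ to $\alpha$-strong convexity, and then telescope a clean one-step contraction. First I would re-run the derivation of \eqref{lm:opt-gap-3}, replacing the convexity bound $f(x^*)\ge f(x)+\nabla f(x)^\top(x^*-x)$ by the strong-convexity bound $f(x^*)\ge f(x)+\nabla f(x)^\top(x^*-x)+\frac{\alpha}{2}\|x^*-x\|^2$. Writing $\Delta_k:=F(x^k)-F(x^*)$, $d^k=x^{k+1}-x^k$, and $L_\mu^k=L+\sum_i\mu_i^{k+1}L_i$, and exploiting the exact cancellation of the $\|d^k\|^2$ terms that occurs at $\kappa=1$ (the term $-\frac{L_\mu^k}{2}\|d^k\|^2$ produced by $T_2$ is matched by the bound on $T_1$ plus the leftover $\frac{L}{2}\|d^k\|^2$), this yields the sharpened one-step estimate
\[
\Delta_{k+1}\;\le\; -\frac{\alpha}{2}\|x^k-x^*\|^2+\frac{L_\mu^k}{2}\Big(\|x^k-x^*\|^2-\|x^{k+1}-x^*\|^2\Big).
\]

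Next, since $\Delta_{k+1}\ge0$ and $\frac{\alpha}{2}\|x^k-x^*\|^2\ge0$, the estimate forces $\|x^{k+1}-x^*\|\le\|x^k-x^*\|$, so the difference $\|x^k-x^*\|^2-\|x^{k+1}-x^*\|^2$ is nonnegative and I may replace $L_\mu^k$ by its uniform upper bound $L_B=L+\sum_iBL_i$ supplied by Corollary~\ref{corollary:LagrangianBound}. Setting $\rho=1-\alpha/L_B$ (note $\alpha\le L\le L_B$ so $\rho\in[0,1)$) and rearranging gives, with $u_k:=\frac{L_B}{2}\|x^k-x^*\|^2$, the one-step contraction $u_{k+1}+\Delta_{k+1}\le\rho\,u_k$. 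Iterating this contraction and collecting the peeled-off gap terms telescopes to
\[
u_k+\sum_{j=1}^{k}\rho^{\,k-j}\Delta_j\;\le\;\rho^k u_0.
\]

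The key step is then to convert the weighted sum $\sum_{j=1}^k\rho^{k-j}\Delta_j$ into the factor $(1-\rho^k)\Delta_k$ appearing in the statement. For this I would invoke the monotonicity $\Delta_1\ge\cdots\ge\Delta_k$ guaranteed by the descent Lemma~\ref{lemma:descent-cvx}, so that $\sum_{j=1}^k\rho^{k-j}\Delta_j\ge\Delta_k\sum_{i=0}^{k-1}\rho^i=\frac{1-\rho^k}{1-\rho}\Delta_k$. Substituting this lower bound, multiplying through by $1-\rho=\alpha/L_B$, and recalling $u_k=\frac{L_B}{2}\|x^k-x^*\|^2$ produces exactly $\frac{\alpha}{2}\|x^k-x^*\|^2+(1-\rho^k)\Delta_k\le\frac{\alpha\rho^k}{2}\|x^0-x^*\|^2$. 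Feasibility of every $x^k$ is inherited from the majorization property of the surrogate constraints exactly as in Theorem~\ref{theorem:Cvg-CVX}. For the complexity count, the established inequality gives $(1-\rho^k)\Delta_k\le\frac{\alpha\rho^k}{2}\|x^0-x^*\|^2$, and since $-\log\rho\ge\alpha/L_B$, driving $\Delta_k$ below $\epsilon$ requires $O\big(\frac{L_B}{\alpha}\log(\frac{\alpha\|x^0-x^*\|^2}{\epsilon})\big)$ iterations.

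I expect the main obstacle to be the bookkeeping in the first step: correctly tracking how the strong-convexity surplus $\frac{\alpha}{2}\|x^k-x^*\|^2$ survives the $\kappa=1$ cancellations among $T_1$, $T_2$, and the proximal remainder, and confirming that the replacement of the varying, multiplier-dependent constant $L_\mu^k$ by the fixed $L_B$ is performed in the correct direction (which hinges precisely on the sign of $\|x^k-x^*\|^2-\|x^{k+1}-x^*\|^2$). The telescoping-with-monotonicity trick that manufactures the $(1-\rho^k)$ weight is the other point requiring care, but it is entirely elementary once the one-step contraction $u_{k+1}+\Delta_{k+1}\le\rho\,u_k$ is in hand.
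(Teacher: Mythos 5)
Your proposal is correct and follows essentially the same route as the paper: both derive the strongly convex one-step bound $\Delta_{k+1}\le \frac{L_\mu^k-\alpha}{2}\|x^k-x^*\|^2-\frac{L_\mu^k}{2}\|x^{k+1}-x^*\|^2$ by upgrading the convexity inequality in Lemma~\ref{lemma:opt-gap} at $\kappa=1$, replace $L_\mu^k$ by the uniform bound $L_B$ from Corollary~\ref{corollary:LagrangianBound}, and then perform a geometrically weighted telescoping combined with the monotonicity $\Delta_j\ge\Delta_k$ to manufacture the $(1-\rho^k)$ factor. The only cosmetic difference is that the paper normalizes by dividing through by $L_\mu^k$ before telescoping, whereas you multiply up using the nonnegativity of $\|x^k-x^*\|^2-\|x^{k+1}-x^*\|^2$; the two are algebraically equivalent.
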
 
\proof By the $\alpha$-strong convexity of $f$, let us replace \eqref{lm:opt-gap-2} with 
$$f(x^*)\geq \nabla f(x^k) + \nabla f(x^k)^\top(x^*-x^k) + \frac{\alpha}{2}\|x^k-x^*\|^2$$
and let us set $x = x^k, z^*(x) = x^{k+1},\lambda = \lambda^{k+1}$ and $\kappa=1$ in the proof of \eqref{lm:opt-gap-main-1}. Then following the same line of proof in Lemma \ref{lemma:opt-gap} with \eqref{lm:opt-gap-2} replaced by the above strongly convex version, we obtain the following strongly convex counterpart of \eqref{lm:opt-gap-main-1}:
\begin{eqnarray}
	\label{thm:cvg-scvx-1}
	F(x^{k+1}) - F(x^*) \leq \frac{L_\lambda^k-\alpha}{2}\|x^k-x^*\|^2 - \frac{L_\lambda^k}{2}\|x^{k+1}-x^*\|^2.\nonumber
\end{eqnarray}
Denote $\Delta_{k} := F(x^k)-F(x^*)$ and $\rho = 1-\alpha/L_B$. The above inequality implies that  
$$\frac{2\Delta_{k+1}}{L_B} \leq \frac{2\Delta_{k+1}}{L_\lambda^k} \leq \rho\|x^{k}-x^*\|^2 - \|x^{k+1}-x^*\|^2.$$
For any $K\geq1$, let us take the weighted summation of the above inequality over $k = 0,1,\cdots,K-1$ with weight $\rho^{K-k-1}$, we obtain the following bound
\begin{eqnarray*}
	\rho^K\|x^{0}-x^*\|^2 &\geq& \|x^{K}-x^*\|^2 + \frac{2}{L_B}\sum_{k=0}^{K-1}\rho^k\Delta_{K-k+1}\\
	&\geq & \|x^{K}-x^*\|^2 + \frac{2}{L_B}\sum_{k=0}^{K-1}\rho^k\Delta_{K}\\
	& = & \|x^{K}-x^*\|^2  + \frac{2}{\alpha}\Big(1-\rho^K\Big)\Delta_K.
\end{eqnarray*} 

To guarantee $\Delta_K \le \epsilon$, it suffices to have $\rho^K = O\big(\frac{2 \epsilon}{\alpha \|x^0 - x^*\|^2}\big).$  Taking logarithms on both sides and using $\rho=1- \alpha / L_B$, we obtain
\[
K = O\Big(\frac{\log(\alpha \|x^0 - x^*\|^2 / 2\epsilon)}{-\log \rho}\Big) = O\Big(\frac{L_B}{\alpha}\log\big(\frac{\alpha\|x^0-x^*\|^2}{\epsilon}\big)\Big),
\]
which completes the proof of the theorem.  
\hfill $\Box$

\section{Solving the Subproblem}
\label{Solving_Subproblem}

In this section, we will discuss how to solve the subproblem efficiently arising from the Lipschitz/H\"older continuous cases. In general, we will solve the problems of the form
\begin{eqnarray}
	\label{prob:subproblem}
	& \min_{p\in\RR^n} & a_0^\top p + \frac{L}{\kappa+1}\cdot\|p\|^{\kappa+1} \\
	& \mathrm{s.t.} & a_i^\top p + \frac{L_i}{\kappa_i+1}\cdot\|p\|^{\kappa_i+1} \leq b_i, \,\, \mbox{ for }\,\, i\in [m].\nonumber
\end{eqnarray}
Note that this is a strictly convex optimization problem given $\kappa_i\in(0,1], i=0,1,\cdots,m$. When $\kappa=1$, the problem becomes strongly convex. To solve this problem, let us rewrite the problem as 
$$\min_{p\in\RR^n} \max_{\lambda\geq0} \,\,\mathcal{L}(p,\lambda):=\left(a_0 + \sum_{i=1}^m\lambda_ia_i\right)^\top p + \frac{L}{\kappa+1}\cdot\|p\|^{\kappa+1}+\sum_{i=1}^m\frac{\lambda_iL_i}{\kappa_i+1}\cdot\|p\|^{\kappa_i+1} - b^\top\lambda.$$
Denote $A^\top = [a_1, a_2,\cdots,a_m]\in\RR^{n\times m}$. Then it is not hard to observe that $\mathcal{L}(p,\lambda)$ has a unique $p^*_\lambda$ that satisfies 
\begin{equation}
	\label{eqn:KKT-subproblem}
	a_0 + A^\top\lambda + L\|p^*_\lambda\|^{\kappa-1}\cdot p_\lambda^*+\sum_{i=1}^m \lambda_iL_i\|p^*_\lambda\|^{\kappa_i-1}\cdot p_\lambda^* = 0,
\end{equation} 
which further indicates that 
$\|a_0 + A^\top\lambda\| = \psi(\|p_\lambda^*\|)$, where $\psi(\alpha) = L \alpha^{\kappa} +\sum_{i=1}^m \lambda_iL_i\alpha^{\kappa_i}$ is a strictly monotonically increasing function on $[0,+\infty)$ as $L>0$, $\lambda_iL_i\geq0$, and $\kappa_i\in(0,1], 0\leq i\leq m$. Let $\alpha_\lambda^*\geq0$ be the unique non-negative real solution of the equation $\psi(\alpha) = \|a_0 + A^\top\lambda\|$, then we have 
$$p_\lambda^* = -\alpha_\lambda^*\cdot\frac{a_0+A^\top\lambda}{\|a_0+A^\top\lambda\|}.$$
In general, the solution $\alpha_\lambda^*$ can be easily obtained via binary search or secant method. When $\kappa = \cdots = \kappa_m = \kappa$, then a closed form solution will be available:
$$\alpha_\lambda^* = \left(\frac{\|a_0+A^\top\lambda\|}{L+\sum_{i=1}^m\lambda_iL_i}\right)^{1/\kappa}.$$
When $\kappa=1$, then this corresponds to the Lipschitz continuous gradient case, \textcolor{black}{which has been discussed in \cite{auslender2010moving}}. For notational simplicity, let us default $\lambda_0 = 1$, which is not included in the dual decision variable $\lambda$. Given the unique solution, we can obtain the dual problem of the subproblem \eqref{prob:subproblem}:
$$\max_{\lambda\geq0}\,\, \Phi(\lambda):=  - \sum_{i=0}^m\frac{\kappa_i\lambda_iL_i}{\kappa_i+1}\|p_\lambda^*\|^{\kappa_i+1} - b^\top\lambda$$
where $\Phi$ is derived by substituting \eqref{eqn:KKT-subproblem} to $\mathcal{L}(p_\lambda^*,\lambda)$. Note that by Danskin's theorem, we can compute the gradient of $\Phi$ as
$$\frac{\partial \Phi(\lambda)}{\partial \lambda_i} = \frac{\partial \mathcal{L}(p_\lambda^*,\lambda)}{\partial \lambda_i}  = a_i^\top p_\lambda^* + \frac{L_i}{\kappa_i+1}\|p_\lambda^*\|^{\kappa_i+1}-b_i,\quad\mbox{for}\quad i \in [m],$$
which corresponds to the primal constraint violation if $\frac{\partial \Phi(\lambda)}{\partial \lambda_i}>0$.  By the implicit function theorem, we can compute the Hessian of $\Phi$ as
\begin{eqnarray}
	\label{eqn:Hess-subprob}
	\nabla^2\Phi(\lambda) & = & \nabla^2_{\lambda\lambda}\mathcal{L}(p_\lambda^*,\lambda) - \nabla^2_{\lambda p}\mathcal{L}(p_\lambda^*,\lambda)\big[\nabla^2_{pp}\mathcal{L}(p_\lambda^*,\lambda)\big]^{-1}\nabla^2_{p\lambda}\mathcal{L}(p_\lambda^*,\lambda).
\end{eqnarray} 
Let $\tilde{L}\in\RR^m$ be a vector with $\tilde{L}_i = L_i\|p_\lambda^*\|^{\kappa_i-1}$, for $i = 1,2,...,m$, then straightforward computation gives $\nabla^2_{\lambda\lambda}\mathcal{L}(p_\lambda^*,\lambda) = 0_{m\times m}$,  $\nabla^2_{p\lambda}\mathcal{L}(p_\lambda^*,\lambda)=\big[\nabla^2_{\lambda p}\mathcal{L}(p_\lambda^*,\lambda)\big]^\top = A^\top+ p_\lambda^*\tilde{L}^\top$ and 
\begin{eqnarray}
	\nabla^2_{pp}\mathcal{L}(p_\lambda^*,\lambda) & = & \left(\sum_{i=0}^m\lambda_iL_i\|p_\lambda^*\|^{\kappa_i-1}\right)\cdot I_{n\times n} + \left( \sum_{i=0}^m\lambda_iL_i(\kappa_i-1)\|p_\lambda^*\|^{\kappa_i-3}\right)\cdot p_\lambda^*(p_\lambda^*)^\top.\nonumber 
\end{eqnarray} 
Consequently, noting $\kappa_i\in (0,1]$ for $i\in[m]$, we have 
$$\left(\sum_{i=0}^m\lambda_iL_i\|p_\lambda^*\|^{\kappa_i-1}\right)\cdot I_{n\times n}\succeq \nabla^2_{pp}\mathcal{L}(p_\lambda^*,\lambda)\succeq \left(\sum_{i=0}^m\lambda_iL_i\kappa_i\|p_\lambda^*\|^{\kappa_i-1}\right)\cdot I_{n\times n},$$
and the matrix $\nabla^2_{pp}\mathcal{L}(p_\lambda^*,\lambda)\succ0$ as long as $\lambda\neq 0$. Note that 
$$\|p_\lambda^*\| = \alpha_\lambda^* \leq \left(\frac{\|a_0+A^\top\lambda\|}{L}\right)^\frac{1}{\kappa}\leq \left(\frac{\|a_0\|+\|A\|\|\lambda\|}{L}\right)^\frac{1}{\kappa}.$$ 
By Assumption \ref{assum:upper}, there is an optimal $\|\lambda^*\|\leq B$. As the iterates' distance to the optimal solution is always non-increasing in many convex optimization algorithms such as (projected) gradient descent, the dual updates will be uniformly upper bounded by some constant $2B$ if one starts from $\lambda^0=0$. Suppose the matrix $A$ has full rank, i.e., $\sigma_0:=\sigma_{\min}(A)>0$. Then 
$$\nabla^2\Phi(\lambda)\succeq \frac{\sigma_0^2/4}{\sum_{i=0}^m\lambda_iL_i\|p_\lambda^*\|^{\kappa_i-1}}\cdot I_{m\times m}\succ 0_{m\times m}$$
if $L\geq \frac{2\|\tilde{L}\|\|a_0\|}{\sigma_0} + \left(\frac{2\|A\|}{\sigma_0}-1\right)2B\|\tilde{L}\|$. Thus, in this case the subproblem will exhibit a linear convergence if $\lambda^*$ is away from $0$. Such a linear convergence of subproblem is often observed in the numerical experiments. However, when the above requirements fail, then only an $O(1/k^2)$ sub-linear convergence rate can be observed in the subproblem if Nesterov's accelerated gradient method is applied to the dual problem. 

As a result, one can easily implement the gradient method with line search to quickly solve the dual problem. If the constraint number $m$ is mild, one may even apply the Projected Newton's method \cite{bertsekas1982projected}, which typically returns a highly accurate solution with only a few iterations. By properly utilizing the Sherman-Morrison formula, the per-iteration computational cost of Newton's method can be greatly reduced. After solving the dual problem, one can return $p_\lambda^*$ as the solution of the primal subproblem \eqref{prob:subproblem}.

\section{More Variants Based On Convex Majorization} 
\label{Barrier_Dikin}

\subsection{A Barrier Variant}

Now, assume that the objective and constraints are all gradient Lipschitz continuous, with Lipschitz constants $L$ and $L_i$ respectively, where $i=1,2,...,m$. In the case when $m$ is large, solving the subproblem of Algorithm GHMA may still be time-consuming. Instead, it is cheaper to consider a barrier-variant for the subproblem. Introduce the self-concordant logarithmic barrier of the quadratic constrained feasible region: 
\[
\tilde B_k(x) =
- \sum_{i=1}^m \ln \left( - c_i(x^k) - \nabla c_i(x^k)^\top (x-x^k) - \frac{L_i\|x-x^k\|^2}{2} \right) . 
\]
Let $\mu>0$ be a fixed algorithmic parameter. Then the Convex Envelope with Barriers (CEB) Algorithm is presented below. \vspace{0.3cm}

\shadowbox{\begin{minipage}{4.6in}
		{\bf Algorithm CEB (Convex Envelope with Barriers)}
		
		\begin{description}
			
			\item[Step 0:] Choose $c_i(x^0) < 0$, $i\in [m]$. Let $k:=0$.
			
			\item[Step 1:] Solve the following subproblem with $x=x^k$:
			\[
			z^*(x) := \mbox{arg}\min \, \nabla f(x^k)^\top (x-x^k) + L \|x-x^k\|^2 + \mu \tilde B_k(x)+r(x),
			\]
			and update $x^{k+1} = z^*(x^k)$.
			
			\item[Step 2:] If $\|x^k-x^{k+1}\| > \epsilon$ then $k:=k+1$, and return to {\bf Step 1}; else, stop. 
			
		\end{description}
\end{minipage}}
\vspace{0.15cm}

\noindent We observe that 
\begin{eqnarray}
	&&f(x^{k+1}) + r(x^{k+1}) +\mu \tilde B_k(x^{k+1})\nonumber\\
	&\leq& f(x^k)+ \nabla f(x^k)^\top (x^{k+1}-x^k)+ \frac{L
	}{2} \| x^{k+1}-x^k \|^2 +r(x^{k+1})+\mu \tilde B_k(x^{k+1})\nonumber\\
	&\leq &f(x^k) +r(x^{k})+\mu \tilde B_k(x^{k})-\frac{L
	}{2} \| x^{k+1}-x^k \|^2\nonumber,
\end{eqnarray}
where the second inequality is because $x^{k+1}$ optimizes the subproblem.
Given the fact that
\begin{eqnarray*} 
	\tilde B_k(x^{k+1}) &=&
	- \sum_{i=1}^m \ln \left( - c_i(x^k) - \nabla c_i(x^k)^\top (x^{k+1}-x^k) - \frac{L_i\|x^{k+1}-x^k\|^2}{2} \right) \\
	&\geq& - \sum_{i=1}^m \ln \left( -c_i(x^{k+1})\right)\\
	&=& \tilde B_{k+1}(x^{k+1}),
\end{eqnarray*}
we have
\[
f(x^{k+1}) +r(x^{k+1}) +\mu \tilde B_{k+1}(x^{k+1})\leq f(x^k) +r(x^{k}) +\mu \tilde B_k(x^{k})\nonumber-\frac{L
}{2} \| x^{k+1}-x^k \|^2.
\]
Denote $d^k=x^{k+1}-x^k$ and $\Delta=F(x^0)-F(x^*)$, where $x^*$ is a minimum point of the function $F
(x)=f(x)+r(x)-\mu \sum_{i=1}^m \ln \left( - c_i(x)\right)$. According to Assumption \ref{assumption:Slater}, we have $\Delta\geq 0$. Telescoping the above inequality gives 
$$\sum_{k=0}^{K-1}\frac{L}{2}\|x^{k+1}-x^k\|^{2}\leq F(x^0)-F^*=\Delta, $$
which further implies that 
\begin{equation}
	\label{eqn:dkmin0}
	\min_{0\leq k\leq K-1} \|d^k\|\leq \left(\frac{2\Delta}{LK}\right)^{1/2}.
\end{equation}
That is, the algorithmic residual $\|d^k\|\rightarrow 0$ at an $O(k^{-\frac{1}{2}})$ sublinear rate. Similar to the discussion in previous sections, we set 
${\rm Lev}(x^0):=\{x\in\XX:F(x)\leq F(x^0), c_i(x)\leq 0, i\in[m]\}$ the level set of any feasible solution $x^0$ to problem $(P)$.
According to the optimality condition, there exists an $l\in \partial r(x^{k+1})$ such that
\begin{equation}
	\nabla 
	f(x^k)+ l + L (x^{k+1}-x^k) +\mu \nabla \tilde B_k(x^{k+1})=0, \label{first-opt0}\nonumber
\end{equation}
with 
\begin{eqnarray}
	\nabla \tilde  B_k(x^{k+1})= - \sum_{i=1}^m \frac{-\nabla c_i(x^k)-L_i(x^{k+1}-x^k)} {\left( - c_i(x^k) - \nabla c_i(x^k)^\top (x^{k+1}-x^k) - \frac{L_i\|x^{k+1}-x^k\|^2}{2} \right)} . \label{barr0}\nonumber
\end{eqnarray}

So, we have the following equation
\begin{equation}
	\label{eqn0:grad-conti-kkt}
	\nabla 
	f(x^k)+ L(x^{k+1}-x^k) + \sum_{i=1}^m \lambda_i(\nabla c_i(x^k)+L_i(x^{k+1}-x^k)) =0.
\end{equation}
where $\lambda_i=\frac{-\mu}{ c_i(x^k) + \nabla c_i(x^k)^\top (x^{k+1}-x^k) + \frac{L_i\|x^{k+1}-x^k\|^2}{2}}>0$. Next, let us present an alternative
condition that guarantees the bound of $\lambda$. A similar result can be found in \cite{wei2025sqp}. As a consequence of \eqref{eqn0:grad-conti-kkt}, it remains to verify Assumption \ref{assum:upper}, that is, the Lagrangian multipliers are bounded when $\|x^k-x^{k+1}\|$ is sufficiently small. Then the next proposition shows that the conditions in Proposition \ref{proposition:Gen-LICT-to-Assp3} is sufficient for this purpose.

\begin{proposition} \label{proposition:barrier}
	Assumption \ref{assumption:Holder-Grad} with $\kappa=\kappa_i=1, i\in [m]$ together with the assumption in Proposition \ref{proposition:Gen-LICT-to-Assp3} implies that there exist $B,\varrho>0$ such that for any subproblem $(P_x)$ with $x\in{\rm Lev}(x^0)$ being strictly feasible, then $\lambda\in\RR^m_+$ satisfies $\|\lambda\|_\infty\leq B$ as long as $\|z^*(x)-x\|\leq \varrho$ for small enough $\mu$. (Note: $z^*(x)$ is defined in Step 1 of Algorithm CEB.)\\
\end{proposition}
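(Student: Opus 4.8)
The plan is to observe that, once we set $\kappa=\kappa_i=1$, the stationarity condition \eqref{eqn0:grad-conti-kkt} of Algorithm CEB is structurally the same as the KKT system \eqref{eqn:gen-LICQ-KKT} driving the proof of Proposition~\ref{proposition:Gen-LICT-to-Assp3}: both read $\nabla f(x^k)+L\,d+\sum_{i=1}^m\lambda_i\big(\nabla c_i(x^k)+L_i d\big)=0$ with $d=z^*(x)-x$ (the proximal coefficient being $L$ here versus $2L$ there, both harmless constants, and the subgradient term $l\in\partial r$ from \eqref{first-opt0} carried along as in that proof). Hence almost the entire argument transcribes. The one genuinely new feature is that the barrier formulation does not furnish exact complementary slackness $\lambda_i=0$ for inactive constraints; instead \eqref{barr0} gives the explicit formula $\lambda_i=\mu/g_i$, where $g_i:=-c_i(x^k)-\nabla c_i(x^k)^\top d-\tfrac{L_i}{2}\|d\|^2=-\tilde c_i(z^*(x)\mid x^k)$ is the slack, which is strictly positive because $z^*(x)$ lies in the interior of the barrier domain. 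This already certifies $\lambda\in\RR^m_+$, so it remains to bound $\|\lambda\|_\infty$.

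First I would handle the inactive constraints $i\notin I(x^k)=\{i:c_i(x^k)\ge-\delta\}$, replacing exact complementarity by a quantitative ``soft'' version. For such $i$ we have $c_i(x^k)<-\delta$, and choosing $\varrho$ small enough that $M_i\varrho+\tfrac{L_i}{2}\varrho^2\le\delta/2$ (the $\kappa_i=1$ strengthening of \eqref{eqn:gen-LICQ-varrho}, using $\|\nabla c_i\|\le M_i$ from condition \textbf{(ii)}) forces $g_i=-c_i(x^k)-\big(\nabla c_i(x^k)^\top d+\tfrac{L_i}{2}\|d\|^2\big)>\delta-\delta/2=\delta/2$ whenever $\|d\|\le\varrho$. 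Consequently $\lambda_i=\mu/g_i\le 2\mu/\delta$, so every inactive multiplier is of order $\mu$, and in particular bounded; this is precisely where the hypothesis ``$\mu$ small enough'' (or merely bounded) enters.

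Next I would bound the active multipliers $i\in I(x^k)$ exactly as in \eqref{ineq:lambdabound}. Condition \textbf{(i)} supplies a unit vector $e_i$ with $\langle\nabla c_j(x^k),e_i\rangle=0$ for $j\in I(x^k)\setminus\{i\}$ and $\langle\nabla c_i(x^k),e_i\rangle\ge\delta'$. Taking the inner product of \eqref{eqn0:grad-conti-kkt} with $e_i$, isolating $\lambda_i\langle\nabla c_i(x^k),e_i\rangle$, and applying Cauchy--Schwarz together with condition \textbf{(ii)} (bounding the first-order term by $M$, as in Proposition~\ref{proposition:Gen-LICT-to-Assp3}) yields
\[
\lambda_i\delta' \;\le\; M + L\varrho + \frac{2\mu}{\delta}\sum_{j=1}^m M_j + \varrho\sum_{j=1}^m \lambda_j L_j ,
\]
where the $O(\mu)$ term collects the inactive multipliers from the previous step. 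Splitting $\sum_j\lambda_j L_j$ into active and inactive parts, bounding the inactive part again by $\tfrac{2\mu}{\delta}\sum_j L_j$ and the active part by $\big(\sum_j L_j\big)\Lambda$ with $\Lambda:=\max_{j\in I(x^k)}\lambda_j$, and taking the maximum over $i\in I(x^k)$, I obtain $\Lambda\delta'\le(\text{const}+O(\mu))+\varrho\big(\sum_j L_j\big)\Lambda$. Requiring in addition $\varrho\sum_j L_j\le\delta'/2$ (the $\kappa_i=1$ analogue of \eqref{eqn:gen-LICQ-varrho'}) absorbs the last term and gives $\Lambda\le\tfrac{2}{\delta'}(\text{const}+O(\mu))$. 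Combining this with the inactive bound $2\mu/\delta$ produces $\|\lambda\|_\infty\le B$ with an explicit constant $B$, which establishes the claim.

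I expect the soft-complementarity step to be the only conceptual obstacle. In the hard-constrained subproblem of Proposition~\ref{proposition:Gen-LICT-to-Assp3} the inactive multipliers vanish identically, whereas here I must argue quantitatively both that they are $O(\mu)$ and that their feedback through the $\sum_j\lambda_j L_j$ term does not destroy the fixed-point bound on the active multipliers; the margin $g_i\ge\delta/2$ is what makes this work. Everything else is a routine transcription of the Lipschitz ($\kappa=\kappa_i=1$) case of that proof to the barrier setting.
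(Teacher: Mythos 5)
Your proposal is correct and follows essentially the same route as the paper's proof: bound the inactive multipliers via the explicit barrier formula $\lambda_i=\mu/g_i$ together with the slack margin $g_i\ge\delta/2$ enforced by shrinking $\varrho$ (the ``soft complementarity'' step), then bound the active multipliers by pairing the stationarity condition with the unit vectors $e_i$ from condition \textbf{(i)} and absorbing the $\varrho\sum_j\lambda_jL_j$ feedback term via $\varrho\sum_jL_j\le\delta'/2$. Your explicit splitting of the feedback sum into active and inactive parts is a slightly more careful bookkeeping of the same fixed-point argument the paper runs directly on $\|\lambda\|_\infty$.
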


\begin{proof}
	Let us suppose $\|d\|\leq \varrho$ 
	for some small enough $\varrho$ s.t.
	\begin{equation}
		\label{eqn:gen-LICQ-varrho2}
		\delta \,\,\geq\,\, \max_{i\in[m]} \Big\{\frac{M_i\varrho}{2} + \frac{L_i
			\varrho^{2}}{4}\Big\}.
	\end{equation}
	
	For $i\notin I(x)$, we have $c_i(x) < -\delta$. By \eqref{eqn:gen-LICQ-varrho}, definition of $\tilde{c_i}$, and condition \textbf{(ii)} in Assumption \ref{assumption:Holder-Grad}, we have
	\begin{eqnarray} 
		\big|\tilde{c_i}(z^*(x)\mid x)-c_i(x)\big| 
		\leq M_i\|d\|+\frac{L_i}{2}\|d\|^{2} \leq  \delta/2.\nonumber
	\end{eqnarray}
	This further implies that $\tilde{c_i}(z^*(x)\mid x)<-\delta/2$ and hence $\lambda_i\leq \frac{2\mu}{\delta}$. So, we have $\lambda_i\leq \frac{2\mu}{\delta}\leq  M/\delta'$ for small enough $\mu$.

	For $i\in I(x^k)$, which means $c_i(x^k)> -\delta$, we have the same inequality \eqref{ineq:lambdabound} as in the proof of Proposition \ref{proposition:Gen-LICT-to-Assp3}:
	\begin{equation} 
		\lambda_i\delta' \leq M+2L\varrho^\kappa+\sum_{j=1}^m \lambda_j L_j\varrho^{\kappa_i}, \quad i\in I(x).
	\end{equation} 
	Note that when $i \notin I(x)$,  we have already proved that $\lambda_i\leq \frac{M}{\delta'}$. As $\lambda\geq0$, the above inequality further indicates that 
	$\delta'\|\lambda\|_\infty\leq M+ 2L\varrho^\kappa+\sum_{j=1}^mL_j\varrho^{\kappa_i} \|\lambda\|_\infty$. Suppose the $\varrho$ in Assumption \ref{assum:upper} is taken small enough so that 
	\begin{equation}
		\label{eqn:gen-LICQ-varrho'}
		\delta'/2 \geq \sum_{j=1}^mL_j\varrho^{\kappa_i}.
	\end{equation}
	Then we obtain the following upper bound for the multipliers  
	$\|\lambda\|_\infty\leq 2\left(M+ 2L\varrho^\kappa\right)/\delta'.$
	That said, Assumption \ref{assum:upper} holds with any small enough $\varrho$ satisfying \eqref{eqn:gen-LICQ-varrho} and \eqref{eqn:gen-LICQ-varrho'}, and $B = 2\left(M+ 2L\varrho^\kappa\right)/\delta'.$
\end{proof}

\begin{theorem}
	Given any sufficiently small $\mu>0$ such that Proposition \ref{proposition:barrier} holds, suppose we run Algorithm CEB for $\big\lceil2\left(L+B\sum_{i=1}^m L_i\right)^2\frac{\Delta}{L\epsilon^2}\big\rceil$
	steps. Then there exists $1\leq k_* \leq K$ such that $\big\|\nabla 
	f(x^{k_*})+\sum_{i=1}^m \lambda_i \nabla c_i(x^{k_*})\big\|\leq \epsilon$, indicating an $O(\epsilon^{-2})$ iteration complexity for CEB.
\end{theorem}
\begin{proof}
	According to \eqref{eqn:dkmin0}, there exists an index $k_*$ s.t. 
	$\|x^{k_*}-x^{k_*-1}\|\leq \left(\frac{2\Delta}{LK}\right)^{1/2}\leq \varrho$ when CEB runs for $K\geq \frac{2\Delta}{L\varrho^2}$ steps. Then taking $k = k_*-1$ in \eqref{eqn0:grad-conti-kkt}  gives   
	\begin{eqnarray*}
		& & \Big\|\nabla 
		f(x^{k_*})+\sum_{i=1}^m \lambda_i \nabla c_i(x^{k_*})\Big\|\\
		& = &\Big\| \Big(L  + \sum_{i=1}^m \lambda_i L_i\Big) d^{k_*-1}+\Big(\nabla f(x^{k_*-1})-\nabla f(x^{k_*})\Big) 
		+ \sum_{i=1}^m \lambda_i\Big(\nabla c_i(x^{k_*})\Big)\Big\| \\
		&\leq & ( L+ \sum_{i=1}^m \lambda_i L_i) \big\|d^{k_*-1}\big\|\nonumber\\
		&\leq & \left(L+B\sum_{i=1}^m L_i\right)\left(\frac{2\Delta}{LK}\right)^{1/2},
	\end{eqnarray*}
	where we applied Proposition \ref{proposition:barrier} and \eqref{eqn:dkmin0}.
\end{proof}

\subsection{A Dikin Ellipsoid Approach}
In the case of Lipschitz gradient objective and constraints, the subproblems of GHMA are 
\begin{eqnarray}
	\label{prob:subproblem-Lip}
	& \min_{p\in\RR^n} & f(x_k) + \nabla f(x_k)^\top p + \frac{L}{2}\cdot\|p\|^{2} \\
	& \mathrm{s.t.} & c_i(x_k)+\nabla c_i(x_k)^\top p + \frac{L_i}{2}\cdot\|p\|^{2} \leq 0, \,\, \mbox{ for }\,\, i\in [m] . \nonumber
\end{eqnarray}
In case $m$ is large, the above subproblem may still be expensive to solve. Inspired by the Dikin ellipsoid approach in Algorithm FOSO, one may consider a Dikin ellipsoid as a surrogate for all the quadratic constraints, thus simplifying the subproblem considerably. 
Consider the self-concordant log-barrier function:
$$\tilde B_{k}(p):=-\sum_{i=1}^m\log\left(-c_i(x_k)-\nabla c_i(x_k)^\top p-\frac{L_i}{2}\|p\|^2\right).$$
Since $\tilde B_k$ is self concordant with self-concordance constant 1; see \cite{nesterov2018lectures}. In this case, the Dikin ellipsoid is defined by
\begin{equation}
	\label{defn:Dikin}
	\cE_k:=\left\{p\in\RR^n: p^\top H_k p\leq 1\right\}
\end{equation}
and 
\begin{equation}
	\label{defn:HessBarier}
	H_k:=\nabla^2 \tilde B_{k}(0) = \left(\sum_{i=1}^m\frac{L_i}{-c_i(x_k)}\right)\cdot I + \sum_{i=1}^m\frac{\nabla c_i(x_k)\nabla c_i(x_k)^\top}{c_i^2(x_k)}.
\end{equation} 
It is known that the Dikin ellipsoid $\cE_k$ is fully contained in the interior of the feasible region of \eqref{prob:subproblem-Lip}. Consequently, for  $\forall p\in\cE_k$, we have $c_i(x_k+p)<0$ for $i=1,...,m$. Therefore, we arrive at the following affine scaling alternative 
\begin{equation}
	\label{algo:Affine-Scaling}
	\begin{cases}
		p_k = \argmin_{p\in\cE_k} \,\, f(x_k) + \nabla f(x_k)^\top p + \frac{L}{2}\|p\|^2\\
		x_{k+1} = x_k + \alpha_kp_k,
	\end{cases}
\end{equation}
where $\alpha_k\in(0,1]$ is a diminishing stepsize that satisfies 
\begin{equation}
	\label{eqn:summable}
	\sum_{k=0}^{\infty}\alpha_k = +\infty \qquad\mbox{and}\qquad \sum_{k=0}^{\infty}\alpha_k^2 < +\infty.
\end{equation}
Although in practice taking $\alpha_k\equiv1$ often works very well, it is essential to take a diminishing stepsize. Otherwise, adversarial counterexamples exist where $x_k$ converges to non-stationary boundary points. We show how this is possible by the following example. \vspace{0.4 cm}

\shadowbox{\begin{minipage}{5.2in}
		{\bf Algorithm CEAS (Convex Enveloping with Affine Scaling)}
		
		\begin{description}
			
			\item[Step 0:] Choose $c_i(x^0) < 0$, $i\in [m]$. Let $k:=0$.
			
			\item[Step 1:] 
			Update by formula \eqref{algo:Affine-Scaling}. If $\|x^k-x^{k+1}\| > \epsilon$ then set $k:=k+1$, and return to {\bf Step 1}; else stop the algorithm. 
		\end{description}
\end{minipage}}  \vspace{0.4 cm}

Without diminishing stepsizes $\alpha_k$, Algorithm CEAS does not converge in general, as the example shown in Appendix \ref{cexample}.

\section{A Second-Order Extension} 
\label{Second_Order_Extension}

In this section, we introduce a two-tier  second-order approach to find approximate second-order solutions of the following constrained nonconvex optimization model
\[
\begin{array}{ll}
	\min & f(x)  \\
	\mbox{s.t.} & c_i(x) \le 0,\, i\in [m] \\
\end{array}
\]
where $f$ is gradient {\it and}\/ Hessian Lipschitz with constants $L$ and $L_{h_0}$ respectively, and $c_i$'s are gradient
Lipschitz with constants $L_i$, 
$i\in [m]$. This is a smooth special case of $(P)$ where $r(x)=0$ and $\cX = \mathbf{R}^n$. In this case, at iteration $k$, we apply a two-tier approach, namely we first apply Algorithm CMMA with 
\[
\tilde{f}(x\mid x^k ) = f(x^k) + \nabla f(x^k)^\top (x-x^k) + \frac{L\|x-x^k\|^2}{2} 
\]
and 
\[
\tilde{c}_i(x\mid x^k ) = c_i(x^k) + \nabla c_i(x^k)^\top (x-x^k) + \frac{L_i\|x-x^k\|^2}{2} ,\, i\in [m].
\]
We proceed by 
solving the first-order approximation subproblem:
\[
\begin{array}{lll}
	(FP_k) & \min & \nabla f(x^k)^\top (x-x^k) + \frac{L}{2} \| x - x^k\|^2 \\
	& \mbox{s.t.} & 
	c_i(x^k) + \nabla c_i(x^k)^\top (x-x^k) + \frac{L_i\|x-x^k\|^2}{2} \le 0,\, i\in [m] , \\
\end{array}
\]
and let its solution $x^{k+1}$ be the next iterate,   
until we find $x^{k}$ to be an approximate KKT point of $(FP_k)$, defined by the fact that the optimal value of the subproblem $(FP_k)$ is no less than $-\epsilon_1$, where $\epsilon_1>0$ is a prescribed precision. 
In this circumstance, we then resort to the following second-order surrogate subproblem with another prescribed precision $\epsilon_2>0$, where we aim to reduce the objective value by further exploiting the potential negative curvature directions of the Hessian matrix: 
\[
\begin{array}{lll}
	(SP_k) & \min & \frac{1}{2} (x-x^k)^\top
	\nabla^2 f(x^k) (x-x^k) + \frac{L_{h_0}}{6} \| x - x^k\|^3 \\
	& \mbox{s.t.} & 
	\nabla f(x^k)^\top (x-x^k) \le \epsilon_2 \\ 
	& & c_i(x^k) + \nabla c_i(x^k)^\top (x-x^k) + \frac{L_i\|x-x^k\|^2}{2} \le 0,\, i\in [m] . \\
\end{array}
\]
The above subproblem $(SP_k)$ has a nice convex constraint structure. 
Nonetheless, its objective is nonconvex, which makes it an NP-hard problem in general (see e.g.~\cite{NLR2018}). Observe, however, that the constraint set admits a self-concordant barrier function
\begin{eqnarray*} 
	B_k(x) &:=&  
	- \ln \left( \epsilon_2 - \nabla f(x^k)^\top (x-x^k) \right) \\
	& & 
	- \sum_{i=1}^m \ln \left( - c_i(x^k) - \nabla c_i(x^k)^\top (x-x^k) - \frac{L_i\|x-x^k\|^2}{2} \right) . 
\end{eqnarray*}
It is well-known (cf.~\cite{N2004}) that the so-called Dikin ellipsoid defined by the local norm induced by the Hessian of the self-concordant barrier $B_k(x)$ at $x^k$ is completely contained in the feasible region of $(SP_k)$. Therefore, we consider the following direction-finding subproblem
\[
\begin{array}{lll}
	(SP_k)' & \min & \frac{1}{2} d^\top \nabla^2 
	f(x^k) d + \frac{L_{h_0}}{6} \| d \|^3 \\
	& \mbox{s.t.} & d^\top \nabla^2 B_k(x^k) d \le \delta<1. 
\end{array}
\]
If $(SP_k)'$ is solvable with solution $d^k$, then we update the iterate as $x^{k+1}:=x^k+d^k$, 
and the iteration moves on with solving $(FP_{k+1})$ first before attempting $(SP_{k+1})'$. 
Denote the optimal value of $(SP_k)'$ to be $v((SP_k)')$. Then, 
\begin{eqnarray*}
	&&f(x^{k+1})\\
	&\le & \!
	f(x^k) \!+ \!\nabla f(x^k)^\top (x^{k+1}-x^k) \!+ \!\frac{1}{2} (x^{k+1}-x^k)^\top \nabla^2 f(x^k) (x^{k+1}-x^k) \!+ \!\frac{L_{h_0}}{6} \| x^{k+1}-x^k \|^3 \\
	&\le& f(x^k) + \epsilon_2 + v((SP_k)') . 
\end{eqnarray*}
If $v((SP_k)') < - 2 \epsilon_2$, then we have $f(x^{k+1}) < f(x^k) - \epsilon_2$, 
which assures a sufficient descent. Otherwise, it signifies the fact that $x^k$ is already an approximate stationary point in the second-order sense. 
Therefore, what remains is to study how $(SP_{k})'$ may be solved. To simplify the notation, consider a generic and purified form of $(SP_k)'$ as follows: 
\[
\begin{array}{lll}
	(SP)' & \min & d^\top Q d + \| d \|^3 \\
	& \mbox{s.t.} & d^\top P d \le 1, 
\end{array}
\]
where $P\succ 0$. Moving forward, we introduce a key ingredient known as Brickman's theorem (cf.~\cite{B1961,ML2005}), which states the following fact: 
\begin{proposition} (Brickman's Theorem) 
	Suppose $n\ge 3$. Let $Q_1$ and $Q_2$ be two real symmetric matrices. Then, 
	\[
	S(Q_1,Q_2):=
	\left. \left\{ \left( \begin{array}{c} x^\top Q_1 x \\ x^\top Q_2 x \end{array} \right) \, \right| \, x\in \RR^n \mbox{ with } \|x\|=1 \right\}
	\]
	is a convex set in $\RR^2$. 
\end{proposition}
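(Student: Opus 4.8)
The plan is to prove convexity through a \emph{slicing criterion}: a set $K\subseteq\RR^2$ is convex if and only if its intersection with every line is connected (equivalently, an interval). The nontrivial direction is immediate: given $p,q\in K$, the line through them meets $K$ in a connected subset of that line, i.e.\ an interval containing both $p$ and $q$, hence containing all of $[p,q]$. So it suffices to show that $S(Q_1,Q_2)\cap\ell$ is connected for every line $\ell$. Writing $\phi(x)=(x^\top Q_1 x,\ x^\top Q_2 x)$ so that $S(Q_1,Q_2)=\phi(S^{n-1})$ with $S^{n-1}=\{x:\|x\|=1\}$, and parametrizing a line as $\ell=\{(s,t):\alpha s+\beta t=\gamma\}$ with $(\alpha,\beta)\neq 0$, I would set $M:=\alpha Q_1+\beta Q_2$. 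A unit vector $x$ satisfies $\phi(x)\in\ell$ exactly when $x^\top M x=\gamma$, so $S(Q_1,Q_2)\cap\ell=\phi(\Sigma_\gamma)$ where $\Sigma_\gamma:=\{x\in S^{n-1}:x^\top M x=\gamma\}$. Since $\phi$ is continuous, the whole theorem reduces to showing that $\phi(\Sigma_\gamma)$ is connected, and this is exactly the step where $n\ge 3$ will be used.

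To analyze $\Sigma_\gamma$, I would first diagonalize the symmetric matrix $M$ by an orthogonal change of variables, so that $x^\top M x=\sum_i\mu_i x_i^2$ with $\mu_1\le\cdots\le\mu_n$; this replaces $Q_1,Q_2$ by other symmetric matrices but preserves the sphere, the antipodal map, and the crucial identity $\phi(-x)=\phi(x)$. Passing to $y_i:=x_i^2\ge 0$ exhibits $\Sigma_\gamma$ as a branched cover of the polytope $P_\gamma=\{y\ge 0:\sum_i y_i=1,\ \sum_i\mu_i y_i=\gamma\}$: the fiber over $y$ consists of the $2^{|\{i:y_i>0\}|}$ sign choices of $x$, and two sheets differing only in the sign of $x_i$ are glued over the facet $\{y_i=0\}$ of $P_\gamma$. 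Such a facet is present precisely when $\gamma\in[\min_{j\neq i}\mu_j,\ \max_{j\neq i}\mu_j]$, i.e.\ when the hyperplane $\sum\mu_j y_j=\gamma$ meets the corresponding sub-simplex.

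The crux, and the role of $n\ge 3$, is to count the connected components of $\Sigma_\gamma$. Because $n\ge 3$, for every $\gamma\in(\mu_1,\mu_n)$ there is at least one \emph{interior} index $i$ (neither the sole minimizer nor the sole maximizer of $\mu$), and for such $i$ the flip of $x_i$ is always available; moreover $\mu_2\le\mu_{n-1}$ rules out two simultaneously unavailable flips, so at most one sign flip can be missing, namely that of $x_1$ (when $\gamma<\mu_2$) or of $x_n$ (when $\gamma>\mu_{n-1}$). In the exceptional case the offending coordinate never vanishes on $\Sigma_\gamma$ (the facet $\{y_i=0\}$ being absent forces $y_i>0$ throughout $P_\gamma$). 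Hence $\Sigma_\gamma$ has at most two connected components, and when there are exactly two they are distinguished by the sign of that coordinate and are therefore interchanged by $x\mapsto -x$. Since $\phi(-x)=\phi(x)$, the two components have the same image, so $\phi(\Sigma_\gamma)$ is the continuous image of a single connected component and is connected. The degenerate cases fall out directly: repeated eigenvalues only make more flips available (and turn endpoint slices into connected eigen-spheres), $\gamma\in\{\mu_1,\mu_n\}$ gives a unit sphere of an eigenspace, and $\gamma\notin[\mu_1,\mu_n]$ gives $\Sigma_\gamma=\emptyset$. Feeding this back through the slicing criterion yields the asserted convexity of $S(Q_1,Q_2)$.

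The main obstacle I anticipate is precisely the possible disconnectedness of $\Sigma_\gamma$. For $n=2$ the level set is four points forming two antipodal pairs; $\phi$ collapses each pair to a point but leaves two distinct points, so $\phi(\Sigma_\gamma)$ is disconnected and the theorem genuinely fails. The entire content of the proof is that a third dimension supplies one extra sign-flip that fuses the sheets into at most two components which $\phi$ then identifies. Making the component count and the ``antipodal interchange'' rigorous uniformly over all positions of $\gamma$ and all eigenvalue multiplicities is the delicate part; the reduction to a single line and the branched-cover bookkeeping are routine.
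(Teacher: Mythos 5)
The paper does not prove this proposition at all: it is quoted as a known fact with a pointer to Brickman's 1961 paper and a later reference, so there is no in-paper argument to compare yours against. Your proposal is a correct, self-contained proof along the classical lines: the slicing criterion (a planar set is convex iff every line meets it in a connected set) is valid, the reduction of $S(Q_1,Q_2)\cap\ell$ to $\phi(\Sigma_\gamma)$ with $\Sigma_\gamma=\{x\in S^{n-1}:x^\top Mx=\gamma\}$, $M=\alpha Q_1+\beta Q_2$, is exact, and the whole content is then that for $n\ge 3$ this level set has at most two components which, if distinct, are exchanged by $x\mapsto -x$ and hence identified by $\phi$. The step you flag as delicate does go through, and can be made clean by replacing the ``branched cover'' language with an orthant decomposition: after diagonalizing $M$ with eigenvalues $\mu_1\le\cdots\le\mu_n$, the piece $A=\Sigma_\gamma\cap\{x\ge 0\}$ is homeomorphic via $y_i=x_i^2$ to the convex (hence connected) polytope $P_\gamma$, the set $\Sigma_\gamma$ is the union of the $2^n$ reflected copies $\varepsilon\cdot A$, and two copies differing in the single coordinate $i$ intersect exactly when the facet $\{y_i=0\}\cap P_\gamma$ is nonempty, i.e.\ when $\gamma\in\bigl[\min_{j\neq i}\mu_j,\ \max_{j\neq i}\mu_j\bigr]$; since $n\ge 3$ gives $\mu_2\le\mu_{n-1}$, at most one such facet can be missing, so the component count reduces to a finite connectivity check on the hypercube $\{\pm1\}^n$ and yields exactly the at-most-two antipodal components you describe. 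Your route buys a from-first-principles argument in which the role of $n\ge 3$ is completely explicit (and your $n=2$ counterexample is the right one); the paper's route buys brevity by outsourcing a standard result.
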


Next, we shall give an LMI representation for the above convex set.
\begin{lemma}
	Suppose $n\ge 3$, and $Q_1,Q_2$ are two arbitrary real symmetric matrices. Then, 
	\[
	S(Q_1,Q_2)=
	\left. \left\{ \left( \begin{array}{c}  Q_1 \bullet X \\ Q_2 \bullet X \end{array} \right) \, \right| \, X\succeq 0 \mbox{ and } {\rm Tr}\, X = 1 \right\}.
	\]
\end{lemma}

\begin{proof}
	First of all, it is standard that 
	\[
	S(Q_1,Q_2)
	\subseteq 
	\left. \left\{ \left( \begin{array}{c}  Q_1 \bullet X \\ Q_2 \bullet X \end{array} \right) \, \right| \, X\succeq 0 \mbox{ and } {\rm Tr}\, X = 1 \right\}. 
	\]
	Now, take an arbitrary point 
	\[
	v \in 
	\left. \left\{ \left( \begin{array}{c}  Q_1 \bullet X \\ Q_2 \bullet X \end{array} \right) \, \right| \, X\succeq 0 \mbox{ and } {\rm Tr}\, X = 1 \right\}.
	\]
	That is, there exists $\hat X \succeq 0$ with $ \mbox{\rm Tr}\, \hat X =1$ and $v_i = Q_i \bullet \hat X$, $i=1,2$. By Proposition 3 in Sturm and Zhang~\cite{SZ2003}, one can compute a rank-one decomposition of $\hat X$ in such a way that
	\[
	\hat X = \sum_{i=1}^r \hat x_i \hat x_i^\top \mbox{ and }
	\|\hat x_i\|^2 = 1/r, \, i=1,2,...,r
	\]
	where $r=\mbox{\rm rank}\, \hat X$. 
	Therefore, 
	\[
	\left( \begin{array}{c} r \hat x_i^\top Q_1 \hat x_i \\ r \hat x_i^\top Q_2 \hat x_i \end{array} \right) \in S(Q_1,Q_2), \mbox{ with } i =1,2,...,r. 
	\]
	Since $S(Q_1,Q_2)$ is a convex set by Brickman's theorem, we have
	\[
	\frac{1}{r} \sum_{i=1}^r  \left( \begin{array}{c} r \hat x_i^\top Q_1 \hat x_i \\ r \hat x_i^\top Q_2 \hat x_i \end{array} \right) = 
	\left( \begin{array}{c}  Q_1 \bullet \hat X \\  Q_2 \bullet \hat X \end{array} \right) =  v \in S(Q_1,Q_2), 
	\]
	which proves the theorem. In other words, 
	$$S(Q_1,Q_2) = \left. \left\{ \left( \begin{array}{c}  Q_1 \bullet X \\ Q_2 \bullet X \end{array} \right) \, \right| \, X\succeq 0 \mbox{ and } {\rm Tr}\, X = 1 \right\}.$$
\end{proof} 
\noindent Denote 
\[
H(Q_1,Q_2) := \left. \left\{ \left( \begin{array}{c} x^\top x \\ x^\top Q_1 x \\ x^\top Q_2 x \end{array} \right) \, \right| \, x\in \RR^n \right\} \in \RR^3,
\]
which is a {\it homogenized}\/ conic version (aka perspective) of $S(Q_1,Q_2)$; see e.g.~\cite{BV2004}. Therefore, it is a convex cone if $n\ge 3$, thanks to Brickman's theorem ~\cite{BV2004}. Moreover, it has an SDP representation: 
\[
H(Q_1,Q_2) = \left. \left\{ \left( \begin{array}{c}  
	I \bullet X \\
	Q_1 \bullet X \\ 
	Q_2 \bullet X \end{array} \right) \, \right| \, X\succeq 0 \right\}. 
\]
Therefore, if $n\ge 3$, then solving $(SP)'$ is equivalent to the following convex SDP problem:
\begin{eqnarray}
	\label{subprob:SDP}
	& \min & y_3 + y_1^{3/2} \nonumber\\
	& \mbox{s.t.} & 
	\left( \begin{array}{c}  
		y_1 \\ y_2 \\ y_3  
	\end{array} \right) \in H(P,Q) \\
	& & y_2 \le 1, \nonumber
\end{eqnarray}

We call solution $x^k$ to be an $(\epsilon_1,\epsilon_2)$-approximate KKT solution ($\epsilon_1>0,\,\epsilon_2>0$) if the optimal value of $(FP_k)$ is no less than $-\epsilon_1$ {\it and}\/ the optimal value of $(SP_k)'$ is no less than $-2\epsilon_2$. That is, \ $ v((FP_k)) \ge -\epsilon_1$ and $v((SP_k)')\ge -\epsilon_2$. \vspace{0.4cm}

\shadowbox{\begin{minipage}{5.4in}
		{\bf Algorithm FOSO (First-Order and Second-Order)}

		\begin{description}
			
			\item[\textbf{Step 0}:] Choose $c_i(x^0) < 0$, $i\in [m]$. Let $k:=0$.
			
			\item[\textbf{Step 1}:] 
			Solve 
			\[
			\begin{array}{lll}
				(FP_k) & \min & \nabla f(x^k)^\top d + \frac{L}{2} \|d \|^2 \\
				& \mbox{s.t.} & 
				c_i(x^k) + \nabla c_i(x^k)^\top d + \frac{L_i\|d\|^2}{2} \le 0,\, 
				i \in [m]  \\
			\end{array}
			\]
			If $v((FP_k)) < - \epsilon_1$, then $x^{k+1}:=x^k+d^\star$, $k:=k+1$, and return to {\bf Step 1}. 
			
			\item[\textbf{Step 2}:] Else, solve 
			\[
			\begin{array}{lll}
				(SP_k)' & \min & \frac{1}{2} d^\top \nabla^2 f(x^k) d + \frac{L_{h_0}}{6} \| d \|^3 \\
				& \mbox{s.t.} & d^\top \nabla^2 B_k(x^k) d \le 1. 
			\end{array}
			\]
			If $v((SP_k)') < - 2 \epsilon_2$, then $x^{k+1}:=x^k+d^\star$, $k:=k+1$, and go to {\bf Step 1}. Else, stop.  
		\end{description}
\end{minipage}}
\vspace{0.3cm}

\noindent Based on the above derivation in this section, we obtain the following theorem. \vspace{-0.3cm}
\begin{theorem}
	The above described two-tier approach Algorithm FOSO produces a sequence $\{x^k :\: k=1,2,...\}$ that is monotonically decreasing in the objective value, and the process stops in no more than $O(1/\epsilon_1)$ steps of solving first-order subroutines $(FP_k)$ and no more than $O(1/\epsilon_2)$ steps of solving second-order subroutines $(SP_k)'$ before reaching an $(\epsilon_1,\epsilon_2)$-approximate KKT solution. 
\end{theorem}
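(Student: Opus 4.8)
The plan is to prove a fixed per-step decrease of the objective $f$ separately for each of the two tiers, and then convert the total available decrease---finite thanks to the bounded-level-set assumption---into the two iteration bounds. Monotonicity will fall out for free once both per-step decreases are established.

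First I would treat the first-order tier. Whenever $(FP_k)$ returns $v((FP_k)) < -\epsilon_1$ and the algorithm sets $x^{k+1} = x^k + d^k$, the gradient-Lipschitz descent inequality for $f$ gives
\[
f(x^{k+1}) \le f(x^k) + \nabla f(x^k)^\top d^k + \frac{L}{2}\|d^k\|^2 = f(x^k) + v((FP_k)) < f(x^k) - \epsilon_1,
\]
so each accepted first-order step lowers $f$ by more than $\epsilon_1$. Feasibility is retained because the quadratic surrogate constraints of $(FP_k)$ majorize each $c_i$ (again by gradient-Lipschitzness), whence $c_i(x^{k+1}) \le c_i(x^k) + \nabla c_i(x^k)^\top d^k + \frac{L_i}{2}\|d^k\|^2 \le 0$.

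Next I would treat the second-order tier, where $d^k$ solves $(SP_k)'$ whose only constraint is the unit Dikin ellipsoid of the self-concordant barrier $B_k$. By the cited self-concordance property, this ellipsoid lies inside the feasible region of $(SP_k)$, so $d^k$ automatically satisfies both the linear constraint $\nabla f(x^k)^\top d^k \le \epsilon_2$ and the quadratic surrogate constraints (the latter again preserving feasibility). Substituting $\nabla f(x^k)^\top d^k \le \epsilon_2$ into the Hessian-Lipschitz second-order expansion already displayed in the excerpt yields $f(x^{k+1}) \le f(x^k) + \epsilon_2 + v((SP_k)')$, and since a second-order step is taken only when $v((SP_k)') < -2\epsilon_2$, we obtain $f(x^{k+1}) < f(x^k) - \epsilon_2$. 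Monotonicity is now immediate, since every accepted step of either type strictly decreases $f$. For the counts, set $\Delta := f(x^0) - f_{\min}$, which is finite because the monotone iterates stay in the bounded set ${\rm Lev}(x^0)$ on which the continuous $f$ attains its minimum $f_{\min}$; if $K_1$ first-order and $K_2$ second-order steps occur before termination, summing the two per-step bounds gives $K_1\epsilon_1 + K_2\epsilon_2 \le \Delta$, hence $K_1 \le \Delta/\epsilon_1 = O(1/\epsilon_1)$ and $K_2 \le \Delta/\epsilon_2 = O(1/\epsilon_2)$. At termination the algorithm halts in Step 2 with $v((FP_k)) \ge -\epsilon_1$ and $v((SP_k)') \ge -2\epsilon_2$, which is precisely the defining condition of an $(\epsilon_1,\epsilon_2)$-approximate KKT solution.

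The hard part will be the second tier, specifically making the Dikin-ellipsoid containment rigorous: one must verify that $\{d : d^\top \nabla^2 B_k(x^k) d \le 1\}$ lies inside the feasible set of $(SP_k)$, since this one fact simultaneously delivers feasibility of $x^{k+1}$ and the bound $\nabla f(x^k)^\top d^k \le \epsilon_2$ that controls the linear term in the second-order descent. A secondary point to confirm is that the iterates remain \emph{strictly} feasible, so that each barrier $B_k$ is well defined at $x^k$; this can be arranged exactly as in the gradient-H\"olderian case by taking each $L_i$ strictly above the minimal Lipschitz constant, which makes the surrogate a strict overestimate of $c_i$ away from $x^k$ and hence forces $c_i(x^{k+1}) < 0$ whenever a nontrivial step $d^k \ne 0$ is taken.
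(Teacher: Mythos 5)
Your proposal is correct and follows essentially the same route as the paper, which establishes the theorem through the discussion preceding its statement: the per-step decrease of at least $\epsilon_1$ for accepted first-order steps (via the gradient-Lipschitz majorization) and at least $\epsilon_2$ for accepted second-order steps (via the Hessian-Lipschitz expansion combined with the Dikin-ellipsoid containment that enforces $\nabla f(x^k)^\top d^k \le \epsilon_2$), followed by summing the total decrease against $\Delta = f(x^0) - f_{\min}$. Your additional remarks on strict feasibility of the iterates and on making the Dikin-ellipsoid containment rigorous are sensible refinements of the same argument rather than a departure from it.
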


As far as we know, the above is a first result along the lines of reaching a second-order stationary solution for nonconvex optimization with nonconvex constraints. For nonconvex optimization with convex constraints, there are a few recent papers aiming to reach a second-order stationary solution; see~\cite{HL2023,DS2024,WWGLZ2024}. Moreover, in these approaches, the Hessian Lipschitz property with respect to the local norm defined by the self-concordant barrier function is assumed. That assumption, though plausible is hard to verify in practice. A nice feature of the above proposed approach is that we no longer need that assumption, in addition to the feature that the constraint set is no longer assumed to be convex.

\section{Numerical Experiments} \label{numerical} 
In this section, we test the numerical performance of GHMA, CEB, and CEAS with barrier algorithms on several smooth and nonsmooth constrained nonconvex optimization problems. In the later discussion, we will omit the ``with barrier'' terms and directly use CEAS for simplicity. For CEB and CEAS methods, their subproblems are solved via first order methods, and they will be skipped in nonsmooth test problems because these two algorithms are specifically designed for smooth problems. For all three methods, their algorithmic parameters are selected exactly according to the theory, with the involved Lipschitz/H\"older constants obtained by simple numerical estimation. We take the fmincon solver in Matlab as the standard benchmark to compare. To maintain the feasibility of the iterations, we let fmincon execute the interior point algorithm.

\subsection{Fair machine learning}
First, to test the general nonsmooth setting, we consider the following fair machine learning problem and solve it with GHMA algorithm.   We consider a sparse logistic regression setting with a dataset of $n$ observations $\{(a_i, b_i)\}_{i=1}^n$, where $a_i \in \mathbb{R}^d$ represents the $d$-dimensional feature vector for the $i$-th observation, and $b_i \in \{0, 1\}$ represents the binary responses. Then the log-likelihood function will be 
\[
\ell(\beta) = \frac{1}{n}\sum_{i=1}^{n} \left[ b_i \log\left(\frac{1}{1 + \exp(-a_i^\top \beta)}\right) + (1 - b_i) \log\left(1 - \frac{1}{1 + \exp(-a_i^\top \beta)}\right) \right],
\]
where $\beta \in \mathbb{R}^d$ are the parameters to be estimated. Suppose the $n$ data points actually consist of $K$ groups $C_k$, $k\in[K]$ with significantly different sizes $|C_k|$ and moderately different ground truth weight  parameters $\beta_k$, that is, each $b_i\sim\mathrm{Bernoulli}(\sigma(a_i^\top\beta_k))$ if data $i$ belongs to the group $C_k$, where $\sigma(\cdot)$ is the sigmoid function.

Because the sizes $|C_k|$ are very unbalanced, on the one hand, directly maximizing $\ell(\beta)$ will inarguably bias towards the majority group, while the under-represented groups with relatively small sample sizes will suffer. The best solution would be training the regression model separately for each group $C_k$ and keep the fitted weight $\hat{\beta}_k$. Then if a data point comes from group $C_k$, we use the corresponding weight $\hat{\beta}_k$ to predict the response.  Yet, on the other hand, the privacy concerns may usually prevent many actual users to reveal their true group identities, and therefore, a unified model that performs equally well for all groups would still be desirable. Therefore, we may also compute the log-likelihood function of each class
\[
\ell_j(\beta) =  \frac{1}{|C_j|}\sum_{i\in C_j} \left[ b_i \log\left(\frac{1}{1 + \exp(-a_i^\top \beta)}\right) + (1 - b_i) \log\left(1 - \frac{1}{1 + \exp(-a_i^\top \beta)}\right) \right], j\in[K]
\]
and formulate the following fairness constrained optimization problem  
\begin{eqnarray}
	\label{prob:fair}
	&\min_{x} &-\ell(\beta) + \mu\|\beta\|_1\nonumber\\
	&\textrm{s.t.} & -\ell_i(\beta)\leq -(1+\eta)\ell_j(\beta), \quad 1\leq i\neq j \leq K.
\end{eqnarray}
We should note that, even though $\ell$ is convex, the constraints are nonconvex. 

We set $\eta=0.05$ such that the performance measure, in terms of log-likelihood function, differs for at most 5\% between different groups. 
And we set $K=2$ and $n=4000$ in the experiments. For the 2 groups, the minority group $C_1$ accounts for 20\% training data, and the majority group $C_2$ accounts for 80\% training data. For feature dimension, we test the cases with dimension $d\in\{200,1000\}$. For sparsity level, we test the cases with $\mu\in\{2^{-7}, 2^{-8}, 2^{-9}, 2^{-10}\}$. Due to the nonsmoothness, CEB, CEAS, and fmincon do not work for this instance, we only test GHMA and plot the function value gap and KKT-violation in Figure \ref{fair_fig}. Also because the curves are for different problem sizes and parameter settings under the same algorithm GHMA, plotting running time will not bring much insight. Therefore, we will use \#iterations as the x-axis of the figures.   
\begin{figure}[h] 
	\centering
	\begin{subfigure}{.255\linewidth}
		\includegraphics[width=\linewidth]{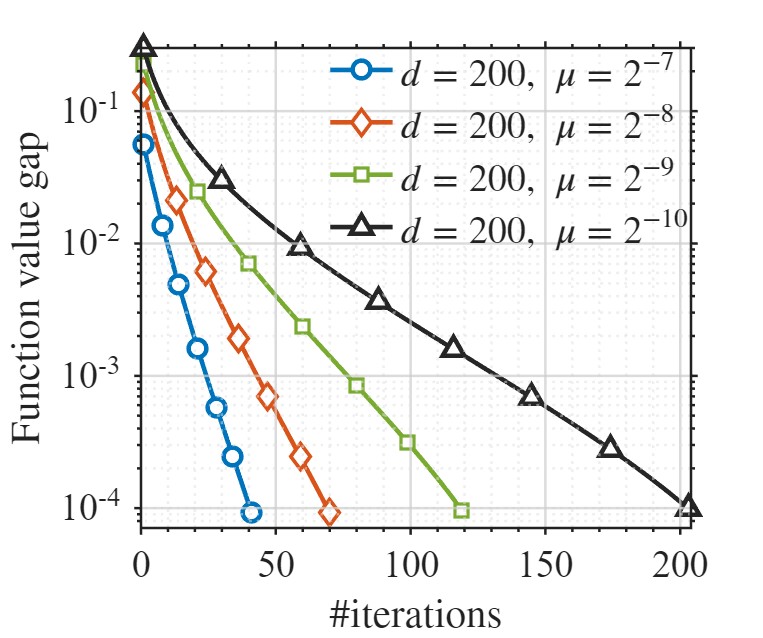}  
	\end{subfigure}\hspace{-0.25cm} 
	\begin{subfigure}{.255\linewidth}
		\includegraphics[width=\linewidth]{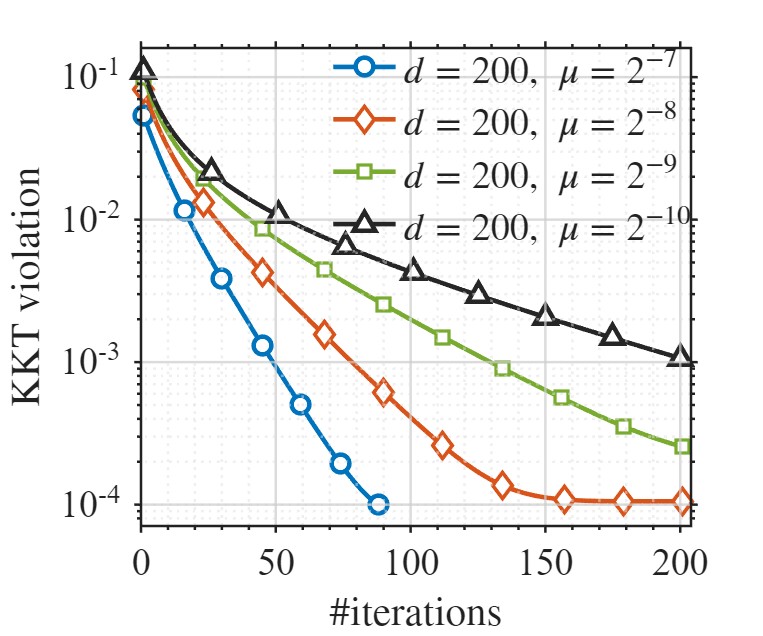} 
	\end{subfigure}\hspace{-0.25cm}
	\begin{subfigure}{.255\linewidth}
		\includegraphics[width=\linewidth]{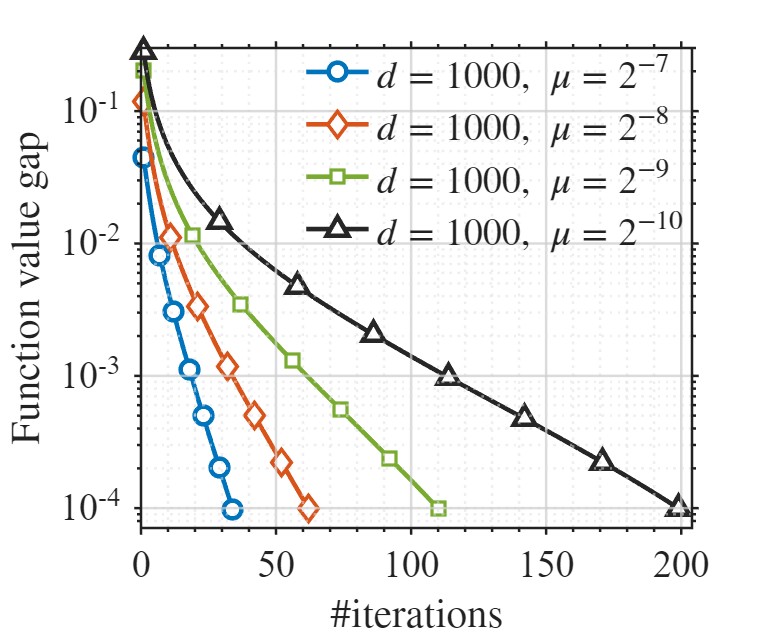}  
	\end{subfigure}\hspace{-0.25cm}
	\begin{subfigure}{.255\linewidth}
		\includegraphics[width=\linewidth]{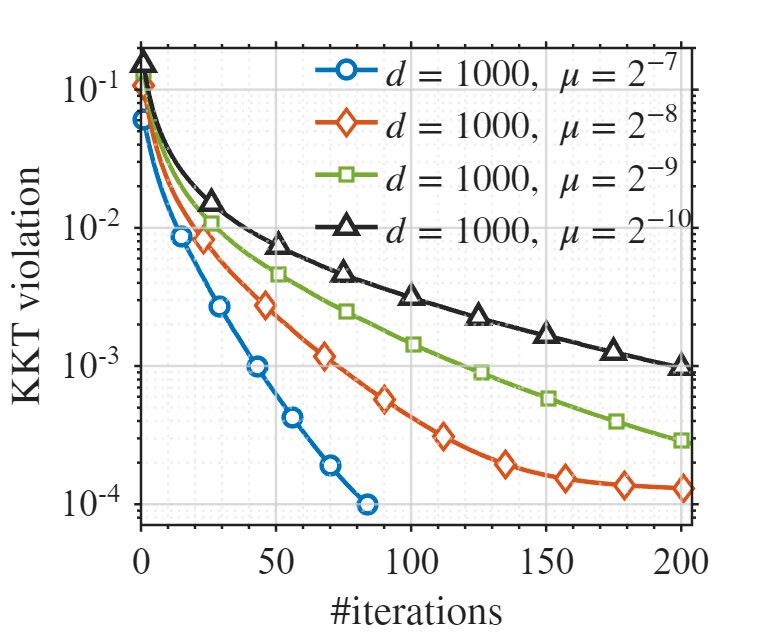} 
	\end{subfigure}
	
	\caption{The performance of GHMA under different sizes and sparsity parameters.}
	\label{fair_fig}
\end{figure}
Because the problem is nonconvex, the globally optimal solution is not known. Therefore, we compute the function value gap w.r.t. the solution output by GHMA for sufficiently long time with several different initializations. 

Finally, to illustrate the effect of the fairness constraints in problem \eqref{prob:fair}, we present the accuracy rates of the output $\hat{\beta}$ among $C_1$ and $C_2$, and compare it to the output of standard logistic regression without fairness constraints.  As the behavior of the instances $d=200$ and $d=1000$ are similar, we omit the $d=200$ instance. 

\begin{table}[h]
	\centering
	\caption{The effect of fairness constraints, under instance $d=1000$.  }
	\renewcommand{\arraystretch}{1.18}
	\setlength{\tabcolsep}{15pt}
	\begin{tabular}{c cc cc}
		\toprule
		& \multicolumn{2}{c}{With fairness constraints} 
		& \multicolumn{2}{c}{Without fairness constraints} \\
		\cmidrule(lr){2-3} \cmidrule(lr){4-5}
		$\mu$ & $C_1$ acc. & $C_2$ acc. & $C_1$ acc. & $C_2$ acc. \\
		\midrule
		$2^{-7}$  & 83.50\% & 86.31\% & 78.12\% & 87.31\% \\
		$2^{-8}$  & 91.50\% & 93.69\% & 87.12\% & 94.78\% \\
		$2^{-9}$  & 96.63\% & 96.72\% & 92.63\% & 97.97\% \\
		$2^{-10}$ & 98.12\% & 98.22\% & 95.15\% & 99.19\% \\
		\bottomrule
	\end{tabular} 
	\label{tab:fairness_acc}
\end{table}

\subsection{Copositive programming} 
To incorporate the other algorithms and the fmincon benchmark, we choose (the nonconvex and low-rank approximation of) the copositive programming problem that we mention in the introduction as our second test example:
\[\begin{aligned}
	\min_{X,Y} \quad & \langle C_0,X\rangle\\
	\textrm{s.t.} \quad & \langle C_i,X\rangle \le b_i, i=1,2...,m\\
	&X=YY^\top,Y\ge 0,
\end{aligned}
\]
where $X\in \mathbb{R}^{n\times n}$, $Y\in \mathbb{R}^{n\times r}$, $C_i\in \mathbb{R}^{n\times n}$ for $i=0,1,2,...,m$, and $C_0\succeq 0$. In the algorithmic implementation, we will automatically eliminate the $X$ variable by substituting $X = YY^\top$ to the objective function and constraints. We set $n=100$, $r=60$ and $m=200$, which gives problems with 6000 variables and 200 constraints. And we generate three random instances for numerical tests. Figure \ref{compare_all} provides the performance of GHMA, CEB, and CEAS, with the standard Matlab nonlinear programming solver fmincon being the benchmark.  Similar to the fair machine learning instance, we compute the function value gap w.r.t. the solution output by the best performing algorithm (GHMA) for sufficiently long time. For the KKT violation plot, we will omit the curve of fmincon as this solver does not have the option to record this quantity along the iterations. Because the subproblems of different algorithms are significantly different, plotting figures w.r.t. iterations will not be informative. Therefore, we will plot the curves w.r.t. running time in this experiment. The results are provided in Figure~\ref{compare_all}.

\begin{figure}[h]
	\centering
	% first row: objective gap
	\begin{subfigure}{.3\linewidth}
		\includegraphics[width=\linewidth]{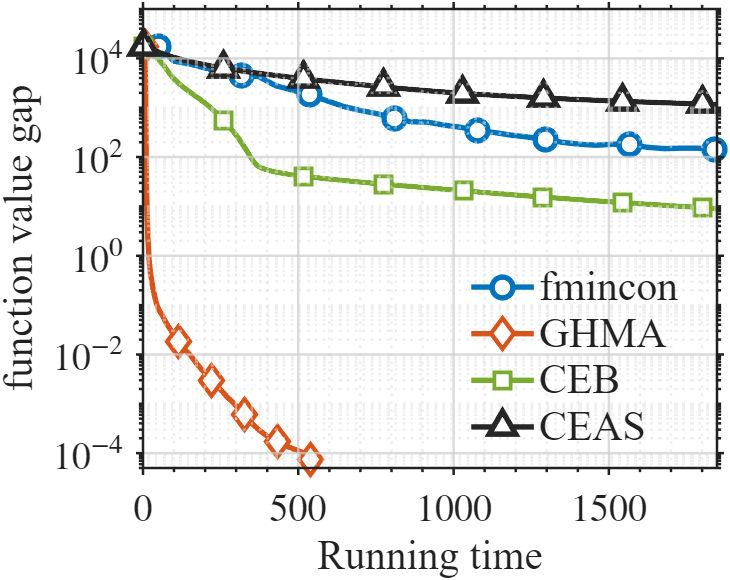}
	\end{subfigure}\hspace{0.35cm}
	\begin{subfigure}{.3\linewidth}
		\includegraphics[width=\linewidth]{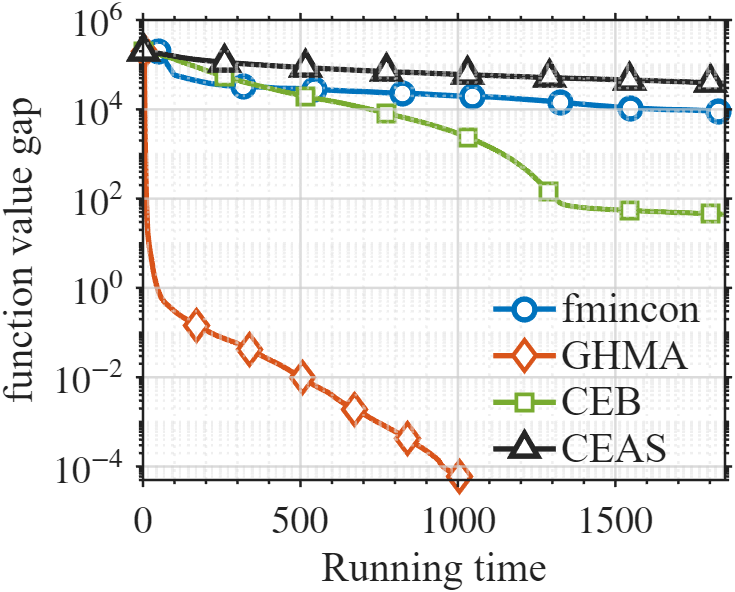}
	\end{subfigure}\hspace{0.35cm}
	\begin{subfigure}{.3\linewidth}
		\includegraphics[width=\linewidth]{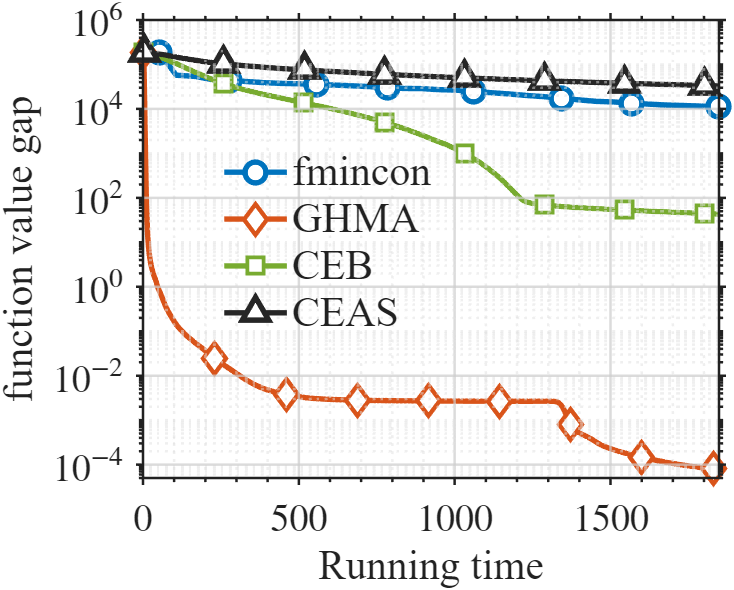}
	\end{subfigure}
	
	\vspace{0.25cm}
	
	% second row: KKT violation
	\begin{subfigure}{.3\linewidth}
		\includegraphics[width=\linewidth]{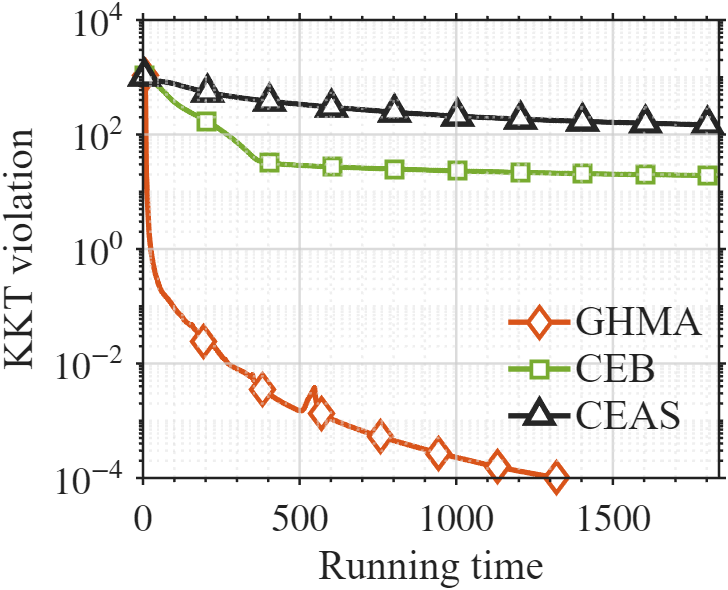}
	\end{subfigure}\hspace{0.35cm}
	\begin{subfigure}{.3\linewidth}
		\includegraphics[width=\linewidth]{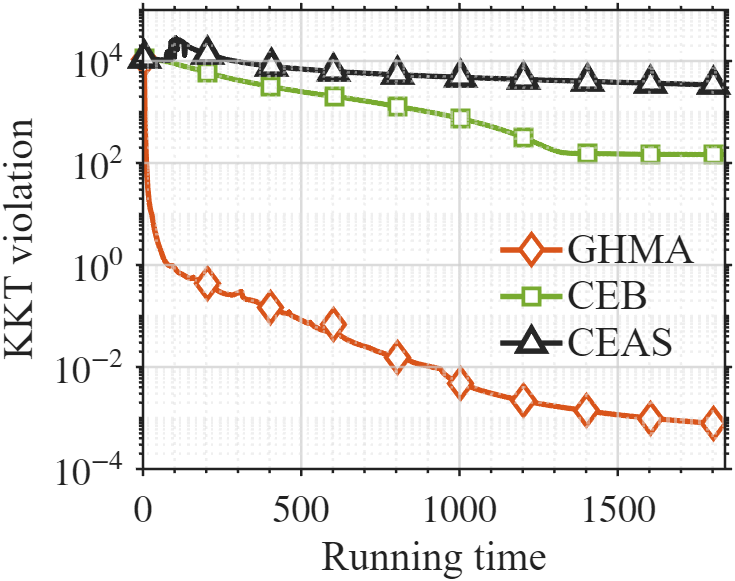}
	\end{subfigure}\hspace{0.35cm}
	\begin{subfigure}{.3\linewidth}
		\includegraphics[width=\linewidth]{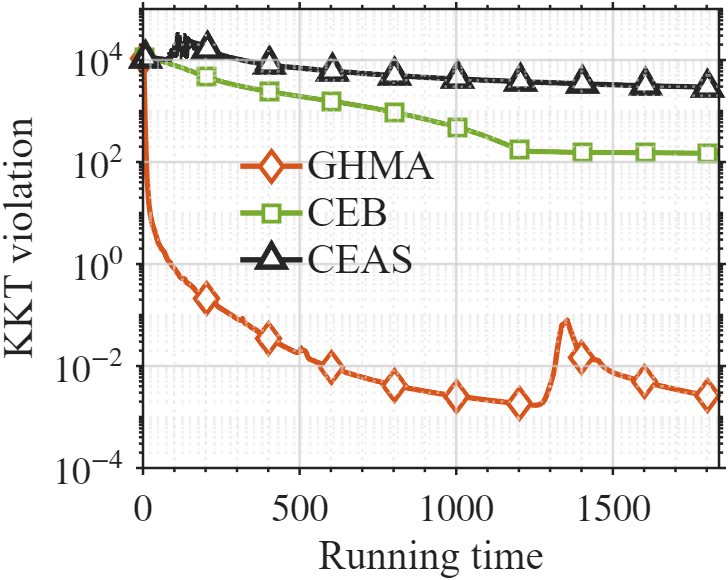}
	\end{subfigure}
	
	\caption{The function value gap and KKT violation curves of the synthesized instances.  }
	\label{compare_all}
\end{figure}

\subsection{Copositive relaxation of stable set problem} 
Finally, we test (the nonconvex and low-rank approximation of) the copositive relaxation of the stable set problem, which is a (non-synthesized) special case of the general copositive programming problem tested in the previous subsection. As discussed in the introduction, several NP-hard optimization problems can
be written as linear programs over the convex cone of copositive matrices; see e.g.~\cite{burer2009copositive}. Define the set of completely positive matrix as 
\[
\mathcal{C}_n^*=\bigg\{X\in \mathbb{R}^{n\times n}: X=YY^\top, \mbox{ for } Y\in\mathbb{R}^{n\times r}, Y\geq0, r=n(n+1)/2\bigg\}, 
\]
where $r=n(n+1)/2$ is suggested by the Carath\'eodory theorem \cite{rockafellar1997convex}. Let $\mathcal{G}=(\mathcal{V},\mathcal{E})$ be a graph with vertex set $\mathcal{V}$ and edge set $\mathcal{E}$. For an $n$-node graph with $\mathcal{V}=[n]$, it is known (see~\cite{de2002approximation}) that the maximum stable set number $\alpha^*$ for $\mathcal{G}$ satisfies 
\[\alpha^*=\max \left\{\langle e e^\top,X\rangle: X \in \mathcal{C}_n^*, \mathrm{Tr}(X)=1, X_{i j}=0, \forall(i, j) \in \mathcal{E}\right\} ,\]
where $e$ stands for the vector of all ones. 
Then it is equivalent to setting $\delta=0$ for 
\[\alpha^*(\delta)=\max \left\{\langle e e^\top,X\rangle: X \in \mathcal{C}_n^*, \mathrm{Tr}(X)\leq 1, X_{i j}\leq \delta, \forall(i, j) \in \mathcal{E}\right\} .\]
We test the algorithms on cyclic graph $\mathcal{G}_n$ with $n$ vertices and relax $\delta=10^{-4}$. In addition, we adopt the nonconvex rank-2 approximation of the problem to obtain the formulation:
\[\begin{aligned}
	\max_{X,Y} \quad & \langle ee^\top, X\rangle\\
	\textrm{s.t.} \quad & X=YY^\top,\,\,\,\, Y\in\mathbb{R}^{n\times 2} \\
	&Y\ge 0, \,\,\,\, \mathrm{Tr}(X)\leq 1\\
	& X_{ij}\leq \delta, \,\,\,\, \forall (i,j)\in\mathcal{E}
\end{aligned}
\] 
Similar to the general copositive program experiments, in the algorithmic implementation, we eliminate the $X$ variable. Because the running time for different algorithms are significantly different in this set of experiments, we choose to report the results by table. For the cyclic graph $\mathcal{G}_n$ that we test, its maximum stable set number admits a closed form solution. Therefore, to test the efficiency of each algorithm, we run each algorithm for a maximum of 30 seconds, and report the time among the three trials at which the algorithm reaches a sufficiently small objective function value gap $|\langle e e^\top,X\rangle-\alpha^*|<0.1$ in Table \ref{tab:StableNumber}. Because $\alpha^*$ is known to be an integer, we only need to select a moderate tolerance of $0.1$. If the algorithm does not reach the tolerance in 30 seconds, we report ``---'' in the table.

\begin{table}[t]
	\centering
	\caption{Running time of tested algorithms, under instances $\mathcal{G}_{10}$--$\mathcal{G}_{40}$.}
	\label{tab:StableNumber}
	\renewcommand{\arraystretch}{1.15}
	\setlength{\tabcolsep}{17pt}
	\begin{tabular}{c cccc}
		\toprule
		& \multicolumn{4}{c}{The tested algorithms} \\
		\cmidrule(lr){2-5}
		Graph & \texttt{fmincon} & GHMA & CEB & CEAS \\
		\midrule
		$\mathcal{G}_{10}$ & ${0.0364}\,\mathrm{s}$ & ${0.0041}\,\mathrm{s}$ & $23.4347\,\mathrm{s}$ & $ 3.7439\,\mathrm{s}$ \\
		$\mathcal{G}_{20}$ & $0.0740\,\mathrm{s}$ & ${0.0635}\,\mathrm{s}$ & $ 13.1789\,\mathrm{s}$ & $0.9943\,\mathrm{s}$ \\
		$\mathcal{G}_{30}$ & $0.2491 \,\mathrm{s}$ & ${0.0450}\,\mathrm{s}$ & \textemdash & \textemdash \\
		$\mathcal{G}_{40}$ & $0.5253\,\mathrm{s}$ & ${0.1068}\,\mathrm{s}$ & \textemdash & $ 5.7729\,\mathrm{s}$ \\
		\bottomrule
	\end{tabular} 
\end{table}

\section{Conclusions}
\label{conclusion}

In this paper, we proposed and studied the iteration complexity properties of a convex majorization scheme for nonconvex constrained optimization models. Specific implementations of the convex majorization scheme have been considered under suitable constraint qualifications, pending the gradient H\"{o}lderian continuous properties of the objective in question, or the availability of the higher-order information thereof. Our numerical experiments suggest practical efficacy of the proposed scheme and its various implementations, alongside the guaranteed theoretical iteration complexity bounds. 

%\backmatter

\iffalse
\section*{Declarations}

\noindent{\bf Funding acknowledgment. } 
Junyu Zhang is supported by the 
MOE AcRF grant A-0009530-05-00; Shuzhong Zhang is supported by the NSF grant CMMI-2228034. \vspace{0.3cm}

\noindent{\bf Competing interests.} This work does not have any competing interests. 
\vspace{0.3cm}

\noindent {\bf  Data availability.}
No external datasets were used or analyzed in this study. All problem instances reported in the numerical experiments were generated by the code developed for this work.\vspace{0.3cm}\\
\noindent{\bf Code availability.}
The code supporting the findings of this study is available from the first author upon reasonable request.
\fi

%%===================================================%%
%% For presentation purpose, we have included        %%
%% \bigskip command. Please ignore this.             %%
%%===================================================%%
\bigskip

\bibliographystyle{plain}
\bibliography{sample}% common bib file
%% if required, the content of .bbl file can be included here once bbl is generated
%%\input sn-article.bbl

\appendix

	\section{CEAS Counterexample}
	\label{cexample}
	
	\begin{example}
		\label{example:diminishing}
		Consider the objective function $f(z) :=  \arctan\left(x/y\right)$ defined on the set $\mathcal{X} =\left\{z=(x,y): x>0, \|z\|\geq1/2\right\}$\footnote{As the constructed example will never exit $\mathcal{X}$, one can arbitrarily and smoothly extend the value of $f$ beyond $\mathcal{X}$.}, and $c(z):=\|z\|^2-1$.  If we run 
		Algorithm CEAS 
		to solve
		$$\min_z f(z)\,\,\,\,\mathrm{s.t.}\,\,\,\,c(z)\leq 0$$ with $\alpha_k\equiv1$ and initial point $z_0 = (1-\epsilon_0)(\sin\theta_0,-\cos\theta_0)$. Then $\{z_k\}$ converges geometrically to a non-stationary boundary point for any $\theta_0<\pi$, as long as $\epsilon_0\leq\min\{1/32,(\pi-\theta_0)^2/100\}$. 
	\end{example}
	\begin{proof}
		First, the function is essentially the negative radian of the angle between the vector $z=(x,y)$ and $(0,-1)$; see Figure~\ref{fig:example:diminishing}. By direct computation, we have 
		$\nabla f(z) = \frac{(y,-x)^\top}{\|z\|^2}$ and $\nabla c(z) = 2z$. In addition, we also have $\|\nabla^2 f(z)\|\leq \frac{\sqrt{2}}{\|z\|^2}\leq 4\sqrt{2}$ for all $z\in\mathcal{X}$. Then it is sufficient to set $L = 4\sqrt{2}$ and $L_1 = 2$ when running algorithm \eqref{algo:Affine-Scaling}. 
		Also note that $\nabla f(z)$ is always perpendicular to $\nabla c(z)$. Consequently, the two axes of the Dikin ellipsoid 
		$$\cE_k:=\left\{p\in\RR^2:p^\top\left(\frac{2}{-c(z_k)}\cdot I + \frac{4}{c^2(z_k)}\cdot z_kz_k^\top\right)p\leq 1\right\}$$
		will align with $\nabla f(z)$ and $\nabla c(z)$, respectively.  
		
		Now consider the iteration $z_k = (1-\epsilon_k)\cdot(\sin\theta_k,-\cos\theta_k)$. Then $\frac{\|\nabla f(z_k)\|}{L} = \frac{1}{4\sqrt{2}(1-\epsilon_k)}\geq \sqrt{\frac{-c(z_k)}{2}}$ as long as  $\epsilon_k\leq 1/32$, where $-c(z_k) = 1-(1-\epsilon_k)^2 = 2\epsilon_k-\epsilon_k^2$. That is, $-\nabla f(z_k)/L$ is longer than the axis aligning with it, and is always out of $\cE_k$, see the second subfigure of Figure \ref{fig:example:diminishing}. Consequently, $z_{k+1}$ will be the end point of the long axis. Hence
		$\|z_{k+1}-z_k\|^2 = \frac{-c(z_k)}{2} = \epsilon_k-\frac{\epsilon_k^2}{2}$ and   
		$$\epsilon_{k+1} = 1-\sqrt{\|z_k\|^2 + \|z_k-z_{k+1}\|^2} = 1-\sqrt{1-\epsilon_k+\epsilon_k^2/2}\leq \frac{16}{25}\epsilon_k$$
		as long as $\epsilon_k\leq 1/32$. Meanwhile, in terms of the radian increment, we have
		$$\Delta\theta_k \leq \tan\Delta\theta_k = \frac{\|z_k-z_{k+1}\|}{\|z_k\|} = \frac{\sqrt{\epsilon_k-\frac{\epsilon_k^2}{2}}}{1-\epsilon_k} < 2\sqrt{\epsilon}_k$$
		as long as $\epsilon_k\leq 1/32$. Combined with $\sqrt{\epsilon_{k+1}}\leq \frac{4}{5}\sqrt{\epsilon_k}$, we have 
		$$\sum_{k=0}^{\infty}\Delta\theta_k < 2\sum_{k=0}(4/5)^k\sqrt{\epsilon_0}\leq 10\sqrt{\epsilon_0}.$$
		Therefore, for any $\theta_0<\pi$, as long as we choose $\epsilon_0\leq\min\{1/32,(\pi-\theta_0)^2/100\}$, the sequence $z_k = (1-\epsilon_k)\cdot(\sin\theta_k,-\cos\theta_k)$ converges   geometrically to some $\bar{z} = (\sin\bar{\theta},-\cos\bar{\theta})$, with $\bar{\theta}<\pi,$ which is not a stationary point of the problem. A numerical illustration of this phenomena can be found in Figure \ref{fig:example:diminishing}, which can be resolved by adopting a diminishing stepsize. 
		\begin{figure}[htbp]
			\centering
			\begin{subfigure}{0.224\textwidth}
				\centering \includegraphics[width=\linewidth]{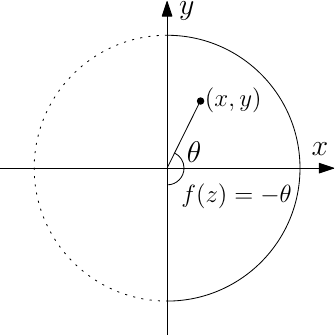}
				\caption{Objective function}
			\end{subfigure}
			\hfill
			\begin{subfigure}{0.24\textwidth}
				\centering \includegraphics[width=\linewidth]{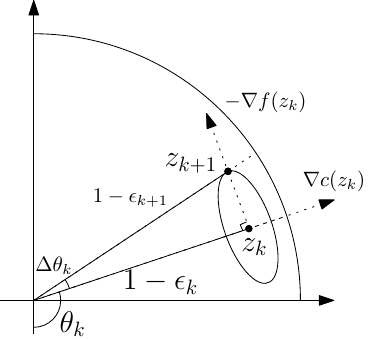}
				\caption{One iteration}
			\end{subfigure}
			\hfill
			\begin{subfigure}{0.24\textwidth}
				\centering \includegraphics[width=\linewidth]{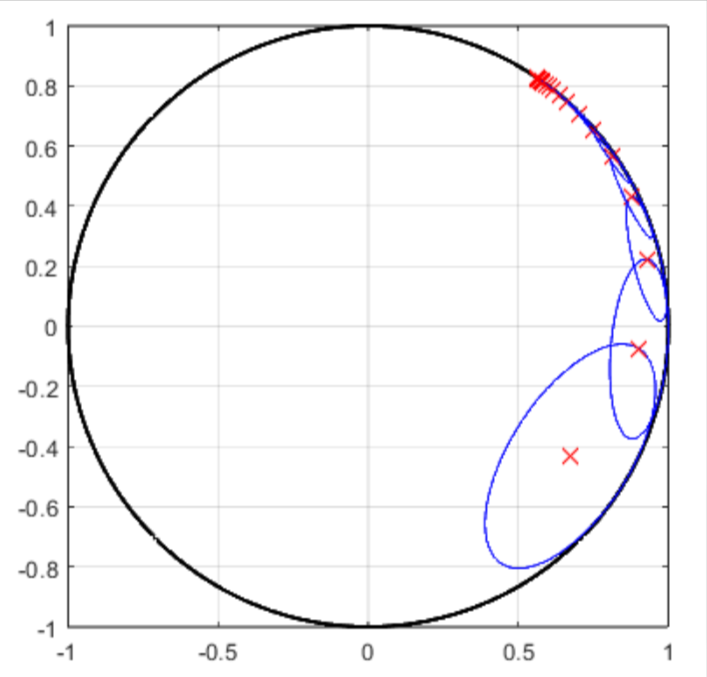}
				\caption{Constant $\alpha_k\equiv1$}
			\end{subfigure}
			\hfill
			\begin{subfigure}{0.24\textwidth}
				\centering \includegraphics[width=\linewidth]{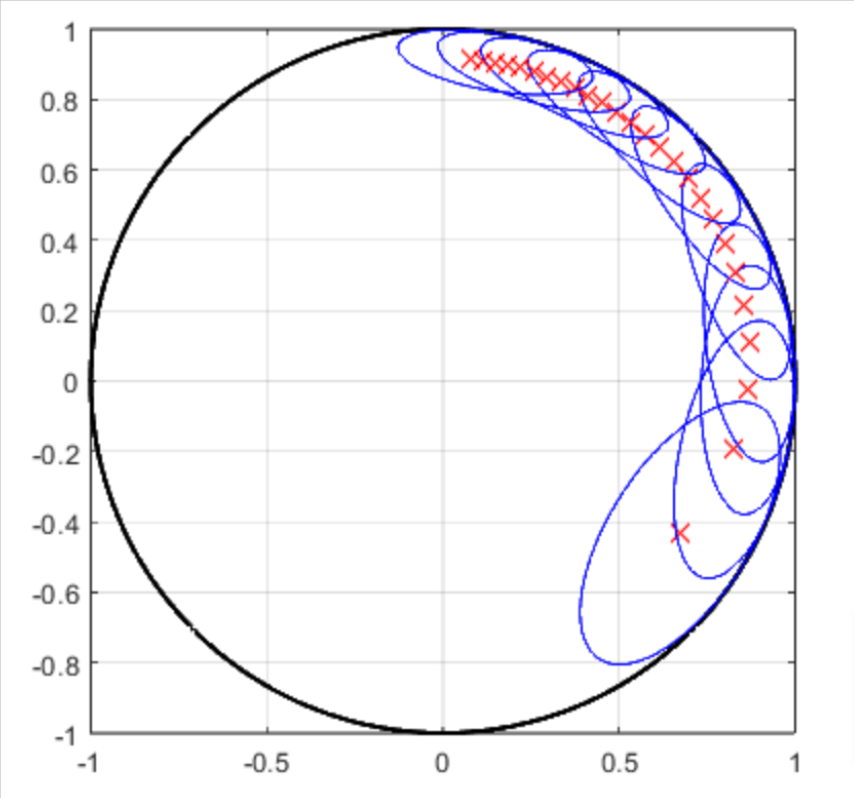}
				\caption{Diminishing $\alpha_k = \frac{1}{\sqrt{k}}$}
			\end{subfigure}
			\caption{Graphical illustration of Example \ref{example:diminishing}}
			\label{fig:example:diminishing}
		\end{figure} 
	\end{proof}
	
	\begin{theorem}
		\label{theorem:affine-scaling}
		Let $\{x_k\}_{k\geq0}$ be generated by 
		Algorithm CEAS. 
		If there exists $x^*$ s.t. $x_k \to x^*$ as $k\to\infty$, then $x^*$ is feasible and there exists $\lambda^*$ s.t. $\nabla f(x^*) + \sum_{i=1}^m\lambda^*_i\nabla c_i(x^*) = 0$, and we have $\lambda_i^*c_i(x^*)=0$ for all $1\le i \le m$. 
	\end{theorem}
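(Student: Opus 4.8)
The plan is to read the KKT system at $x^*$ off the first-order optimality conditions of the Dikin subproblems in \eqref{algo:Affine-Scaling}, taken along a subsequence on which the search direction and its ellipsoid multiplier both vanish. Let $\beta_k\ge0$ be the multiplier of the constraint $p^\top H_kp\le1$, so the minimizer $p_k$ obeys
\[
\nabla f(x_k)+Lp_k+2\beta_kH_kp_k=0,\qquad \beta_k\bigl(p_k^\top H_kp_k-1\bigr)=0.
\]
I would first dispose of the two robust facts. Feasibility of $x^*$ is immediate: $\cE_k$ is an origin-symmetric convex body with $\cE_k\subseteq\{p:c_i(x_k+p)<0,\ i\in[m]\}$, so $\alpha_kp_k\in\cE_k$ for $\alpha_k\in(0,1]$, every iterate is strictly feasible, and $c_i(x^*)\le0$ by continuity. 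Next, comparing the subproblem value at $p_k$ with its value at $p=0$ gives $\nabla f(x_k)^\top p_k+\tfrac L2\|p_k\|^2\le0$; substituting the scalar identity $\nabla f(x_k)^\top p_k=-L\|p_k\|^2-2\beta_k$ (obtained by dotting the stationarity equation with $p_k$ and using $p_k^\top H_kp_k=1$) into the descent lemma for the $L$-Lipschitz gradient of $f$ yields the refined inequality
\[
f(x_{k+1})\le f(x_k)-\tfrac{L\alpha_k}{2}\|p_k\|^2-2\alpha_k\beta_k.
\]
Telescoping (with $f$ bounded below on the level set) gives both $\sum_k\alpha_k\|p_k\|^2<\infty$ and $\sum_k\alpha_k\beta_k<\infty$, so with $\sum_k\alpha_k=\infty$ I obtain $\liminf_k(\|p_k\|+\beta_k)=0$ and may pass to a subsequence along which $x_k\to x^*$, $\|p_k\|\to0$, and $\beta_k\to0$. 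If $\beta_k=0$ along this subsequence the subproblem is unconstrained and the limit forces $\nabla f(x^*)=0$, giving the conclusion with $\lambda^*=0$; hence I assume $\beta_k>0$ eventually.

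Expanding $H_k$ through \eqref{defn:HessBarier}, the stationarity equation becomes
\[
\nabla f(x_k)+\Bigl(L+2\beta_k\sum_{i=1}^m\tfrac{L_i}{-c_i(x_k)}\Bigr)p_k+\sum_{i=1}^m\lambda_i^{(k)}\nabla c_i(x_k)=0,\qquad \lambda_i^{(k)}:=\frac{2\beta_k\,\nabla c_i(x_k)^\top p_k}{c_i(x_k)^2},
\]
which is already in KKT form apart from the isotropic residual $\rho_k:=2\beta_k\bigl(\sum_i\tfrac{L_i}{-c_i(x_k)}\bigr)p_k$ inherited from the identity part of the barrier Hessian. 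For an inactive index ($c_i(x^*)<0$) the weights $\tfrac{L_i}{-c_i(x_k)}$ and $c_i(x_k)^{-2}$ stay bounded, and since $\beta_k\to0$ and $\|p_k\|\to0$ one gets $|\lambda_i^{(k)}|\le C\beta_k\|p_k\|\to0$; thus $\lambda_i^*:=\lim\lambda_i^{(k)}=0$, which delivers complementary slackness $\lambda_i^*c_i(x^*)=0$ for the inactive indices, while for the active indices $c_i(x^*)=0$ makes it automatic. Passing to a further subsequence so that $p_k/\|p_k\|\to\hat p$ and the active multipliers converge, the remaining task is to take limits over the active indices in the displayed equation.

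The step I expect to be the main obstacle is neutralizing the isotropic residual $\rho_k$, i.e.\ showing that $\lim\rho_k$ lies in $\mathrm{span}\{\nabla c_i(x^*):c_i(x^*)=0\}$ so it can be folded into the active multipliers, leaving $\nabla f(x^*)+\sum_i\lambda_i^*\nabla c_i(x^*)=0$. The difficulty is structural: when $x^*$ is on the boundary the identity term $\bigl(\sum_i\tfrac{L_i}{-c_i(x_k)}\bigr)I$ of $H_k$ blows up, the ellipsoid shrinks in \emph{every} direction, so $\|p_k\|\to0$ carries no first-order information, and the quantity $\beta_k/\|p_k\|$ governing the magnitude of $\rho_k$ is an indeterminate $0/0$; inner-product tests against $\hat p$ only produce tautologies. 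My plan is to extract a further subsequence on which $\rho_k\to0$, using the sharpened bound $\sum_k\alpha_k\beta_k<\infty$ together with the feasibility estimate $|\nabla c_i(x_k)^\top p_k|\le -c_i(x_k)$ that follows from $p_k^\top H_kp_k=1$; self-concordance of $\tilde B_k$ on its unit Dikin ball would be invoked to compare $2\beta_kH_kp_k=2\beta_k\nabla^2\tilde B_k(0)p_k$ with $2\beta_k\bigl(\nabla\tilde B_k(p_k)-\nabla\tilde B_k(0)\bigr)$ up to a second-order-in-$p_k$ error, rewriting the whole term through constraint gradients with a controllable remainder. In the residual degenerate regime where the constraint-gradient part of $p_k^\top H_kp_k$ vanishes and the ellipsoid is asymptotically a shrinking Euclidean ball with $\hat p$ a first-order feasible descent direction, I would invoke $\sum_k\alpha_k=\infty$ once more — the very mechanism that repairs the counterexample of Example~\ref{example:diminishing} — to rule out convergence to a point possessing a persistent feasible descent direction and thereby conclude $\nabla f(x^*)=0$. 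Combining these, the limit of the stationarity equation yields $\nabla f(x^*)+\sum_i\lambda_i^*\nabla c_i(x^*)=0$ with $\lambda_i^*c_i(x^*)=0$, as claimed.
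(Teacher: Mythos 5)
Your setup is sound and matches the paper's through the summability stage: the stationarity system for the Dikin subproblem, the identity $\nabla f(x_k)^\top p_k=-L\|p_k\|^2-2\beta_k$, and the resulting bounds $\sum_k\alpha_k\|p_k\|^2<\infty$ and $\sum_k\alpha_k\beta_k<\infty$ all agree with the paper (whose multiplier $\rho_k$ equals your $2\beta_k$, and whose error term $R_k$ is exactly the norm of your isotropic residual). But there is a genuine gap precisely at the point you yourself flag as ``the main obstacle'': you never actually show that the residual $2\beta_k\bigl(\sum_i\tfrac{L_i}{-c_i(x_k)}\bigr)p_k$ vanishes along a subsequence. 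Choosing a subsequence with $\beta_k\to0$ does not do it, since feasibility only gives $\|p_k\|\sum_i\tfrac{L_i}{-c_i(x_k)}\le 1/\|p_k\|$, so the residual is controlled by the indeterminate ratio $\beta_k/\|p_k\|$. The two repair strategies you sketch (a self-concordance comparison of $H_kp_k$ with a gradient difference of $\tilde B_k$, and a ``persistent feasible descent direction'' argument) are not carried out, and it is not evident either would close the gap. The paper's resolution is an extended contradiction argument: assuming $\liminf_kR_k>0$, it upgrades the summability to $\sum_k\alpha_k\|p_k\|<\infty$ (note the first power of $\|p_k\|$), bounds the active multipliers via LICQ through the formula $\lambda_{k,A}=-[J_{k,A}J_{k,A}^\top]^{-1}J_{k,A}(\cdots)$, and then combines two descent inequalities for the constraint margins $-c_i(x_k)$ with an infinite-product estimate to conclude $c_{\min}(x^*)>0$, contradicting $\cA(x^*)\neq\emptyset$; only after this does $\liminf_kR_k=0$ allow passage to the limit. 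None of this machinery appears in your proposal, and it is the heart of the proof.

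A second, smaller gap: you assert that ``the active multipliers converge'' after passing to a further subsequence, but you never establish that they are bounded. For active $i$ the quantity $\lambda_i^{(k)}=2\beta_k\nabla c_i(x_k)^\top p_k/c_i(x_k)^2$ is another $0/0$ form, so boundedness is not automatic. The paper obtains it (and identifies the limit) from linear independence of the active constraint gradients at $x^*$, i.e.\ a LICQ-type condition, via the pseudo-inverse representation above; some such constraint qualification must enter your argument as well.
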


	We provide the following roadmap for the proof, for ease of referencing.

	\begin{figure}[h]
		\centering
		\includegraphics[width=1\linewidth]{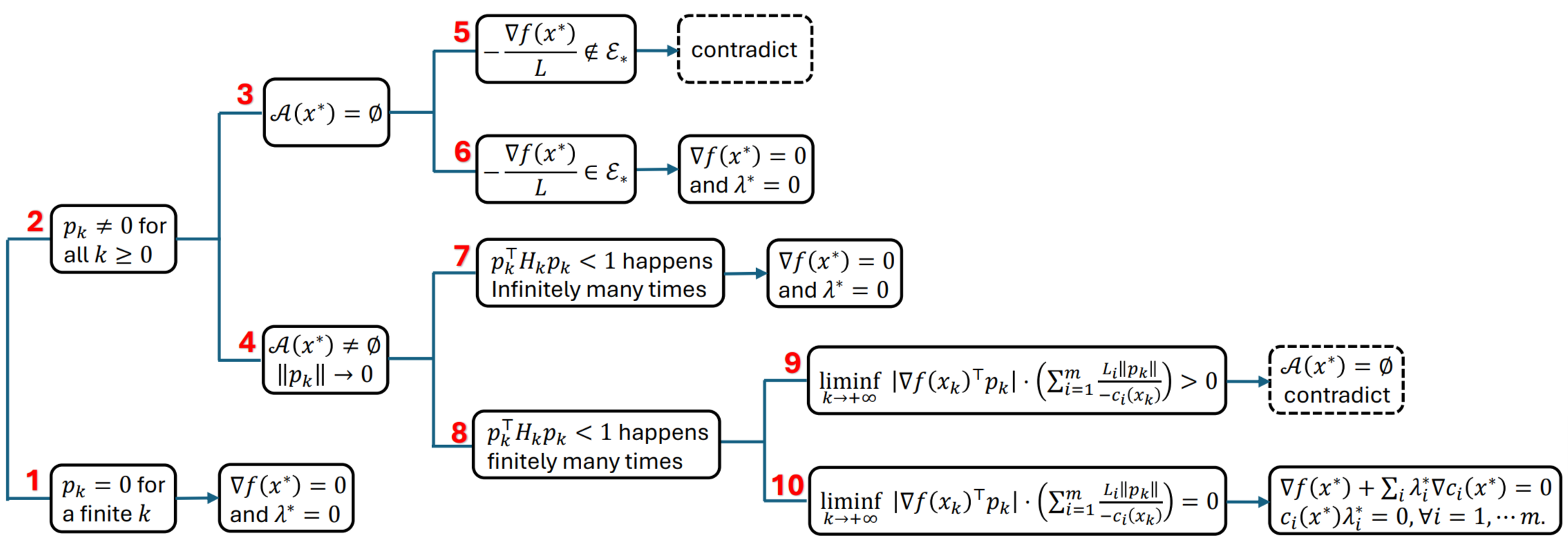}
		\caption{Proof sketch for Theorem \ref{theorem:affine-scaling}} 
	\end{figure}

	\begin{proof}
		First of all, the optimality condition of \eqref{algo:Affine-Scaling} gives  
		\begin{equation}
			\label{thm:AFS-0}
			\begin{cases}
				\nabla f(x_k) + Lp_k + \xi_kH_kp_k = 0,\\
				p_k^\top H_kp_k \leq 1,\qquad\qquad\,\, \xi_k\geq0,\\
				\xi_k(1-p_k^\top H_kp_k) = 0.
			\end{cases}
		\end{equation}
		To continue the discussion, let us clear up a few corner cases. 
		
		\textbf{Branch 1.} First, if $p_k=0$ for some finite $k_0\geq0$, then \eqref{thm:AFS-0} immediately indicates that $\nabla f(x_{k_0})=0$ then Theorem \ref{theorem:affine-scaling} holds trivially with $x^*=x_{k_0}$ and $\lambda^*=0$. Therefore, without loss of generality, we can assume $\nabla f(x^*)\neq0$ and $p_k\neq0$ for all $k\geq 0$ in the following discussion.
		
		\textbf{Branch 2.} As $p_k\neq0$ and $H_k\succ0$, we have $p_k^\top H_kp_k>0$. Then left multiplying $p_k^\top$ to the first equation of \eqref{thm:AFS-0} and rearranging the terms gives 
		\begin{equation}
			\label{thm:AFS-1}
			\xi_k = -\frac{\nabla f(x_k)^\top p_k+L\|p_k\|^2}{p_k^\top H_kp_k} \geq 0,
		\end{equation}
		and hence $\nabla f(x_k)^\top p_k \leq -L\|p_k\|^2$. Meanwhile, by the descent lemma, we have 
		\begin{eqnarray*}
			f(x_{k+1}) & \leq & f(x_k) + \alpha_k\nabla f(x_k)^\top p_k + \frac{L\alpha_k^2}{2}\|p_k\|^2\\
			& = & f(x_k) + \frac{\alpha_k}{2}\nabla f(x_k)^\top p_k + \frac{\alpha_k}{2}\nabla f(x_k)^\top p_k + \frac{L\alpha_k^2}{2}\|p_k\|^2\\
			& \overset{(i)}{\leq} & f(x_k) + \frac{\alpha_k}{2}\nabla f(x_k)^\top p_k - \frac{L\alpha_k(1-\alpha_k)}{2}\|p_k\|^2\\
			& \leq & f(x_k) + \frac{\alpha_k}{2}\nabla f(x_k)^\top p_k\\
			& \overset{(ii)}{\leq} & f(x_k) - \frac{L\alpha_k}{2}\|p_k\|^2\,.
		\end{eqnarray*}
		Both (i) and (ii) are done by applying $\nabla f(x_k)^\top p_k \leq -L\|p_k\|^2$ to the term $\frac{\alpha_k}{2}\nabla f(x_k)^\top p_k$. Telescoping the last two inequalities gives the following two bounds
		\begin{equation}
			\label{thm:AFS-2}
			\sum_{k=0}^{\infty} -\alpha_k\nabla f(x_k)^\top p_k \leq \frac{2\Delta_f}{L}\qquad\mbox{and}\qquad\sum_{k=0}^{\infty} \alpha_k\|p_k\|^2 \leq \frac{2\Delta_f}{L} 
		\end{equation}
		where $\Delta_f:=f(x_0)-f^*$ and $f^*$ is the optimal value. Both bounds will be crucial in the later analysis. In particular, as $\sum_{i=0}^{\infty}\alpha_k=+\infty$, the second bound indicates that $\liminf_{k\to\infty}\|p_k\|=0$. 
		
		Our next step is to prove that $\lim_{k\to\infty}\|p_k\|=0$. And this proof will consist of two cases, based on whether the active set  $\cA(x^*) \neq \emptyset$ or $\cA(x^*) = \emptyset$.

		\textbf{Branch 3.} Suppose $\cA(x^*) = \emptyset$. In this case, $c_i(x^*)<0$ for all $i=1,2,\cdots,m$. Note that the subproblem is in fact a projection problem
		$$p^*(x):=\argmin_p \,\,\Big\|p+\frac{1}{L}\nabla f(x)\Big\|^2\qquad\mbox{s.t.}\qquad p^\top\nabla^2h_{x}(0)p\leq 1$$
		parameterized by the iterate $x$. With a slight abuse of notation, we simply denote $p_k=p^*(x_k)$ and $p_*=p(x^*)$.  Note that objective function and constraint set are continuous in terms of the parameter $x$. Therefore, Berge's maximum theorem holds and the optimal value is a continuous function of $x$, consequently the assumption $x_k\to x^*$ gives 
		\begin{equation}
			\label{thm:AFS-3}
			\lim_{k\to\infty} \Big\|p_k+\frac{1}{L}\nabla f(x_k)\Big\|^2 = \Big\|p_{*}+\frac{1}{L}\nabla f(x^*)\Big\|^2.
		\end{equation}
		\textbf{Branch 5.} If $\big\|p_{*}+\frac{1}{L}\nabla f(x^*)\big\|^2>0$, then there exist $k_0\geq0$ such that $\big\|p_{k}+\frac{1}{L}\nabla f(x_k)\big\|^2>0$ for all $k\geq0$. In projection problems, this means that $-\frac{1}{L}\nabla f(x_k)\notin\cE_k$ and the constraint $p_k\in\mathcal{E}_k$ must be active for all $k\geq k_1$. Also note that 
		$$\lim_{k\to\infty}H_k  = \left(\sum_{i=1}^m\frac{L_i}{-c_i(x_k)}\right)\cdot I + \sum_{i=1}^m\frac{\nabla c_i(x_k)\nabla c_i(x_k)^\top}{c_i^2(x_k)}\prec+\infty$$ is a finite matrix when $\cA(x^*)=\emptyset$, there exists some constant $C_1$ s.t. $\|H_k\|\leq C_1$ for all $k\geq k_1$. Therefore, we must have 
		$1 = p_k^\top H_k p_k \leq C_1\|p_k\|^2$ for all $k\geq k_1$. That is $\|p_k\|^2 \geq C_1^{-1}$ for all $k\geq k_1$. By \eqref{eqn:summable}, we have 
		\begin{equation}
			\label{thm:AFS-4}
			\sum_{k=0}^{\infty}\alpha_k\|p_k\|^2 \geq  \sum_{k=k_1}^{\infty}\alpha_k\|p_k\|^2 \geq \sum_{k=k_1}^{\infty}\alpha_k/C_1 = +\infty,
		\end{equation}
		contradicting the summability result \eqref{thm:AFS-1}. 
		
		\textbf{Branch 6.} Due to the above discussion, we must have $\big\|p_{k}+\frac{1}{L}\nabla f(x^k)\big\|^2\to\big\|p_{*}+\frac{1}{L}\nabla f(x^*)\big\|^2=0$. 
		Consequently,
		$$\lim_{k\to\infty} \big\|p_{k}+\frac{1}{L}\nabla f(x^*)\big\| \leq \lim_{k\to\infty}\Big(\|p_{k}+\frac{1}{L}\nabla f(x^k)\| + \|\nabla f(x_k) - \nabla f(x^*)\|\Big)=0,$$ 
		indicating that
		$\lim_{k\to\infty}  p_{k} = -\frac{1}{L}\nabla f(x^*).$
		If $\nabla f(x^*)\neq0$, there exists some $k_2\geq0$ and $C_2>0$ such that $\|p_k\|\geq C_2$ for all $k\geq k_2$. Similar to the argument of \eqref{thm:AFS-4}, this again yields a contradiction with the summability result \eqref{thm:AFS-2}. That is $\|p_k\|\to0$ as $k\to\infty$, and $\nabla f(x^*)=0$. Then this theorem holds trivially by setting $\lambda^*=0$.

		\textbf{Branch 4.} Suppose $\cA(x^*) \neq \emptyset$.  By \eqref{defn:HessBarier}, $c_i(x_k)\to 0$ for $i\in\cA(x^*)$ indicates that 
		$$H_k\succ\left(\sum_{i=1}^m\frac{L_i}{-c_i(x_k)}\right)\cdot I \to \infty.$$
		Hence the constraint $p_k^\top H_k p_k\leq 1$ indicates $\|p_k\|^2 \leq {1}/{\sum_{i=1}^m\frac{L_i}{-c_i(x_k)}}\to0$. 
		
		\textbf{Branch 7.} Now we can continue the discussion conditioning on $\cA(x^*)\neq\emptyset$ and $\lim_{k\to\infty}\|p_k\|=0$. Next, we would like to discuss based on whether there is an infinite subsequence of subproblems where the constraints $p\in\cE_k$ are inactive.  
		
		\textbf{Branch 8.} Suppose there is an infinite subsequence $\KK_1\subseteq\mathbb{Z}_+$ such that $p\in\cE_k$ is inactive. That is, $p_k^\top H_k p_k <1$ for all $k\in\KK_1$. Combined with the complementary slackness condition in \eqref{thm:AFS-0} indicates that $\xi_k = 0$ for $k\in\KK_1$. Consequently, we have 
		$\nabla f(x_k) + Lp_k = 0$ for $\forall k\in\KK_1$. As $x_k\to x^*$ and $p_k\to0$ as $k\to\infty$, we have 
		$$\nabla f(x^*) = \lim_{k\to\infty}\nabla f(x_k) = \lim_{k\in\KK_1}\nabla f(x_k) = \lim_{k\in\KK_1} -Lp_k = 0.$$
		We again have $\nabla f(x^*)=0$ in this case and the theorem holds by setting $\lambda^*=0$. 
		
		Therefore, without loss of generality, we can further assume that there exists some $k_3\geq0$ such that the subproblem constraints $p\in\cE_k$ are active for all $k\geq k_3$. Namely, we have 
		\begin{equation}
			\label{thm:AFS-5}
			p_k^\top H_k p_k = 1 \qquad\mbox{for}\qquad \forall k\geq k_3.
		\end{equation}
		Substituting \eqref{thm:AFS-5} to \eqref{thm:AFS-1} gives
		\begin{equation}
			\label{thm:AFS-6}
			\xi_k = - \big(\nabla f(x_k) + L p_k \big)^\top p_k \qquad\mbox{for}\qquad \forall k\geq k_3.
		\end{equation}
		Denote the unit update direction vector in the iteration $k$ as 
		$$\bar{p}_k:=p_k/\|p_k\|.$$
		Then substituting \eqref{thm:AFS-6} and \eqref{defn:HessBarier} to \eqref{thm:AFS-0} and rearranging the terms gives
		\begin{equation}
			\label{thm:AFS-7}
			\nabla f(x_k)  + \sum_{i=1}^m \lambda_{k,i}\cdot\nabla c_i(x_k) + R_k\cdot\bar{p}_k + Lp_k = 0
		\end{equation}
		where the approximate multiplier $\lambda_{k,i}$ and the error term $R_k$ are given by 
		\begin{eqnarray}
			\label{thm:AFS-8}
			\lambda_{k,i} &:= & - \big(\nabla f(x_k) + L p_k \big)^\top \bar{p}_k\cdot \frac{\|p_k\|^2}{c_i^2(x_k)}\cdot\bar{p}_k^\top\nabla c_i(x_k), \qquad i = 1,2,\cdots,m,\\
			R_k & := &  - \big(\nabla f(x_k) + L p_k \big)^\top \bar{p}_k\cdot \left(\sum_{i=1}^m\frac{L_i\|p_k\|^2}{-c_i(x_k)}\right). 
		\end{eqnarray}
		Clearly, \eqref{thm:AFS-1} indicates that $R_k\geq0$. 
		As $\|p_k\|\to0$, and $x_k\to x^*$, there exists constant $C_3>0$ such that $\big\|\nabla f(x_k) + L p_k\big\|\leq C_3$ and $\|\nabla c_i(x_k)\|\leq C_3$ for all $k\geq0$. Therefore, for all $i\notin\cA(x^*)$ s.t. $c_i(x^*)\neq0$, we have 
		\begin{equation}
			\label{thm:AFS-9}
			\lim_{k\to\infty}|\lambda_{k,i}| \leq C_3^2\cdot\lim_{k\to\infty} \frac{\|p_k\|^2}{c_i^2(x_k)} = \frac{C_3^2}{c_i^2(x^*)}\cdot\lim_{k\to\infty}\|p_k\|^2 =0\quad\mbox{for}\quad i\notin\cA(x^*).
		\end{equation}
		Intuitively, as the term $Lp_k\to0$, if we  further have $R_k\to0$, and $\lambda_{k,i}\to\lambda^*_i$ for $i\in\cA(x^*)$, then the theorem is proved. However, this proof is very tricky. To establish the theorem, we consider the following two cases. 
		
		\textbf{Branch 9.} First, let us consider the following situation where
		\begin{equation}
			\label{thm:AFS-fake}
			\liminf_{k\to\infty} R_k>0.
		\end{equation} 
		We aim to show by contradiction that this case cannot happen. In this case, there exist a constant $C_4>0$ and some $k_4\geq k_3$ (see \eqref{thm:AFS-5} for definition of $k_3$) such that $R_k\geq C_4$ for all $k\geq k_4$. Note that the feasibility $p_k\in\cE_k$ indicates that
		$$\sum_{i=1}^m\frac{L_i\|p_k\|^2}{-c_i(x_k)}\leq p_k^\top H_kp_k\leq 1\quad\mbox{for}\quad\forall k\geq0,$$
		this immediately indicates 
		$R_k\leq C_3$ for $k\geq k_4$. In addition, we further have for all $k\geq k_4$ that 
		\begin{equation}
			\label{thm:AFS-10}
			\begin{cases}
				-(\nabla f(x_k) + L p_k)^\top \bar{p}_k = \frac{R_k}{\sum_{i=1}^m\frac{L_i\|p_k\|^2}{-c_i(x_k)}} \geq C_4,\vspace{0.1cm}\\
				\sum_{i=1}^m\frac{L_i\|p_k\|^2}{-c_i(x_k)} = \frac{R_k} {-(\nabla f(x_k) + L p_k )^\top \bar{p}_k}  \geq {C_4}/{C_3}.
			\end{cases}
		\end{equation} 
		Substituting the first inequality of \eqref{thm:AFS-10} to the summability bound \eqref{thm:AFS-2} gives 
		$$\frac{2\Delta_f}{L} \geq \sum_{k=0}^{\infty} -\alpha_k\nabla f(x_k)^\top p_k \geq  \sum_{k=k_4}^{\infty} -\nabla f(x_k)^\top \bar{p}_k\cdot \alpha_k\|p_k\|\geq C_4\sum_{k=k_4}^{\infty}\alpha_k\|p_k\|.$$
		This argument provides us a much stronger summability bound:
		\begin{equation}
			\label{thm:AFS-11}
			\sum_{k=k_4}^{\infty}\alpha_k\|p_k\| \leq \frac{2\Delta_f}{LC_4}.
		\end{equation}
		On the other hand, by LICQ, $\{\nabla c_i(x^*)\}_{i\in\cA(x^*)}$ are linearly independent. As $x_k\to x^*$, we know there exists $k_5\geq k_4$ such that $\{\nabla c_i(x_k)\}_{i\in\cA(x^*)}$ are linearly independent for all $k\geq k_5$. Define 
		$$\begin{cases}
			J_{k,A}:=\begin{bmatrix}
				\nabla c_i(x_k)^\top
			\end{bmatrix}_{i\in\cA(x^*)}\\
			J_{k,N}:=\begin{bmatrix}
				\nabla c_i(x_k)^\top
			\end{bmatrix}_{i\notin\cA(x^*)}
		\end{cases}\qquad\mbox{and}\qquad\quad
		\begin{cases}
			\lambda_{k,A}:=\begin{bmatrix}
				\lambda_{k,i}
			\end{bmatrix}_{i\in\cA(x^*)}\\
			\lambda_{k,N}:=\begin{bmatrix}
				\lambda_{k,i}
			\end{bmatrix}_{i\notin\cA(x^*)}
		\end{cases}$$
		Similarly, we define the notations $J_{*,A}$, $J_{*,N}$ by replacing $x_k$ with $x^*$ in  the above definition.   Then \eqref{thm:AFS-7} indicates for $k\geq k_5$ that   
		\begin{equation}
			\label{thm:AFS-lamA}
			\lambda_{k,A} = -\left[J_{k,A}J_{k,A}^\top\right]^{-1}\!\!J_{k,A}\left(\nabla f(x_k) +    J_{k,N}^\top\lambda_{k,N} + R_k\cdot\bar{p}_k + Lp_k\right).
		\end{equation}
		Note that in the above equation, all the term on the right hand side are bounded for all $k\geq k_5$:
		$$J_{k,N}^\top\lambda_{k,N}\to0,\qquad\,\|\nabla f(x_k) + Lp_k\|\leq C_3,\qquad \qquad \qquad \!$$ 
		$$\|R_k\cdot\bar{p}_k\| \leq C_3, \qquad \left[J_{k,A}J_{k,A}^\top\right]^{-1}\!\!J_{k,A}\to \left[J_{*,A}J_{*,A}^\top\right]^{-1}\!\!J_{*,A}.$$
		Therefore, $\lambda_{k,A}$ must be bounded for all $k$. Therefore, we can suppose there exists $\Lambda >0$ such that 
		\begin{equation}
			\label{thm:AFS-12}
			|\lambda_{k,i}|\leq \Lambda\quad\mbox{for}\quad\forall k\geq0, \forall i\in\cA(x^*).
		\end{equation}
		Now we finish the preparation and can start proving the contradiction by showing that $\cA(x^*)=\emptyset$. By the $L_i$-smoothness of $c_i(\cdot)$, we have the following bounds for the margin of constraints:
		\begin{equation}
			\label{thm:AFS-13}
			-c_i(x_{k+1})\geq -c_i(x_k) - \alpha_k\nabla c_i(x_k)^\top p_k - \frac{L_i\alpha^2_k}{2}\|p_k\|^2.
		\end{equation}
		Based on this, we derive two-types of descent for $-c_i(\cdot)$. First, the feasibility of $p_k$ gives
		$$\sum_{i=1}^m\frac{L_i\|p_k\|^2}{-c_i(x_k)} + \sum_{i=1}^m\frac{(\nabla p_k^\top c_i(x_k))^2}{c_i^2(x_k)}\leq 1.$$
		That is, $\frac{L_i}{2}\|p_k\|^2\leq -c_i(x_k)$ and $|\nabla c_i(x_k)^\top p_k|\leq -c_i(x_k)$ for all $k\geq 0$ and all $i=1,2,\cdots,m.$ Substituting them to \eqref{thm:AFS-13} gives the type-1 descent inequality:
		\begin{equation}
			\label{thm:AFS-14}
			-c_i(x_{k+1})\geq -c_i(x_k)\left(1-\alpha_k-\frac{\alpha_k^2}{2}\right).
		\end{equation}
		On the other hand, the upper bound \eqref{thm:AFS-12} and the definition \eqref{thm:AFS-8} of $\lambda_{k,i}$ indicates that 
		\begin{eqnarray*}
			\frac{\big|\bar{p}_k^\top\nabla c_i(x_k)\big|}{-c_i(x_k)} & = & \frac{|\lambda_{k,i}|}{\big|\big(\nabla f(x_k) + L p_k \big)^\top \bar{p}_k\big| \cdot \frac{\|p_k\|^2}{-c_i(x_k)}}\\
			& \leq & \frac{\Lambda}{C_4 \cdot \frac{\|p_k\|^2}{-c_i(x_k)}}\\
			& = & \frac{\Lambda}{C_4 \cdot \frac{\|p_k\|^2}{c_{\min}(x_k)}}\cdot\frac{-c_i(x_k)}{c_{\min}(x_k)}
		\end{eqnarray*} 
		where $c_{\min}(x_k)$ denotes the minimum constraint margin at the $k$-th iteration:
		$$c_{\min}(x_k):=\min\{-c_1(x_k),-c_2(x_k),\cdots,-c_m(x_k)\}.$$
		Denoting $L_{\max} :=\max\{L_1,L_2,\cdots,L_m\}$, inequality \eqref{thm:AFS-10} indicates that 
		$$ \frac{\|p_k\|^2}{c_{\min}(x_k)} \geq \frac{1}{m L_{\max}}\sum_{i=1}^m\frac{L_i\|p_k\|^2}{-c_i(x_k)}   \geq \frac{C_4/C_3}{mL_{\max}}.$$ 
		Substituting the above two bounds to \eqref{thm:AFS-13} gives the type-2 descent inequality:
		\begin{equation}
			\label{thm:AFS-15}
			-c_i(x_{k+1})\geq -c_i(x_k)\left(1-\frac{m\Lambda  L_{\max}}{C_4^2/C_3}\cdot\frac{-c_i(x_k)}{c_{\min}(x_k)}\cdot\alpha_k\|p_k\|-\frac{\alpha_k^2}{2}\right).
		\end{equation}
		Then for any $k\geq k_5$, let us fix an arbitrary constant greater than 1, say 2. Then we define an index set $\mathcal{I}_{k}$ as 
		$$\mathcal{I}_{k}:=\left\{1\le i\le m: \frac{-c_i(x_k)}{c_{\min}(x_k)}\leq 2\right\}.$$
		Then for any $i\in\mathcal{I}_k$, we apply the type-2 descent inequality \eqref{thm:AFS-15} to yield
		\begin{eqnarray}
			\label{thm:AFS-16}
			-c_i(x_{k+1}) &\geq& -c_i(x_k)\left(1-\frac{2m\Lambda  L_{\max}}{C_4^2/C_3}\cdot\alpha_k\|p_k\|-\frac{\alpha_k^2}{2}\right) \\
			& \geq & c_{\min}(x_k)\left(1-\frac{2m\Lambda  L_{\max}}{C_4^2/C_3}\cdot\alpha_k\|p_k\|-\frac{\alpha_k^2}{2}\right)\quad\mbox{for}\quad k\geq k_5, i\in\mathcal{I}_k.\nonumber
		\end{eqnarray}
		Note that $\alpha_k\to0$ and $\|p_k\|\to0$, we assume w.l.o.g. that the above right hand side factor lies in $(0,1)$. Otherwise, we only need to increase $k_5$ until it is large enough.  For any $i\notin\mathcal{I}_k$, we can apply the type-1 descent inequality \eqref{thm:AFS-14} to yield
		\begin{eqnarray*}
			-c_i(x_{k+1})  \geq  -c_i(x_k)\left(1-\alpha_k-\frac{\alpha_k^2}{2}\right) \geq  c_{\min}(x_k)\cdot 2\left(1-\alpha_k-\frac{\alpha_k^2}{2}\right)
		\end{eqnarray*}
		Note that $\alpha_k\to0$, we can assume w.l.o.g. that 
		$2(1-\alpha_k-\alpha_k^2)\geq 1$ for all $k\geq k_5$. Otherwise it is again sufficient to increase $k_5$ until this inequality holds. Therefore, we have 
		\begin{eqnarray}
			\label{thm:AFS-17}
			-c_i(x_{k+1})  \geq   c_{\min}(x_k) \quad\mbox{for}\quad k\geq k_5, i\notin\mathcal{I}_k.\nonumber
		\end{eqnarray}
		Combining \eqref{thm:AFS-16} and \eqref{thm:AFS-17} gives 
		\begin{equation}
			\label{thm:AFS-18}
			c_{\min}(x_{k+1}) \geq c_{\min}(x_k)\left(1-\frac{2m\Lambda  L_{\max}}{C_4^2/C_3}\cdot\alpha_k\|p_k\|-\frac{\alpha_k^2}{2}\right).
		\end{equation}
		Consequently, we can lower bound $c_{\min}(x^*)$ by the following infinite product 
		$$c_{\min}(x^*)\geq c_{\min}(x_{k_5})\cdot\prod_{k=k_5}^{\infty}\left(1-\frac{2m\Lambda  L_{\max}}{C_4^2/C_3}\cdot\alpha_k\|p_k\|-\frac{\alpha_k^2}{2}\right).$$
		For an infinite product $\prod_{k=0}^{\infty}(1-\beta_k)$ with $\beta_k\in(0,1)$, it is well-known that
		$$\prod_{i=1}^{\infty}(1-\beta_i)>0\qquad \Longleftrightarrow \qquad \sum_{i=1}^{\infty}\beta_i<+\infty.$$
		With this fact, combining \eqref{eqn:summable} and \eqref{thm:AFS-11} gives 
		$$\sum_{k=k_5}^{\infty}\frac{2m\Lambda  L_{\max}}{C_4^2/C_3}\cdot\alpha_k\|p_k\|+\frac{\alpha_k^2}{2} \leq \frac{4m\Lambda  L_{\max}\Delta_f}{LC_4^3/C_3} +\sum_{k=0}^\infty\alpha_k^2<+\infty,$$
		indicating that $c_{\min}(x^*)>0$, which contradicts the basic assumption of this branch that $\cA(x^*)\neq\emptyset$. 
		
		\textbf{Branch 10.} Therefore, \eqref{thm:AFS-fake} will not hold and we must have 
		\begin{equation}
			\label{thm:AFS-19}
			\liminf_{k\to\infty} R_k = 0.
		\end{equation}
		In this case, there exists a subsequence $\KK_2\subseteq\mathbb{Z}_+$ such that $R_k\to0$, $k\in\KK_2$. In this case, we recall \eqref{thm:AFS-lamA} and \eqref{thm:AFS-9} to obtain that 
		\begin{eqnarray}
			\lim_{k\in\KK_2}\lambda_{k,A} &=& -\lim_{k\in\KK_2}\left[J_{k,A}J_{k,A}^\top\right]^{-1}\!\!J_{k,A}\left(\nabla f(x_k) +    J_{k,N}^\top\lambda_{k,N} + R_k\cdot\bar{p}_k + Lp_k\right)\\
			& = & -\left[J_{*,A}J_{*,A}^\top\right]^{-1}\!\!J_{*,A}\nabla f(x_*).
		\end{eqnarray} 
		Define this vector as $\lambda^*_A = [\lambda^*_i]_{i\in\cA(x^*)}$, then we have 
		$\lambda_{A}^*:= -\left[J_{*,A}J_{*,A}^\top\right]^{-1}\!\!J_{*,A}\nabla f(x_*)$. Note that $J_{*,A}^\top\left[J_{*,A}J_{*,A}^\top\right]^{-1}\!\!J_{*,A} = I$, we obtain the following relationship
		$$0 = \nabla f(x^*) + J_{*,A}^\top\lambda_{A}^* = \nabla f(x^*) +\sum_{i\in\cA(x^*)} \lambda_i^*\nabla c_i(x^*).$$
		For $i\notin\cA(x^*)$, it is sufficient to set $\lambda^*_i=0$, which corresponds to the observation \eqref{thm:AFS-9}. This completes the proof of the theorem. 
	\end{proof}

%%===========================================================================================%%
%% If you are submitting to one of the Nature Portfolio journals, using the eJP submission   %%
%% system, please include the references within the manuscript file itself. You may do this  %%
%% by copying the reference list from your .bbl file, paste it into the main manuscript .tex %%
%% file, and delete the associated \verb+\bibliography+ commands.                            %%
%%===========================================================================================%%

\end{document}